\newtheorem{thm}{Theorem}
\newtheorem{lemma}{Lemma}
\newtheorem{corollary}{Corollary}
\theoremstyle{definition}
\newtheorem{definition}{Definition}
\newtheorem{remark}{Remark}
\newtheorem{proposition}{Proposition}
\newcommand{\ddbar}{\partial\bar\partial}
\newcommand{\dbar}{\bar\partial}
\newcommand{\1}{\mathds{1}}
\newcommand{\dist}{\operatorname{dist}}
\newcommand{\lipone}{\operatorname{Lip}_{1,1}}
\newcommand{\eps}{\varepsilon}
\newcommand\dotprod[2]{\langle #1 , #2 \rangle}
\newcommand{\Lcal}{L}
\newcommand{\HoL}{H^0 (L)}
\newcommand{\HoLk}{H^0 (L^k)}
\newcommand{\Fcal}{\mathcal F}
\newcommand{\Cbb}{{\mathbb{C}}}
\newcommand{\Rbb}{{\mathbb{R}}}
\newcommand{\CPn}{{\mathbb{CP}^n}}
\newcommand{\Ocal}{{\mathcal{O}}}
\title{Equidistribution estimates for Fekete points on complex manifolds}
\author{Nir Lev}
\address{Centre de Recerca Matem\`atica, Campus de Bellaterra, Edifici C, 08193 Bellaterra (Barcelona), Spain}
\address{Department of Mathematics, Bar-Ilan University, Ramat-Gan 5290002, Israel}
\email{\href{mailto:levnir@math.biu.ac.il}{\texttt{levnir@math.biu.ac.il}}}
\author{Joaquim Ortega-Cerd\`a}
\address{Dept.\ Matem\`atica Aplicada i An\`alisi,
 Universitat  de Barcelona,
Gran Via 585, 08007 Bar\-ce\-lo\-na, Spain}
\email{\href{mailto:jortega@ub.edu}{\texttt{jortega@ub.edu}}}
\thanks{Supported by the project MTM2011-27932-C02-01 and the CIRIT grant
2009SGR-1303}
\date{\today}
\subjclass[2000]{}
\keywords{Beurling-Landau density, Fekete
points, Holomorphic line bundles}
\begin{document}
\begin{abstract}
We study the equidistribution of Fekete points in a compact complex manifold.
These are extremal point configurations defined through sections of powers of a
positive line bundle. Their equidistribution is a known result. The novelty of
our approach is that we relate them to the problem of sampling  and
interpolation on line bundles, which allows us to estimate the equidistribution
of the Fekete points quantitatively. In particular we estimate the
Kantorovich-Wasserstein distance of the Fekete points to its limiting measure.
The sampling and interpolation arrays on line bundles are a subject of
independent interest, and we provide necessary density conditions through the
classical approach of Landau, that in this context measures the local dimension
of the space of sections of the line bundle. We obtain a complete geometric
characterization of sampling and interpolation arrays in the case of compact
manifolds of dimension one, and we prove that there are no arrays of both
sampling and interpolation in the more general setting of semipositive line
bundles.
\end{abstract}
\maketitle

\section{Introduction}

\subsection{}
Let $\Lcal$ be a holomorphic line bundle on a compact complex manifold $X$ of
dimension $n$. The space of global holomorphic sections to $\Lcal$ is
denoted by $\HoL$.
If $s_1, \ldots, s_N$ is a basis for $\HoL$ and $x_1,\ldots, x_N$
are $N$ points in $X$, then the Vandermonde-type determinant
\begin{equation*}
\det (s_i (x_j)), \quad 1\leq i,j\leq N,
\end{equation*}
is a section of the pulled-back line bundle $\Lcal^{\boxtimes N}$ over the
manifold $X^N$.
If $\Lcal$ is endowed with a smooth hermitian metric $\phi$, then 
it also induces a natural metric on $\Lcal^{\boxtimes N}$.

A configuration of $N$ points $x_1,\ldots, x_N$ in $X$ is called a \emph{Fekete
configuration} for $(\Lcal, \phi)$ if it maximizes the pointwise norm 
$| \det (s_i (x_j)) |_\phi$. 
It is easy to check that the definition of a Fekete configuration does not
depend on the particular choice of the basis $s_1, \ldots,s_N$ for $\HoL$.
The compactness of $X$ ensures the existence of
Fekete configurations (but in general there need not be a unique one). 

It is interesting to study the distribution of Fekete points with respect to high powers
$L^k$ of the line bundle $\Lcal$, where $L^k$ is endowed with the product metric
$k \phi$. The model example is the complex projective space $X = \CPn$ with
the hyperplane bundle $L = \Ocal(1)$, endowed with the Fubini-Study metric.
The $k$'th power of $L$ is denoted $\Ocal(k)$, and the holomorphic sections
to $\Ocal(k)$ can be identified with the homogeneous polynomials of degree $k$
in $n+1$ variables.
This is in fact the prime example, and it covers in particular the
classical theory of weighted orthogonal multivariate polynomials.

For each $k=1,2,3,\ldots$ let $\Fcal_k$ be a Fekete configuration
for $(\Lcal^k, k \phi)$.
The goal is to provide information on the distribution of Fekete points
$\Fcal_k$ in geometrical terms of the line bundle $(L, \phi)$,
showing that they are ``equidistributed'' on $X$.
We will consider the case when $L$ is an ample
line bundle with a smooth positive metric $\phi$. 
The problem has already been solved by Berman, Boucksom and Witt Nystr{\"o}m
\cite{BerBouWit11} in an even more general context, when $L$ is a big
line bundle with an arbitrary continuous metric on a compact subset $K\subset X$. The
redeeming feature of our approach is that our new proof provides a
quantitative version of the equidistribution.

\begin{thm}
\label{fekete_distribution}
If the line bundle $(L,\phi)$ is positive then
\begin{equation*}
\frac{\# (\Fcal_k \cap B(x,r))}{\# \Fcal_k}=
\Big ( 1+O\Big ( \big (r\sqrt{k}\,\big )^{-1}\Big )\Big ) \,
\frac{\int_{B(x,r)} (i \partial \bar\partial \phi)^n}{\int_X 
(i \partial \bar\partial \phi)^n} 
\end{equation*}
for every $r > 0$, uniformly in $x \in X$.
\end{thm}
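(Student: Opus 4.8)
The plan is to deduce the quantitative equidistribution from the fact that a Fekete array behaves, to leading order, like a near-critical sampling array for the spaces $\HoLk$, together with the quantitative Beurling--Landau density estimates. Set $N_k=\dim\HoLk$ (so $N_k\asymp k^n$), let $\nu_k=N_k^{-1}\sum_{x\in\Fcal_k}\delta_x$ be the normalized counting measure of the Fekete configuration, and write $\mu_{\mathrm{eq}}=(i\ddbar\phi)^n\big/\!\int_X(i\ddbar\phi)^n$; the assertion is that $\nu_k(B(x,r))=(1+O((r\sqrt k)^{-1}))\,\mu_{\mathrm{eq}}(B(x,r))$ uniformly in $x$. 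The shape of the error already reveals the mechanism: the Bergman kernel of $(\Lcal^k,k\phi)$ is concentrated at the scale $1/\sqrt k$, the Fekete points are equidistributed \emph{at that scale}, and the only loss when one counts them inside a fixed ball $B(x,r)$ comes from a boundary layer of width $\asymp 1/\sqrt k$ around $\partial B(x,r)$, whose $\mu_{\mathrm{eq}}$-mass is $O(r^{2n-1}/\sqrt k)$, whereas $\mu_{\mathrm{eq}}(B(x,r))\asymp r^{2n}$ (the density $(i\ddbar\phi)^n$ being bounded above and below on the compact $X$), for a relative error of order $1/(r\sqrt k)$.

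The first step records the consequences of Fekete extremality. Writing $\Fcal_k=\{x_1,\dots,x_{N_k}\}$ and letting $\ell_j$ be the Lagrange section of $\Lcal^k$ with $\ell_j(x_i)=\delta_{ij}$, Cramer's rule identifies $|\ell_j(y)|_{k\phi}$ with a ratio of Vandermonde norms in which $y$ replaces $x_j$ in the numerator, so the very definition of a Fekete configuration forces $\sup_X|\ell_j|_{k\phi}\le 1$. Two things follow. First, a \emph{separation} estimate: $\dist(x_i,x_j)\gtrsim 1/\sqrt k$ for $i\ne j$, since a section of norm at most $1$ cannot carry the values $1$ and $0$ at two points much closer than the Bergman scale. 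Secondly, combining this with the Gaussian off-diagonal decay of the Bergman kernel at scale $1/\sqrt k$, one shows that $\{\Fcal_k\}$ is a near-critical sampling array: the Bessel bound $\sum_j|s(x_j)|_{k\phi}^2/K_k(x_j)\lesssim\|s\|_{L^2}^2$ holds for any separated array (where $K_k$ is the Bergman function of $\HoLk$), while the reverse sampling inequality $\sum_j|s(x_j)|_{k\phi}^2/K_k(x_j)\gtrsim\|s\|_{L^2}^2$ for $s\in\HoLk$ follows from the extremality --- with both constants tending to $1$ at the rate $O(k^{-1/2})$. One can argue the sampling bound by contradiction: a section of unit norm nearly vanishing on $\Fcal_k$ could be used to perturb the configuration so as to increase the Vandermonde norm, contradicting maximality.

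With this in hand I would invoke the density estimates established in the sequel through the classical approach of Landau: a sampling array has lower Beurling--Landau density at least $1$ in every ball, the density being measured against $\mu_{\mathrm{eq}}$ --- which is precisely the measure recording the local dimension of $\HoLk$ at the scale $\sqrt k$, in that $K_k$ is asymptotic to $N_k$ times the density of $\mu_{\mathrm{eq}}$. The point is that the underlying concentration-operator argument is local and quantitative: applied to the Toeplitz operator that restricts $\HoLk$ to $B(x,r)$, it shows that the number of its eigenvalues close to $1$ is $N_k\,\mu_{\mathrm{eq}}(B(x,r)) + O(k^n r^{2n-1}/\sqrt k)$, the remainder being exactly the boundary-layer count; the near-critical sampling inequality then yields $\#(\Fcal_k\cap B(x,r))\ge N_k\,\mu_{\mathrm{eq}}(B(x,r)) - O(k^n r^{2n-1}/\sqrt k)$, uniformly in $x$. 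The matching upper bound follows from the identity $\#\Fcal_k=N_k$ together with these lower bounds applied on the complement of $B(x,r)$ --- decomposed into balls whose radii grow with the distance to $B(x,r)$, so that the accumulated errors still sum to $O(k^n r^{2n-1}/\sqrt k)$ --- or, alternatively, directly from the interpolation side of the density estimates. Dividing by $\#\Fcal_k=N_k$ gives the theorem, and uniformity in $x$ is inherited from that of the Bergman asymptotics and of the eigenvalue count on the compact $X$.

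The crux is the second step: turning the soft pointwise Fekete property $\sup_X|\ell_j|_{k\phi}\le 1$ into the $L^2$ sampling bound with the explicit rate $O(k^{-1/2})$ and the sharp constant $1$. This requires precise off-diagonal Bergman kernel asymptotics --- Gaussian (Agmon-type) decay at the scale $1/\sqrt k$, with all constants uniform on $X$ --- and a careful covering and perturbation argument at a configuration that is, by the very conclusion being proved, critically packed. A second, more technical, difficulty is to make the Landau density argument genuinely local and quantitative, so that it returns the count in a fixed ball with error exactly the boundary-layer term rather than merely the weak-$\ast$ convergence $\nu_k\rightharpoonup\mu_{\mathrm{eq}}$; this is where the classical potential-theoretic proof of equidistribution must be upgraded to a sharper operator-theoretic one.
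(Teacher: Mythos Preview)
Your route diverges from the paper's and has a real gap at its center. You assert that the Fekete configuration $\Fcal_k$ is a sampling set for $\HoLk$ with both constants equal to $1+O(k^{-1/2})$, and that this ``follows from the extremality'' via a contradiction argument. But a contradiction argument is soft and cannot produce a rate; and in fact the paper never establishes any sampling inequality for $\Fcal_k$ at level $k$ --- nor should you expect one, since the Fekete array sits exactly at the critical density where uniform sampling bounds typically break down. What the paper does prove (Lemma~\ref{LemFekSamInt}) is that the \emph{perturbed} array $\Fcal_{(1+\eps)k}$ is sampling at level $k$, but with upper sampling constant $B\lesssim\eps^{-2n}$. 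If you feed that into the Landau estimate (Lemma~\ref{lem_dens_necc}), the error constant $M$ inherits the factor $\eps^{-2n}$; you also pick up an $O(\eps)$ discrepancy from working at level $(1-\eps')k$ instead of $k$. Balancing $O(\eps)$ against $O(\eps^{-2n}R^{-1})$ gives a relative error $O(R^{-1/(2n+1)})$ with $R=r\sqrt{k}$, strictly worse than the claimed $O(R^{-1})$. So the detour through sampling/interpolation and the concentration operator, as you have sketched it, does not reach the theorem.

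The paper's proof avoids this entirely. It is a direct Nitzan--Olevskii-type argument: one constructs sections $\Phi_\lambda\in\HoLk$ (Lemma~\ref{lem:dual}) with $|\Phi_\lambda(x)|=|\Pi_k(x,\lambda)|$, $\int_X\langle\ell_\lambda,\Phi_\lambda\rangle=1$, and $\sum_{\lambda\in\Fcal_k}\langle\ell_\lambda(x),\Phi_\lambda(x)\rangle=|\Pi_k(x,x)|$. These two identities give, purely algebraically,
\[
\#(\Fcal_k\cap\Omega)-\int_\Omega|\Pi_k(x,x)|
=\int_{\Omega^c}\sum_{\lambda\in\Fcal_k\cap\Omega}\langle\ell_\lambda,\Phi_\lambda\rangle
-\int_{\Omega}\sum_{\lambda\in\Fcal_k\cap\Omega^c}\langle\ell_\lambda,\Phi_\lambda\rangle,
\]
and the two boundary terms are bounded by $O(R^{2n-1})$ using only $|\ell_\lambda|\le 1$, the separation of $\Fcal_k$, and an $L^1$ off-diagonal estimate $k^n\iint_{\Omega\times\Omega^c}|\Pi_k|\lesssim R^{2n-1}$ (Lemma~\ref{decay}). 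No sampling inequality, no eigenvalue count, no optimization over $\eps$ is needed; this is why the error comes out as a clean $O(R^{-1})$. The Landau machinery in Sections~\ref{Landconc}--\ref{curvdens} is used for Theorem~\ref{necessary_conditions}, not for Theorem~\ref{fekete_distribution}.
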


Here $\partial \bar\partial \phi$ is the \emph{curvature form} of the
metric $\phi$, which is a globally defined $(1,1)$-form on $X$, 
and $(i \partial \bar\partial \phi)^n$ is the corresponding
volume form on $X$. 
By $B(x,r)$ we mean the ball of radius $r$ centered at the point $x$ in $X$.
To define the balls we endow the manifold $X$ with an arbitrary
hermitian metric, and use the associated distance function.

The result shows, in particular, that the weak limit as $k \to \infty$ of the
probability measures
\begin{equation}\label{Feketemeasure}
\mu_k = \frac{1}{\# \Fcal_k}\sum_{\lambda \in \Fcal_k} \delta_\lambda
\end{equation}
is the measure 
$(i \partial \bar\partial \phi)^n$ divided by its total mass. This is a
special case of the main theorem proved in \cite{BerBouWit11} in the setting
of positive line bundles.

Theorem~\ref{fekete_distribution} provides an even more precise result
quantifying the convergence. It measures the discrepancy between the Fekete 
points and its limit measure. This has been done in the one dimensional 
setting, see for instance \cite[Theorem 3]{RotSer13}. 

We can also estimate the
distance of the Fekete measure $\mu_k$ to its limit 
$\nu$ in the Kantorovich-Wasserstein metric $W$, which 
metrizes the weak convergence of measures (see Section \ref{sec_wasserstein}). 
This is another measure of how close the Fekete points are to its limit.

\begin{thm}\label{wass_estimate}
If the line bundle $(L,\phi)$ is positive then
\[
\frac1{\sqrt k} \lesssim W(\mu_k,\nu)\lesssim \frac1{\sqrt k}
\] as $k \to \infty$.
\end{thm}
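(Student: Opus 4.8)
The plan is to prove the two inequalities by quite different means: the lower bound is a soft consequence of the atomic nature of $\mu_k$, while the upper bound uses the quantitative equidistribution of Theorem~\ref{fekete_distribution}, pushed one order further.

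\emph{Lower bound.} First I would observe that in any coupling $\pi$ of $\mu_k$ and $\nu$ the mass sitting at a point $x$ must travel to an atom of $\mu_k$, i.e.\ to a point of $\Fcal_k$, whence $\int\dist(x,y)\,d\pi(x,y)\ge\int_X\dist(x,\Fcal_k)\,d\nu(x)$ and so $W(\mu_k,\nu)\ge\int_X\dist(x,\Fcal_k)\,d\nu$. Since $(L,\phi)$ is positive, $\nu=(i\ddbar\phi)^n/\!\int_X(i\ddbar\phi)^n$ is a fixed smooth measure satisfying $\nu(B(x,\rho))\le C\rho^{2n}$, while $\#\Fcal_k=\dim\HoLk\asymp k^n$ by Riemann--Roch. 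Hence the $\rho$-neighbourhood of $\Fcal_k$ has $\nu$-measure at most $C(\#\Fcal_k)\rho^{2n}\le\tfrac12$ once $\rho=c\,k^{-1/2}$ with $c$ small enough, and Markov's inequality gives $\int_X\dist(x,\Fcal_k)\,d\nu\ge\rho/2\gtrsim k^{-1/2}$.

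\emph{Upper bound.} Here I would use the Kantorovich--Rubinstein duality $W(\mu_k,\nu)=\sup\{\int f\,d(\mu_k-\nu):\operatorname{Lip}(f)\le1\}$, normalised so that $\int f\,d\nu=0$, which forces $\|f\|_\infty\lesssim\operatorname{diam}X\lesssim1$. Mollifying $f$ at the natural scale $k^{-1/2}$ (say by running the heat flow on $X$ for time $k^{-1}$) yields $f_k$ with $\|f-f_k\|_\infty\lesssim k^{-1/2}$, $\operatorname{Lip}(f_k)\lesssim1$ and, using the Lipschitz bound on $f$, $\|f_k\|_{C^2}\lesssim k^{1/2}$. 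Because $\|\mu_k-\nu\|_{\mathrm{TV}}\le2$, the coarse part contributes $|\int(f-f_k)\,d(\mu_k-\nu)|\lesssim k^{-1/2}$, so everything reduces to estimating $\int f_k\,d(\mu_k-\nu)$. For that I would establish a $C^2$-sharpening of Theorem~\ref{fekete_distribution}, namely $|\int g\,d\mu_k-\int g\,d\nu|\lesssim k^{-1}\|g\|_{C^2}$ for all $g\in C^2(X)$, by carrying the near-diagonal expansion of the Bergman kernel of $L^k$ and the quadrature attached to the Fekete array---the ingredients already behind Theorem~\ref{fekete_distribution}---one order further. Applied to $g=f_k$ this gives $|\int f_k\,d(\mu_k-\nu)|\lesssim k^{-1}\cdot k^{1/2}=k^{-1/2}$, and taking the supremum over $f$ completes the upper bound.

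\emph{The hard part.} The obstacle is precisely this second-order discrepancy estimate. Theorem~\ref{fekete_distribution} only controls $\mu_k(B(x,\rho))-\nu(B(x,\rho))$ for $\rho\gtrsim k^{-1/2}$, with a bound of size $\lesssim\rho^{2n-1}k^{-1/2}$ that becomes vacuous at exactly the scale $k^{-1/2}$ that governs the transport cost. A purely geometric substitute---matching the masses of $\mu_k$ and $\nu$ dyadic cube by dyadic cube, from the macroscopic scale down to the separation scale $\asymp k^{-1/2}$ of $\Fcal_k$, and transporting cube by cube---costs $\asymp k^{-1/2}$ at each of the $\asymp\log k$ dyadic scales and so only yields $W(\mu_k,\nu)\lesssim k^{-1/2}\log k$. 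Removing this logarithm is the real work: one needs either the $C^2$-discrepancy bound above, or, equivalently, a more careful organisation of the transport that exploits the uniformity in both $x$ and $\rho$ of the discrepancy bound together with the well-separatedness of $\Fcal_k$ and the exact value $\#\Fcal_k=\dim\HoLk$ at the bottom scale.
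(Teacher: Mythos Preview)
Your lower bound is correct and is essentially the paper's argument: the paper uses the dual formulation with the test function $f_k(x)=\dist(x,\Fcal_k)$, which leads to the same integral $\int_X\dist(x,\Fcal_k)\,d\nu$ and the same volume count.

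For the upper bound, however, you have a genuine gap, and you have correctly located it yourself. The $C^2$-discrepancy estimate
\[
\Big|\int g\,d\mu_k-\int g\,d\nu\Big|\lesssim k^{-1}\|g\|_{C^2}
\]
is not something that ``the ingredients already behind Theorem~\ref{fekete_distribution}'' deliver by pushing one order further. The proof of Theorem~\ref{fekete_distribution} does not rest on a near-diagonal expansion plus a quadrature rule; it rests on the identity (Lemma~\ref{lem:dual})
\[
\sum_{\lambda\in\Fcal_k}\langle\ell_\lambda(x),\Phi_\lambda(x)\rangle=|\Pi_k(x,x)|,\qquad \int_X\langle\ell_\lambda,\Phi_\lambda\rangle=1,
\]
and the Lagrange sections $\ell_\lambda$ are not reproducing kernels, so there is no reason for the first-order moment $\int_X(y-\lambda)\langle\ell_\lambda(y),\Phi_\lambda(y)\rangle\,dV(y)$ to vanish. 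Without that cancellation your Taylor argument stalls at order $k^{-1/2}\operatorname{Lip}(g)$, which is circular. Your alternative, the dyadic matching, you already recognise as losing a $\log k$.

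The paper bypasses this obstacle by a different device: instead of testing against functions, it builds an explicit \emph{signed} coupling. One first checks (easily, via the dual formulation) that in the definition of $W$ one may allow complex measures $\rho$ on $X\times X$ with the correct marginals, the cost being $\iint d(x,y)\,|d\rho|$. Then one sets
\[
d\rho(x,y)=\frac1{N_k}\sum_{\lambda\in\Fcal_k}\delta_\lambda(x)\otimes\langle\ell_\lambda(y),\Phi_\lambda(y)\rangle\,dV(y).
\]
Lemma~\ref{lem:dual}(i) gives the first marginal $\mu_k$, Lemma~\ref{lem:dual}(ii) gives the second marginal $\nu_k:=N_k^{-1}|\Pi_k(x,x)|\,dV$. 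Since $|\ell_\lambda|\le1$ and $|\Phi_\lambda(y)|=|\Pi_k(y,\lambda)|$, the cost is at most
\[
\frac1{N_k}\sum_{\lambda}\int_X d(\lambda,y)\,|\Pi_k(y,\lambda)|\,dV(y)\lesssim k^{-1/2}
\]
by the elementary estimate $\int_X d(x,y)|\Pi_k(x,y)|\,dV(y)\lesssim k^{-1/2}$. Finally $W(\nu_k,\nu)\lesssim\|\nu_k-\nu\|\lesssim k^{-1}$ by Tian--Zelditch. No mollification, no second-order discrepancy, and no logarithm.
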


Actually, one can see from the proof of Theorem~\ref{wass_estimate} that the 
lower bound $1/{\sqrt k}\lesssim W(\mu_k,\nu)$ holds for any set of points with the 
same cardinality as the Fekete points. Therefore any family of points, no 
matter how evenly distributed along the manifold with respect to $\nu$ are, 
will converge asymptotically at the same rate as the Fekete points. Thus the 
Fekete points are in a sense optimally distributed, as expected.

The scheme that we propose to study this problem is along the line of research
initiated in \cite{MarOrt10} where the Fekete points are related to another
array of points, the sampling and interpolation points. This has been pursued
further in the one-dimensional setting \cite{AmeOrt12} or even in the real
setting of compact Riemannian manifolds, see \cite{OrtPrid12}.

For each $k=1,2,3,\dots$ let $\Lambda_k$ be a finite set of points in $X$.
We assume that $\{\Lambda_k\}$ is a separated array, which
means that the distance between any two distinct points in $\Lambda_k$
is bounded below by a positive constant times $k^{-1/2}$.
We say that $\{\Lambda_k\}$ is a \emph{sampling array} for
$(L, \phi)$ if there are constants $0<A,B<\infty$ such that, for each
large enough $k$ and any section $s\in \HoLk$ we have
\[
A k^{-n} \sum_{\lambda\in \Lambda_k} |s(\lambda)|^2
\leq \int_X \big | s(x)\big|^2 \leq 
B k^{-n} \sum_{\lambda\in \Lambda_k} |s(\lambda)|^2.
\]

We say that $\{\Lambda_k\}$ is an \emph{interpolation array}
for $(L, \phi)$ if there
is a constant $0<C<\infty$ such that, for each large enough $k$ and any
set of values $\{v_\lambda\}_{\lambda\in \Lambda_k}$, where
each $v_\lambda$ is an element of the fiber of $\lambda$ in $\Lcal^k$,
there is a section $s\in \HoLk$ such that 
$s(\lambda)=v_\lambda$ ($\lambda\in \Lambda_k$) and
\[
\int_X \big | s(x) \big |^2 \leq C k^{-n} \sum_{\lambda
\in\Lambda_k} |v_\lambda|^2.
\]

In order to integrate over $X$ in these definitions
we endow $X$ with an arbitrary volume form. It is easy to see that the
definitions of the sampling and interpolation arrays
do not depend on the particular choice of the volume form on $X$.

Our proof of the equidistribution of Fekete points 
(Theorems \ref{fekete_distribution} and  \ref{wass_estimate} above) is 
inspired by the work of Nitzan and Olevskii
\cite{NitOle12}, where they obtain a new proof of a classical result of Landau
on the distribution of sampling and interpolation points in the Paley-Wiener
space. In some sense, Fekete points are ``almost'' sampling and interpolation
points (see Section \ref{secsampintFek} below).

\subsection{}
We believe that the sampling and interpolation arrays on holomorphic line
bundles are a subject of independent interest, so we proceed to a more detailed
study of them. We can use Landau's classical technique \cite{Landau67} to get
necessary geometric conditions for an array of points to be sampling or
interpolation. We could have used also similar techniques to the ones used by
Nitzan and Olevskii \cite{NitOle12} for this purpose.  We have opted rather for
the analysis of Landau's concentration operator, which measures the local
dimension of the sections of the line bundle, to obtain necessary density
conditions for sampling arrays. This approach was suggested earlier by
Berndtsson \cite{Berndtsson03} and Lindholm \cite{Lindholm01} in the context of
holomorphic line bundles. 

Let $\nu_\Lambda^{-} (R)$ (respectively $\nu_\Lambda^{+} (R)$) denote the
infimum (respectively supremum) of the ratio
\begin{equation}\label{equation11}
\frac{k^{-n} \# (\Lambda_k \cap B(x,r))}{\int_{B(x,r)} 
(i \partial \bar\partial \phi)^n}
\end{equation}
over all $x\in X$, and all $k,r$ such that 
$\frac{R}{\sqrt{k}} \leq r \leq\textrm{diam} (X)$.
As before, to define the balls $B(x,r)$ we have fixed an arbitrary
hermitian metric on the manifold $X$.

\begin{thm}
\label{necessary_conditions}
Let the line bundle $(L,\phi)$ be positive,
and $\Lambda=\{\Lambda_k\}$ be a separated array.
\begin{enumerate}
\item[{\rm (i)}] If $\Lambda$ is a sampling array then
\begin{equation*}
\nu_\Lambda^{-} (R) > \frac{1}{\pi^n n!} - O ( R^{-1} ), \quad R 
\to \infty.
\end{equation*}

\item[{\rm (ii)}] If $\Lambda$ is an interpolation array then
\begin{equation*}
\nu_\Lambda^{+} (R) < \frac{1}{\pi^n n!} + O( R^{-1}), 
\quad R \to \infty.
\end{equation*}
\end{enumerate}
\end{thm}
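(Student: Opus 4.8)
The plan is to follow Landau's classical approach, transplanted to the line-bundle setting, via the analysis of a concentration operator acting on $H^0(L^k)$. The first step is to set up, for each $k$, the natural Hilbert space structure on $H^0(L^k)$ coming from the metric $k\phi$ and the volume form $(i\ddbar\phi)^n$, and to write the Bergman kernel $K_k(x,y)$ for this space. The positivity of $(L,\phi)$ is exactly what gives the off-diagonal decay estimates and the on-diagonal asymptotics $K_k(x,x) \sim k^n/(\pi^n n!)\cdot(\text{density of }(i\ddbar\phi)^n)$ — these are the standard Bergman kernel asymptotics of Tian--Zelditch--Catlin--Bleher--Shiffman--Zelditch, and I would invoke them as known. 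The key point is that, after a local rescaling by $\sqrt k$ around a point $x_0$, the rescaled Bergman kernel converges to the Bargmann--Fock kernel $e^{\pi(z\cdot\bar w - |z|^2/2 - |w|^2/2)}$ (up to normalization), so the local model at every point of $X$ is the same Fock space.

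\smallskip

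Next I would introduce, for a ball $B=B(x_0,r)$, the \emph{concentration operator} $T_B$ on $H^0(L^k)$ given by $T_B s = P_k(\1_B s)$, where $P_k$ is the Bergman projection; equivalently $T_B$ has kernel $\1_B(x)K_k(x,y)$. This is a positive self-adjoint operator with $0\le T_B\le I$, and its trace is $\int_B K_k(x,x)\,(i\ddbar\phi)^n(x)$, which by the on-diagonal asymptotics is $\big(k^n/(\pi^n n!)+o(k^n)\big)\int_B (i\ddbar\phi)^n$ — this is the quantity measuring the ``local dimension'' of $H^0(L^k)$ over $B$. The core Landau-type lemma to prove is that the spectrum of $T_B$ is concentrated near $0$ and $1$ up to an error controlled by the size of a ``boundary layer'' of width $\sim 1/\sqrt k$ around $\partial B$: the number of eigenvalues in an intermediate range $[\eps,1-\eps]$, and more precisely the trace of $T_B-T_B^2$ (or $T_B(I-T_B)$), is $O\big(k^n \cdot (\text{surface area of }\partial B)\cdot k^{-1/2}\big)$, i.e.\ $O(k^{n-1/2}r^{n-1})$. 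This is proved by estimating $\operatorname{tr}(T_B - T_B^2) = \int_B\int_{X\setminus B}|K_k(x,y)|^2\,dV(x)\,dV(y)$ using the Gaussian-type off-diagonal decay $|K_k(x,y)|\lesssim k^n e^{-c\sqrt k\,\dist(x,y)}$: the double integral localizes to $\dist(x,\partial B)\lesssim 1/\sqrt k$ and $\dist(y,\partial B)\lesssim 1/\sqrt k$.

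\smallskip

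With the concentration operator understood, I would derive the density conditions by the standard comparison-of-dimensions argument. For part (i), suppose $\Lambda$ is sampling. The sampling inequality says evaluation at $\Lambda_k$ is essentially an isometry (up to constants) on $H^0(L^k)$; restricting attention to sections that are concentrated on $B$ (the range of the spectral projection of $T_B$ for eigenvalues near $1$), one shows that such a section is essentially determined by its values on $\Lambda_k\cap B$ (a slightly enlarged ball, enlarged by $O(1/\sqrt k)$ to absorb the tails), because a section that is small on $\Lambda_k\cap B$ and is Fock-concentrated on $B$ must be small on all of $\Lambda_k$ by the off-diagonal decay, hence small in $L^2$ by sampling. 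Therefore $\#(\Lambda_k\cap B^{+}) \ge \dim(\text{near-}1\text{ eigenspace of }T_B) \ge \operatorname{tr}(T_B) - \operatorname{tr}(T_B(I-T_B))/\eps$, which gives $\#(\Lambda_k\cap B(x_0,r+O(k^{-1/2}))) \ge \big(k^n/(\pi^n n!)\big)\int_B(i\ddbar\phi)^n - O(k^{n-1/2}r^{n-1})$. Dividing by $k^n\int_{B}(i\ddbar\phi)^n \asymp k^n r^{2n}$ and taking $r\ge R/\sqrt k$ turns the error into $O(R^{-1})$; a short argument handles the passage from $B(x_0,r+O(k^{-1/2}))$ back to a ball of radius $r$ (again costing $O(R^{-1})$). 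Part (ii) is dual: interpolation means one can solve $s(\lambda)=v_\lambda$ with $L^2$ control, so the evaluation map $H^0(L^k)\to \Cbb^{\Lambda_k\cap B}$ restricted to the near-$1$ eigenspace of $T_B$ is essentially \emph{onto}, forcing $\#(\Lambda_k\cap B^{-}) \le \operatorname{tr}(T_B)+O(k^{n-1/2}r^{n-1})$ with $B^{-}$ a slightly shrunk ball, giving the upper density bound.

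\smallskip

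I expect the main obstacle to be the quantitative boundary-layer estimate $\operatorname{tr}(T_B(I-T_B)) = O(k^{n-1/2}r^{n-1})$ uniformly in $x_0$ and in $r\in[R/\sqrt k,\operatorname{diam}X]$: one needs the off-diagonal Bergman decay with explicit dependence on $k$ and distance, uniform over the manifold, and one needs enough regularity of the balls $B(x,r)$ (with respect to the auxiliary hermitian metric) that their boundaries have surface measure $\lesssim r^{n-1}$ and that an $k^{-1/2}$-neighborhood of $\partial B$ has volume $\lesssim k^{-1/2}r^{n-1}$ — for small $r$ this is geometrically clear, for large $r$ one should just use $\operatorname{tr}(T_B(I-T_B)) \le \operatorname{tr}(T_B) \lesssim k^n$, which is already negligible after dividing by $k^n r^{2n}$. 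The second delicate point is making rigorous the heuristic ``a Fock-concentrated section is essentially determined by / can be prescribed on $\Lambda_k\cap B$'': this requires combining the spectral decomposition of $T_B$ with the separation of $\Lambda_k$ (to bound $\sum_{\lambda}|s(\lambda)|^2$ by a plunging argument) and with the sampling/interpolation constants, and carefully tracking that all the enlargements of $B$ are by $O(1/\sqrt k)$ so that they contribute only to the $O(R^{-1})$ error and not to the main term.
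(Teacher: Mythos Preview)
Your approach is essentially the same as the paper's: both proceed via the concentration operator $T_\Omega s = P_k(\1_\Omega s)$, compute $\operatorname{tr}(T_\Omega) = \int_\Omega |\Pi_k(x,x)|$ and $\operatorname{tr}(T_\Omega - T_\Omega^2) = \iint_{\Omega\times\Omega^c}|\Pi_k|^2$, bound the latter using the off-diagonal decay, and then run Landau's min-max comparison between eigenvalue counts of $T_\Omega$ and $\#(\Lambda_k\cap\Omega^{\pm})$.

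One slip to correct: the manifold has \emph{real} dimension $2n$, so the boundary layer of width $k^{-1/2}$ around $\partial B(x_0,r)$ has volume of order $r^{2n-1}k^{-1/2}$, not $r^{n-1}k^{-1/2}$; accordingly the key estimate is $\operatorname{tr}(T_B(I-T_B)) \lesssim k^{n-1/2}r^{2n-1}$ (in unscaled radius $r$), or equivalently $\iint_{\Omega\times\Omega^c}|\Pi_k|^2 \lesssim R^{2n-1}$ when $\Omega=B(z,R/\sqrt k)$, which is exactly what the paper proves. After dividing by $k^n\int_B(i\ddbar\phi)^n\asymp k^n r^{2n}$ and using $r\ge R/\sqrt k$, the error is $k^{-1/2}r^{-1}\le R^{-1}$ as you claim, so the conclusion is unaffected.
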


This result yields necessary conditions
in terms of the lower and upper Beurling-Landau densities,
defined by
\[
D^{-} (\Lambda) = \liminf_{R\to\infty} \nu_\Lambda^{-} (R),
\quad \text{and} \quad
D^{+} (\Lambda) = \limsup_{R\to\infty} \nu_\Lambda^{+} (R).
\]

\begin{corollary}\label{densities}
Let the line bundle $(L,\phi)$ be positive 
and $\Lambda=\{\Lambda_k\}$ be a separated array.
If $\Lambda$ is a sampling array then
\begin{equation*}
D^{-} (\Lambda) \geq \frac{1}{\pi^n n!}\,,
\end{equation*}
while if $\Lambda$ is an interpolation array then
\begin{equation*}
D^{+} (\Lambda) \leq \frac{1}{\pi^n n!}\,.
\end{equation*}
\end{corollary}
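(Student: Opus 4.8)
The plan is to deduce the corollary directly from Theorem~\ref{necessary_conditions} by letting $R \to \infty$. The first step is to observe that the quantities $\nu_\Lambda^{\pm}(R)$ are monotone in $R$: since enlarging $R$ shrinks the set of admissible pairs $(k,r)$ over which one takes the infimum (respectively supremum) of the ratio \eqref{equation11}, the function $R \mapsto \nu_\Lambda^{-}(R)$ is non-decreasing and $R \mapsto \nu_\Lambda^{+}(R)$ is non-increasing. Consequently the $\liminf$ and $\limsup$ that define $D^{-}(\Lambda)$ and $D^{+}(\Lambda)$ are genuine limits, equal to $\sup_R \nu_\Lambda^{-}(R)$ and $\inf_R \nu_\Lambda^{+}(R)$ respectively.

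For the sampling case, Theorem~\ref{necessary_conditions}(i) gives $\nu_\Lambda^{-}(R) > \frac{1}{\pi^n n!} - O(R^{-1})$; letting $R \to \infty$, the error term vanishes and the strict inequality passes to a non-strict one in the limit, yielding $D^{-}(\Lambda) \geq \frac{1}{\pi^n n!}$. Symmetrically, in the interpolation case Theorem~\ref{necessary_conditions}(ii) gives $\nu_\Lambda^{+}(R) < \frac{1}{\pi^n n!} + O(R^{-1})$, and letting $R \to \infty$ produces $D^{+}(\Lambda) \leq \frac{1}{\pi^n n!}$.

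There is no genuine obstacle here: the corollary is a purely formal consequence of the theorem, and the only point requiring a moment's thought is the monotonicity of $R \mapsto \nu_\Lambda^{\pm}(R)$, which is what legitimizes replacing the $\liminf/\limsup$ in the definition of the Beurling--Landau densities by an honest limit before passing to the bound. All of the analytic content --- the density estimates with the explicit constant $1/(\pi^n n!)$ arising from the analysis of Landau's concentration operator --- is already packaged inside Theorem~\ref{necessary_conditions}.
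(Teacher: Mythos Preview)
Your proof is correct and matches the paper's approach: the corollary is stated there as an immediate consequence of Theorem~\ref{necessary_conditions}, with no separate argument given. Your monotonicity observation is correct and pleasant, but note that it is not strictly required --- the bounds $D^{-}(\Lambda)\ge \tfrac{1}{\pi^n n!}$ and $D^{+}(\Lambda)\le \tfrac{1}{\pi^n n!}$ follow already from taking $\liminf$ and $\limsup$ directly in the inequalities of Theorem~\ref{necessary_conditions}, without first upgrading these to genuine limits.
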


When the complex manifold $X$ is one-dimensional, i.e.\ we are 
dealing with a compact Riemann surface, we have a more precise result.
In this case there is a complete
geometric characterization of the sampling and interpolation arrays
in terms of the above densities.

\begin{thm}
\label{iff_conditions}
Let $(L,\phi)$ be a positive line bundle over a compact Riemann
surface $X$, and let $\Lambda=\{\Lambda_k\}$ be a separated array.
Then $\Lambda$ is a sampling array if and only if
\begin{equation*}
D^{-} (\Lambda) > \frac{1}{\pi}\,,
\end{equation*}
while it is an interpolation array if and only if
\begin{equation*}
D^{+} (\Lambda) < \frac{1}{\pi}\,.
\end{equation*}
\end{thm}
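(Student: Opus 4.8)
The plan is to prove both equivalences by combining the necessary density conditions already established in Theorem~\ref{necessary_conditions} with a matching sufficiency argument that exploits the one-dimensional structure. The "only if" directions are essentially immediate: if $\Lambda$ is sampling then Corollary~\ref{densities} gives $D^-(\Lambda)\ge 1/\pi$, and one upgrades the inequality to a strict one by a perturbation argument — if $D^-(\Lambda)=1/\pi$ exactly, one removes a sparse subset of points (or slightly thins $\Lambda_k$) to produce an array that is still sampling, by a standard stability property of sampling arrays, but whose density has dropped below $1/\pi$, contradicting Corollary~\ref{densities}. The interpolation "only if" direction is dual: one adds points to an interpolation array, which preserves interpolation, to force the density strictly above $1/\pi$ if it started at equality. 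So the real content is the two "if" directions.

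For the "if" direction of sampling, the strategy is the classical one from one-variable weighted $L^2$ theory adapted to line bundles on a Riemann surface. Working locally in coordinate charts where the metric $k\phi$ looks, after rescaling by $\sqrt k$, like the Bargmann-Fock weight on $\mathbb C$, the hypothesis $D^-(\Lambda)>1/\pi$ translates into a genuine lower Beurling density condition in the Fock space $\mathcal F^2$. I would invoke the Ortega-Cerdà–Seip characterization of sampling sequences for the Bargmann-Fock space: a separated sequence is sampling iff its lower uniform density exceeds $1/\pi$. The passage from the global bundle setting to the local Fock model is handled by Berndtsson–Lindholm-type Bergman kernel estimates for $L^k$ (off-diagonal decay on the scale $k^{-1/2}$, and the precise on-diagonal asymptotics $\sim k^n/(\text{something})$), together with a partition of unity and a $\bar\partial$-correction using Hörmander's $L^2$ estimates with the positive weight $k\phi$ to glue local holomorphic data into a global section. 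The separation hypothesis is what guarantees the concentration operator and the sampling inequality survive this localization with uniform constants.

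For the "if" direction of interpolation, the approach is parallel: $D^+(\Lambda)<1/\pi$ becomes an upper Beurling density strictly below $1/\pi$ in the Fock model, and the Ortega-Cerdà–Seip criterion again says such separated sequences are interpolating for $\mathcal F^2$. One then builds the global interpolating section by solving a $\bar\partial$-equation: take local Fock-space solutions near each point of $\Lambda_k$, patch them with a cutoff, and correct the resulting smooth almost-holomorphic section using Hörmander/Ohsawa–Takegoshi estimates for the weight $k\phi$ — crucially, since we are on a Riemann surface the zero set of the correction can be controlled and the $L^2$ bound $\int_X|s|^2\lesssim k^{-1}\sum|v_\lambda|^2$ comes out with the right power of $k$. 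The main obstacle, and the place where dimension one is genuinely used, is precisely this gluing step: in higher dimensions the local Fock-space density criterion is not known to be sharp (indeed Theorem~\ref{necessary_conditions} only gives a one-sided bound), whereas for $n=1$ the Ortega-Cerdà–Seip theorem provides an exact density characterization of the local model, and the Riemann surface geometry lets the Hörmander correction be performed without losing the sharp constant $1/\pi$. I would therefore expect most of the work to go into making the localization-and-correction scheme quantitatively uniform in $k$ and in the base point, so that the strict density inequalities are exactly what is needed.
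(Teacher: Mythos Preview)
Your ``only if'' arguments contain a genuine gap. For sampling, you propose that if $D^-(\Lambda)=1/\pi$ one can \emph{thin} $\Lambda_k$ to an array that is still sampling but has $D^-$ strictly below $1/\pi$. But to lower the \emph{lower} density you must delete a positive proportion of points in some regions, and there is no ``standard stability property'' guaranteeing that sampling survives such a deletion; indeed, proving stability of sampling under positive-density removal is essentially equivalent to the strict inequality you are trying to establish. The interpolation side is worse: you claim that \emph{adding} points to an interpolation array preserves interpolation, which is false --- the enlarged array must interpolate values at the new nodes as well, and there is no reason it can. What the paper does instead is a \emph{level-shift} stability: it shows that if $\{\Lambda_k\}$ is sampling then, for some $\varepsilon>0$, the array $\{\Lambda_{(1-\varepsilon)k}\}$ is still sampling at level $k$ (and dually $\{\Lambda_{(1+\varepsilon)k}\}$ is still interpolating). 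Applying the non-strict Corollary to the shifted array then gives $(1-\varepsilon)D^-(\Lambda)\ge 1/\pi$, hence the strict inequality. The proof of this level-shift stability is the real content: one passes to weak limits $\Sigma$ of the rescaled sequences $\Sigma_k$ in normalized coordinates, uses Theorem~\ref{thmfock} to see that every such $\Sigma$ is sampling (resp.\ interpolating) in $\mathcal{BF}^2(\Cbb)$, hence satisfies the \emph{strict} Seip--Wallst\'en density condition, and then argues by contradiction (via a normal-families/compactness extraction) that the shifted array must also sample (resp.\ interpolate).

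Your ``if'' direction for interpolation is close in spirit to the paper's: local Fock-space interpolants plus a H\"ormander $\bar\partial$-correction, with the additional twist of working at level $(1-\varepsilon)k$ and tensoring with normalized Bergman kernels to manufacture the off-diagonal decay needed for the Schur-type estimate. For sampling, however, your proposed mechanism is off: there is no local-to-global gluing of holomorphic data in a sampling inequality --- you are given a global section and must bound it by its samples, so the $\bar\partial$-correction plays no role. The paper instead first proves $L^\infty$-sampling by a weak-limit contradiction argument (if it fails, rescaled sections converge to a nonzero $f\in\mathcal{BF}^\infty$ vanishing on a weak limit $\Sigma$ whose density exceeds $1/\pi$, contradicting Seip--Wallst\'en), and then upgrades $L^\infty$-sampling to $L^2$-sampling by the same Bergman-kernel trick used in Lemma~\ref{LemFekSamInt}, again exploiting that the density hypothesis is open to absorb the level shift.
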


We remark that the assumption that $\{\Lambda_k\}$ is separated is not
essential, and similar results hold in the general case. This
can be done with standard techniques, see e.g.\ \cite{Marzo07}, so we
will not go into these details in the paper.

\subsection{}
As pointed out in \cite{MarOrt10} Fekete points provide a construction of an
``almost''
sampling and interpolation array, with the critical density.
In particular this shows that the density threshold in Corollary \ref{densities}
is sharp (see Corollary \ref{cor_dens_sharp} in Section \ref{sec_wasserstein}).

In this context a natural question is whether the Fekete points, or possibly
some other array of points, is simultaneously sampling and
interpolation for $(L, \phi)$. In the case when the manifold $X$ is 
one-dimensional, this question is settled in the negative by 
Theorem~\ref{iff_conditions} above. For $n>1$ we do not have strict 
density conditions, and Corollary \ref{densities} does not exclude the existence of
simultaneously sampling and interpolation arrays. Nevertheless,
we will show that such arrays do not exist,
even in the more general setting when the metric $\phi$ is semi-positive and
has at least one point with a strictly positive curvature.

\begin{thm}\label{no_riesz_bases}
Let $L$ be a holomorphic line bundle over a compact projective manifold $X$, and
$\phi$ be a semi-positive smooth hermitian metric on $L$. If there is
a point in $X$ where $\phi$ has a strictly positive curvature, then there
are no arrays which are simultaneously sampling and
interpolation for $(L, \phi)$.
\end{thm}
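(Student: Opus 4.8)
The plan is to argue by contradiction, assuming that $\Lambda=\{\Lambda_k\}$ is simultaneously a sampling and interpolation array for $(L,\phi)$, and to extract from this a contradiction with a dimension count. The guiding principle is a \emph{perturbation} argument: if an array is simultaneously sampling and interpolation, then it is a stable basis (a Riesz-type basis) for the space of sections, and such a basis cannot be perturbed by adding or removing points without destroying one of the two properties. Concretely, I would first establish that a sampling-and-interpolation array $\Lambda$ must have Beurling--Landau density exactly $D^{\pm}(\Lambda)=1/(\pi^n n!)$; the inequalities $D^-(\Lambda)\ge 1/(\pi^n n!)$ and $D^+(\Lambda)\le 1/(\pi^n n!)$ come directly from Corollary~\ref{densities}, so the content is that $\# \Lambda_k$ matches $\dim H^0(L^k)$ up to the correct asymptotics. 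In fact, since $\Lambda$ is both sampling and interpolation, the sampling inequality forces $\# \Lambda_k \gtrsim \dim H^0(L^k)$ and the interpolation property forces $\# \Lambda_k \lesssim \dim H^0(L^k)$, and the two together with the density estimates pin down $\# \Lambda_k = \dim H^0(L^k) + o(k^n)$.

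Next I would localize near a point $p\in X$ where the curvature $i\ddbar\phi$ is strictly positive; this is where the semi-positivity hypothesis is used in an essential way. Near such a point the line bundle $(L^k, k\phi)$ looks, after rescaling by $\sqrt k$, like the Bargmann--Fock space on $\Cbb^n$, and one has good local reproducing kernel estimates (Bergman kernel asymptotics). The strategy is then to compare $\Lambda$ with a slightly denser array $\Lambda'$ obtained by adding, for each $k$, a single well-chosen point $\lambda_k^\ast$ near $p$ at distance $\asymp k^{-1/2}$ from $\Lambda_k$. On one hand, adding a point to an interpolation array keeps it an interpolation array only if the added point is not ``seen'' too strongly — more precisely, I would show that if $\Lambda$ is sampling then $\Lambda\cup\{\lambda_k^\ast\}$ is \emph{not} an interpolation array, because the value at $\lambda_k^\ast$ is already essentially determined (up to a small error controlled by the off-diagonal decay of the Bergman kernel on the scale $k^{-1/2}$) by the values on $\Lambda_k$; a section interpolating a value at $\lambda_k^\ast$ far from this determined value would need norm growing faster than the interpolation inequality allows. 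On the other hand — and this is the symmetric half — if $\Lambda$ is interpolation, then removing a suitable point $\lambda_k^{\ast\ast}\in\Lambda_k$ near $p$ yields an array $\Lambda''$ that is \emph{not} sampling, because one can build a section concentrated near $\lambda_k^{\ast\ast}$ that is small on all remaining points but has definite $L^2$-norm. Combining: $\Lambda$ sampling and interpolation would make $\Lambda'=\Lambda\cup\{\lambda_k^\ast\}$ still interpolation (by a density/comparison argument using that the extra point only increases the right-hand side), contradicting the previous paragraph.

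An alternative and perhaps cleaner route, which I would actually prefer to write up, is to run the dimension argument of Landau's concentration operator directly. Let $K_k$ be the Bergman kernel of $H^0(L^k)$ and consider the concentration (Toeplitz) operator $T_{B(p,r)}$ with $r=R/\sqrt k$; the machinery behind Theorem~\ref{necessary_conditions} shows that the number of its eigenvalues near $1$ is $\big(\tfrac{1}{\pi^n n!}+o(1)\big)\int_{B(p,r)}(i\ddbar\phi)^n$ as first $k\to\infty$ then $R\to\infty$. If $\Lambda$ is sampling then $\#(\Lambda_k\cap B(p,r(1+\eps)))$ must exceed this eigenvalue count, and if $\Lambda$ is interpolation then $\#(\Lambda_k\cap B(p,r(1-\eps)))$ must be \emph{below} it; letting $\eps\to0$ these squeeze $\#(\Lambda_k\cap B(p,r))$ to the critical value with no room to spare. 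The contradiction then comes from a more refined second-order term: at a point of strictly positive curvature, the eigenvalue count of $T_{B(p,r)}$ has a \emph{negative} boundary correction of order $r^{2n-1}$ (a ``plunge region'' of nonzero width $\asymp k^{-1/2}/r$), so the sampling lower bound and the interpolation upper bound are actually \emph{incompatible by a definite margin} once $r$ is on the natural scale $k^{-1/2}$, exactly as in the Paley--Wiener case treated by Nitzan--Olevskii. The hard part, and the step I expect to absorb most of the work, is making this boundary/plunge-region estimate for the concentration operator quantitative and uniform in $k$ — i.e.\ showing that the transition of the eigenvalues of $T_{B(p,r)}$ from near $1$ to near $0$ genuinely spreads over $\gtrsim 1$ eigenvalues (equivalently, that $\operatorname{tr}(T_{B(p,r)}-T_{B(p,r)}^2)\gtrsim r^{2n-1}k^{(2n-1)/2}$), since this is precisely the quantitative gap that rules out a Riesz basis and it relies delicately on the strict positivity of the curvature at $p$.
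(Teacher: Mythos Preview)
Both routes in your proposal have genuine gaps. The concentration-operator approach cannot work as stated: the plunge-region estimate $\operatorname{tr}(T_\Omega-T_\Omega^2)\gtrsim r^{2n-1}$ is a property of the \emph{space} and its reproducing kernel, not of the array $\Lambda$. Exactly the same plunge phenomenon occurs for the prolate-spheroidal operator in the Paley--Wiener space, where Riesz bases of exponentials \emph{do} exist; so no argument based solely on the eigenvalue distribution of $T_\Omega$ can exclude simultaneous sampling and interpolation. Concretely, the two Landau inequalities give $\#(\Lambda_k\cap B_{r+\delta})\ge n_k(\Omega,\gamma_1)$ and $\#(\Lambda_k\cap B_{r-\delta})\le n_k(\Omega,\gamma_2)$ with $\gamma_1$ near $1$ and $\gamma_2$ near $0$, hence $n_k(\Omega,\gamma_2)\ge n_k(\Omega,\gamma_1)$: the bounds are \emph{compatible}, not contradictory, however wide the plunge region is. (Nitzan--Olevskii reprove Landau's non-strict density theorem; they do not and cannot prove non-existence of Riesz bases.) Your perturbation route is also broken: the claim that ``$\Lambda$ sampling and interpolation would make $\Lambda\cup\{\lambda_k^\ast\}$ still interpolation'' is false --- adding a point adds a constraint, and in fact since sampling forces $\#\Lambda_k\ge\dim H^0(L^k)$ while interpolation forces the reverse, equality holds and the enlarged set fails interpolation trivially by dimension, so the contradiction you set up is vacuous. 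Finally, Corollary~\ref{densities} is proved only for \emph{positive} line bundles, so you cannot invoke it under the semi-positive hypothesis of the theorem.

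The paper's argument is of a completely different nature and is local rather than global. One fixes a point $x_0$ of strictly positive curvature, takes normalized coordinates there, and rescales $\Lambda_k$ by $\sqrt k$ to obtain sequences $\Sigma_k\subset\Cbb^n$. The key step (Theorem~\ref{thmfock}) is that any weak limit $\Sigma$ of $\{\Sigma_k\}$ is both sampling and interpolating for the Bargmann--Fock space $\mathcal{BF}^2(\Cbb^n)$: interpolation passes to the limit by a normal-families argument, while sampling requires an approximation lemma producing global sections of $L^k$ that match a prescribed Fock function near $x_0$, obtained via $\bar\partial$-estimates with a singular auxiliary metric (this is where projectivity and semi-positivity enter). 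One then invokes the known theorem \cite{AscFeiKai11,GroMal11} that $\mathcal{BF}^2(\Cbb^n)$ admits no sequence that is simultaneously sampling and interpolating. The essential input you are missing is precisely this reduction to the Fock model and the cited non-existence result there, which genuinely distinguishes the Fock situation from Paley--Wiener.
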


Here we need to assume that the manifold $X$ is projective.
When the line bundle is positive this is automatically the case, according
to the Kodaira embedding theorem \cite{Kodaira54}.

The non-existence of simultaneously sampling and interpolation 
sequences is a recent result in the classical Bargmann-Fock
space \cite{AscFeiKai11,GroMal11}. To prove Theorem \ref{no_riesz_bases} 
we use the fact that near a point of positive curvature, the 
sections of high powers of the line bundle resemble closely the functions in the
Bargmann-Fock space. Also our proof of Theorem \ref{iff_conditions}
is guided with the same principle.

\subsection{}
The plan of the paper is the following. In Sections~\ref{SecPreliminaries} and
\ref{SecFeketeProperties} we 
provide the basic properties of the Fekete points, and of the Hilbert space of
holomorphic sections that will be the main tool to study them. In
Section~\ref{secsampintFek} we introduce the sampling and interpolation arrays
and discuss their relationship with the Fekete points. In Section~\ref{Landconc}
we study Landau's concentration operator, that will allow us to
measure the local dimension of the space of sections 
essentially concentrated in a given ball, and use this local dimension to
estimate the number of points in an interpolation or sampling array. In
Section~\ref{curvdens} we estimate the density of the interpolation
and sampling arrays in terms of the volume form associated
to the curvature of the line bundle. In Section~\ref{sec_wasserstein}
we give an  estimate from above and below on the number of Fekete points that lie in a given ball.
We also provide the upper and lower bounds for the Kantorovich-Wasserstein distance between 
the Fekete measure \eqref{Feketemeasure} and its limiting measure.

Next we proceed to a more detailed study of the sampling and 
interpolation arrays. In Section~\ref{simultaneous} we prove
that in a big line bundle with a semipositive metric, whenever there is a point
of positive curvature there are no arrays that are simultaneously sampling
and interpolation. Finally in Section~\ref{onedim} we obtain a
geometric characterization of sampling and interpolation arrays for positive
line bundles over compact manifolds of dimension one.

\subsection*{Acknowledgement}

Part of this work was done while Nir Lev was staying at the
Centre de Recerca Matem\`atica (CRM) in Barcelona, and he would like
to express his gratitude to the institute for the hospitality
and support during his stay.

% ====================================================================

\section{Preliminaries}\label{SecPreliminaries}

In this section we recall some basic properties of holomorphic line bundles
over complex manifolds. For these and other elementary facts on this subject,
stated below without proofs, the reader may consult \cite{Berndtsson10}.

\subsection{Line bundles}
Below $X$ will be a compact complex manifold of dimension $n$, endowed
with a smooth hermitian metric $\omega$. The metric $\omega$
induces a distance function $d(x,y)$ on $X$, which will be used to define 
the balls $B(x,r) = \{y \in X : d(x,y) < r\}$. 
The hermitian metric $\omega$ also induced a volume form $V$ on $X$, 
which will be used to integrate over $X$.
We emphasize that the choice of the metric $\omega$ is arbitrary,
and the results will not depend on the particular choice made.

By $\Lcal$ we denote a holomorphic line bundle over the manifold $X$.
We assume that $\Lcal$ is endowed with a smooth hermitian metric 
$\phi$, which is a smoothly varying norm on each fiber. It has to
be understood as a collection of functions $\phi_i$ defined on 
trivializing open sets $U_i$ which cover $X$, and satisfying the
compatibility conditions
\[
 \phi_i-\phi_j =\log|g_{ij}|^2,
\]
where $g_{ij}$ are the transition functions of the line bundle $L$ on $U_i\cap U_j$.
If $s$ is a section to $L$ represented by a collection of local functions $s_i$ 
such that $s_i = g_{ij} s_j$, then
\[
|s(x)|^2 = |s_i(x)|^2 e^{-\phi_i(x)}.
\]
We also have an associated scalar product, defined in a similar way by
\[
\dotprod{u(x)}{v(x)} = u_i(x) \overline{v_i(x)} e^{-\phi_i(x)}.
\]

If $\phi$ is the hermitian metric on $L$, then
$\partial \bar\partial \phi$ is a globally defined $(1,1)$-form on $X$, which is
called the \emph{curvature form} of the metric $\phi$.
The line bundle $L$ with the metric $\phi$ is called \emph{positive} if
$i \partial \bar\partial \phi$ is a positive form. Equivalently, $L$ with the
metric $\phi$ is positive if the representative of $\phi$ 
with respect to any local trivialization is a strictly plurisubharmonic
function.
We remark that in the case when $\phi$ is positive, the curvature form
$\partial \bar\partial \phi$ may be used to define a natural metric 
on $X$, which in turn induces a distance function and a volume form on $X$.
However, we find it convenient to work with an arbitrary metric $\omega$,
which is not necessarily related to the curvature form.

We will use the notation $\lesssim$ to indicate an implicit multiplicative
constant which may depend only on the hermitian manifold $(X, \omega)$ and the hermitian 
line bundle $(L, \phi)$.

The space of global holomorphic sections to $\Lcal$ will be denoted $\HoL$.
This is a finite-dimensional space, satisfying the estimate
\[
\dim \HoLk \lesssim k^n.
\]
While the latter estimate holds for an arbitrary line bundle on a compact manifold,
in the case when the line bundle $L$ is big there is also a similar estimate from
below, i.e.\
\begin{equation}\label{h_dim_pos}
k^n \lesssim \dim \HoLk \lesssim k^n.
\end{equation}
In particular this holds whenever the line bundle $L$ is positive.

If $L$ is a line bundle over $X$ and $M$ is a line bundle over $Y$, we
denote by $L\boxtimes M$ the line bundle over the product manifold
$X\times Y$ defined as $L \boxtimes M= \pi_X^*(L)\otimes \pi_Y^*(M)$,
where $\pi_X: X\times Y\to X$ is the projection onto the first factor
and $\pi_Y: X\times Y\to Y$ is the projection onto the second.

\subsection{Bergman kernel}
The space $\HoL$ admits a Hilbert space structure when endowed with the scalar product
\begin{equation*}
\dotprod{u}{v} = \int_X \dotprod{u(x)}{v(x)}, \quad u,v \in \HoL,
\end{equation*}
where the integration is taken with respect to the volume form $V$.

The Bergman kernel $\Pi (x,y)$ associated to this space is 
a section to the line bundle $\Lcal \boxtimes \bar\Lcal$
over the manifold $X\times X$, defined by
\begin{equation}\label{EqBergmanKernelBasis}
\Pi (x,y)=\sum_{j=1}^N s_j (x) \otimes \overline{s_j (y)},
\end{equation}
where $s_1,\ldots, s_N$ is an orthonormal basis for $\HoL$.
It is easy to check that this definition does not depend on the particular choice
of the orthonormal basis $s_1,\ldots, s_N$.
The Bergman kernel $\Pi (x,y)$ is in a sense the reproducing kernel 
for the space $\HoL$, satisfying the reproducing formula
\begin{equation*}
s(x) = \int_X \big \langle s(y), \Pi (x,y)\big \rangle \, dV(y)
\end{equation*}
for $s \in \HoL$.
The pointwise norm of the Bergman kernel is symmetric,
\begin{equation}\label{EqBergmanSymm}
| \Pi(x,y) | = | \Pi(y,x) |.
\end{equation}
The function $|\Pi(x,x)|$ is called the Bergman function of $\HoL$. It can be 
expressed as
\begin{equation}\label{EqBergmanBasis}
| \Pi(x,x) | = \sum_{j=1}^N \big | s_j (x)\big |^2,
\end{equation}
and it satisfies
\begin{equation}\label{EqBergmanDiag}
| \Pi(x,x) | = \int_X |\Pi(x,y)|^2 \, dV(y).
\end{equation}

\begin{lemma}\label{LemBergmanSec}
Let $y\in X$. There is a section $\Phi_y \in \HoL$ such that
\begin{equation*}
\big | \Phi_y (x) \big |= \big | \Pi (x,y)\big |, \quad x\in X.
\end{equation*}
\end{lemma}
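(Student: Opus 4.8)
The plan is to exploit the fact that, for a fixed point $y$, the Bergman kernel $\Pi(\cdot,y)$ is a section of $\Lcal$ tensored with the single one-dimensional conjugate fiber $\bar\Lcal_y$; choosing a unit vector in that fiber identifies $\bar\Lcal_y$ with $\Cbb$ and turns $\Pi(\cdot,y)$ into an honest holomorphic section of $\Lcal$ with the same pointwise norm.

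First I would fix $y\in X$ and pick a vector $e$ in the fiber $\Lcal_y$ with $|e|_\phi=1$; this is possible since the fiber is one-dimensional and $\phi$ restricts to a norm on it (equivalently, work in a local trivialization around $y$). Writing the values of an orthonormal basis as $s_j(y)=a_j e$ with $a_j\in\Cbb$ for $j=1,\dots,N$, I would set
\[
\Phi_y := \sum_{j=1}^{N} \overline{a_j}\, s_j .
\]
Being a constant-coefficient linear combination of holomorphic sections, $\Phi_y\in\HoL$. Then, computing inside $\Lcal_x\otimes\bar\Lcal_y$ and using \eqref{EqBergmanKernelBasis},
\[
\Pi(x,y)=\sum_{j=1}^{N} s_j(x)\otimes\overline{s_j(y)}
=\sum_{j=1}^{N} s_j(x)\otimes\overline{a_j}\,\bar e
=\Phi_y(x)\otimes\bar e .
\]
Taking pointwise norms and using that the induced metric on $\Lcal\boxtimes\bar\Lcal$ is the tensor-product metric, we get $|\Pi(x,y)|=|\Phi_y(x)|\,|\bar e|=|\Phi_y(x)|$, since $|\bar e|=|e|_\phi=1$. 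This is exactly the asserted identity.

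I do not expect a real obstacle here: the only delicate point is the bookkeeping of the conjugate-fiber and tensor-product structure, so that $|\bar e|=|e|_\phi$ and the tensor norm factors as a product. One could instead argue intrinsically via the reproducing property of $\Pi$, but the unit-vector description above is the most transparent, and it makes clear that $\Phi_y$ is independent of the chosen orthonormal basis and depends on $e$ only up to a unimodular constant, which does not affect $|\Phi_y|$.
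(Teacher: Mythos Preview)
Your proof is correct and is essentially the same as the paper's: both fix a trivialization of the fiber $\Lcal_y$, write $s_j(y)$ in terms of a chosen vector there, and define $\Phi_y$ as the corresponding conjugate-coefficient combination of the $s_j$. The only cosmetic difference is that the paper picks an arbitrary local frame $e(x)$ near $y$ and normalizes by the factor $|e(y)|$, whereas you choose $e$ to have unit length from the start; with $a_j = f_j(y)\,|e(y)|$ the two definitions coincide, and the norm computation is identical.
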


\begin{proof}
Let $s_1,\ldots, s_N$ be an orthonormal basis for $\HoL$.
Fix a frame $e(x)$ in a neighborhood $U$ of the point $y$, then
in this neighborhood each $s_j$ is represented by a holomorphic
function $f_j$ such that $s_j (x)= f_j (x) e(x)$. Define
\begin{equation*}
\Phi_y (x) := |e(y)| \sum_{j=1}^n \overline{f_j (y)} s_j (x),
\end{equation*}
then $\Phi_y$ is a holomorphic section to $L$, and we have
\[
\big | \Phi_y (x)\big | = \bigg| \Big(
\sum_{j=1}^N  \overline{f_j(y)} s_j(x) \Big) \otimes \overline{e(y)} \bigg |
=\bigg |
\sum_{j=1}^N  s_j (x) \otimes \overline{s_j(y)}\bigg | =\big|\Pi (x,y)\big|.
\qedhere
\]
\end{proof}

We denote by $\Pi_k (x,y)$ the
Bergman kernel for the $k$'th power $L^k$ of the line bundle $\Lcal$ (where $L^k$ is
endowed with the product metric $k \phi$). The behavior of 
$\Pi_k (x,y)$ as $k \to \infty$ 
is of special importance. In the case when the line bundle $(L, \phi)$ is positive,
it is known (see e.g.\ \cite{Berndtsson03, Lindholm01}) that
\begin{equation}\label{EstimateOnDiagonal}
k^n \lesssim |\Pi_k(x,x)| \lesssim k^n,
\end{equation}
and
\begin{equation}\label{EstimateOffDiagonal}
|\Pi_k(x,y)| \lesssim k^n \exp \big( -c \sqrt{k} \, d(x,y) \big),
\end{equation}
where $c = c(X,\omega,L,\phi)$ is an appropriate positive constant.

\subsection{Sub-mean value property}

Let $s\in \HoLk$. If $z \in X$ and $0 < \delta < 1$, then
\begin{equation}\label{InEqSubMean}
\big |s(z)\big |^p \lesssim_p \bigl(\frac \delta{\sqrt{k}}\bigr)^{-2n} 
\int_{B(z, \delta/\sqrt{k})} \big |s(x)\big |^p \qquad\quad (1\leq p< \infty).
\end{equation}
where by $\lesssim_p$ we mean that the implicit constant may also 
depend on $p$.
This can be deduced easily from the compactness of $X$ and the
corresponding fact in $\Cbb^n$, which may be found for example in
\cite[Lemma 7]{Lindholm01}.

As a consequence we have the following Plancherel-P\'olya type inequality:

\begin{lemma}\label{LemmaPlancherelPolya}
Let $\{x_j\}$ be  points in $X$ such that
$d(x_i, x_j) \geq \delta/\sqrt{k}$, where $0 < \delta < 1$. Then
\begin{equation*}
k^{-n} \sum_{j}
\big | s(x_j) \big |^p \lesssim_p
\delta^{-2n} \int_X \big |s(x) \big |^p  \qquad\quad (1\leq p< \infty)
\end{equation*}
for any $s \in \HoLk$.
\end{lemma}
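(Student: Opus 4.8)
The plan is to reduce the statement to the pointwise sub-mean value inequality \eqref{InEqSubMean}, exploiting the separation of the points $\{x_j\}$ to produce a family of pairwise disjoint balls. First I would apply \eqref{InEqSubMean} at each point $x_j$, but with the radius $\delta/(2\sqrt{k})$ rather than $\delta/\sqrt{k}$; that is, I apply it with the parameter $\delta$ there replaced by $\delta/2$, which still lies in $(0,1)$. This gives
\[
\big|s(x_j)\big|^p \lesssim_p \Big(\frac{\delta}{2\sqrt{k}}\Big)^{-2n} \int_{B(x_j,\,\delta/(2\sqrt{k}))} \big|s(x)\big|^p
\]
for every $j$, with an implicit constant depending only on $p$ and the fixed data $(X,\omega)$, $(L,\phi)$ --- in particular independent of $j$, $k$ and $\delta$.

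Next, since $d(x_i,x_j)\geq \delta/\sqrt{k}$ for all $i\neq j$, the balls $B(x_j,\,\delta/(2\sqrt{k}))$ are pairwise disjoint in $X$. Summing the previous inequality over $j$ and using disjointness to bound $\sum_j \int_{B(x_j,\,\delta/(2\sqrt{k}))} |s(x)|^p \leq \int_X |s(x)|^p$, I obtain
\[
\sum_{j}\big|s(x_j)\big|^p \lesssim_p \Big(\frac{\delta}{2\sqrt{k}}\Big)^{-2n} \int_X \big|s(x)\big|^p = 2^{2n}\, k^{n}\, \delta^{-2n} \int_X \big|s(x)\big|^p .
\]
Dividing through by $k^n$ and absorbing the harmless constant $2^{2n}$ into the implicit constant yields exactly the asserted bound.

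There is essentially no serious obstacle in this argument; the one point that requires a little care is the choice of radius: taking $\delta/(2\sqrt{k})$ instead of $\delta/\sqrt{k}$ is what guarantees disjointness of the balls, and one should observe that replacing $\delta$ by $\delta/2$ in \eqref{InEqSubMean} does not make the implicit constant there deteriorate, since it is uniform over $\delta\in(0,1)$.
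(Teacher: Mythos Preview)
Your argument is correct and is exactly the standard deduction the paper has in mind: the paper does not spell out a proof of this lemma but simply presents it ``as a consequence'' of the sub-mean value inequality \eqref{InEqSubMean}, and your use of half-radius balls to ensure disjointness, followed by summation, is precisely how that consequence is obtained.
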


% ==================================================================

\section{Fekete points and their properties}\label{SecFeketeProperties}

\subsection{}
Let $N = \dim \HoL$, and $s_1, \ldots, s_N$ be a basis for $\HoL$.
A configuration of $N$ points $x_1,\ldots, x_N$ in $X$ is called a \emph{Fekete
configuration} if it maximizes the pointwise norm of the Vandermonde-type determinant
\begin{equation*}
\det (s_i (x_j)), \quad 1\leq i,j\leq N,
\end{equation*}
which is a holomorphic section to the line bundle $\Lcal^{\boxtimes N}$ over the
manifold $X^N$ (endowed with the metric inherited from $\Lcal$).

If $e_j(x)$ is a frame in a neighborhood $U_j$ of the point $x_j$, then
the sections $s_i (x)$ are represented on each $U_j$ by scalar functions
$f_{ij}$ such that
$s_i (x)=f_{ij} (x) e_j(x)$. Similarly, the metric $\phi$ is represented on $U_j$
by a smooth real-valued function $\phi_j$ such that
$| s_i (x)|^2= |f_{ij} (x) |^2 e^{-\phi_j (x)}$.
A Fekete configuration thus maximizes the quantity
\begin{equation}\label{vandermonde}
e^{-\phi_1 (x_1)}\cdots e^{-\phi_N (x_N)}
\big | \det \big ( f_{ij} (x_j)\big )\big |^2.
\end{equation}

By the compactness of $X$, Fekete configurations exist, but in general 
there need not be a unique one. One may check that the norm $| \det (s_i (x_j)) |_\phi$
at a Fekete configuration $x_1, \dots, x_N$ is always non-zero.
It is also easy to check that the definition of a Fekete configuration does not
depend on the particular choice of the basis $s_1, \ldots,s_N$ of $\HoL$.

The function \eqref{vandermonde} is a Vandermonde-type determinant that
vanishes when two points are equal. It is exactly the familiar
Vandermonde determinant in the special case when the sections $s_i$ 
are the monomials in dimension one, and the weight $\phi$ is constant.
This suggests what is actually happening -- the Fekete points
repel each other and tend to be in a sense ``maximally spread''.

\subsection{}
The main property of the Fekete points $x_1,\ldots, x_N$ that will be
used is the existence of ``Lagrange sections" with a uniformly bounded norm.
Namely, we have sections $\ell_1, \dots, \ell_N$ in $\HoL$ such that
\begin{equation}\label{EqLagrangeDelta}
| \ell_j (x_i) | =\delta_{ij}, \quad 1\leq i,j\leq N,
\end{equation}
and moreover, they satisfy the additional condition
\begin{equation}\label{EqLagrangeBound}
\sup_{x\in X} \big | \ell_j (x) \big |=1, \quad 1\leq j\leq N.
\end{equation}
To construct these sections we denote by $M$ the matrix
$\big ( e^{-\frac{1}{2}\phi_j (x_j)}f_{ij}(x_j)\big )$, and define
\begin{equation*}
\ell_j (x) := \frac{1}{\det (M)} \sum_{i=1}^N (-1)^{i+j} M_{ij} s_i (x),
\end{equation*}
where $M_{ij}$ is the determinant of the submatrix obtained from $M$ by removing
the $i$-th row and $j$-th column. Clearly $\ell_j\in \HoL$, and it is not 
difficult
to check that conditions \eqref{EqLagrangeDelta} and \eqref{EqLagrangeBound} above hold,
where \eqref{EqLagrangeBound} is a
consequence of the extremal property of the Fekete configuration $x_1,\ldots,
x_N$.

We also observe that the system $\{\ell_j (x)\}$ forms a \emph{basis} of $\HoL$.
Indeed, the condition \eqref{EqLagrangeDelta} implies that the $\ell_j(x)$ are 
linearly independent, and
since they form a system with $N$ elements, $N=\dim \HoL$,
they span the whole $\HoL$. 
An element $s\in \HoL$ thus has a unique expansion
\begin{equation*}
s(x)=\sum_{j=1}^N c_j \ell_j (x),
\end{equation*}
and the coefficients $c_j$ are given by
\begin{equation*}
c_j=\big \langle s(x_j), \ell_j (x_j)\big \rangle, \quad 1\leq j\leq N,
\end{equation*}
which again follows from \eqref{EqLagrangeDelta}.

\subsection{}
One consequence of the construction above
is that Fekete points form a separated array.

\begin{lemma}\label{LemFekSeparated}
Let $\Fcal_k$ be a Fekete configuration for $(L^k, k \phi)$. Then
\begin{equation}\label{EqFekSep}
d(x,y) \gtrsim \frac1{\sqrt{k}}, \quad x,y \in \Fcal_k, \quad x \neq y.
\end{equation}
\end{lemma}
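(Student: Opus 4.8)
The plan is to use the Lagrange sections $\ell_j$ associated to the Fekete configuration $\Fcal_k = \{x_1,\ldots,x_N\}$ for $(L^k, k\phi)$, whose existence and properties were established in the previous subsection. Recall that these satisfy $|\ell_j(x_i)| = \delta_{ij}$ and $\sup_{x\in X}|\ell_j(x)| = 1$. The key point is that each $\ell_j$ is a holomorphic section of $L^k$ which attains its maximum modulus $1$ at the point $x_j$, and vanishes at every other Fekete point $x_i$. So if two Fekete points $x_i$ and $x_j$ with $i\neq j$ were very close, the section $\ell_j$ would have to drop from the value $1$ at $x_j$ down to $0$ at the nearby point $x_i$, and this is incompatible with the sub-mean value inequality on the relevant scale $1/\sqrt{k}$.

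To make this quantitative, first I would normalize: fix distinct points $x_i, x_j \in \Fcal_k$ and set $\delta = \sqrt{k}\, d(x_i, x_j)$; we may assume $\delta < 1$, since otherwise there is nothing to prove. Apply the sub-mean value property \eqref{InEqSubMean} (with $p = 2$, say) to the section $\ell_j$ at the point $x_j$, over the ball $B(x_j, \delta/\sqrt{k})$. This gives
\[
1 = |\ell_j(x_j)|^2 \lesssim \Big(\frac{\delta}{\sqrt{k}}\Big)^{-2n} \int_{B(x_j, \delta/\sqrt{k})} |\ell_j(x)|^2.
\]
Now the ball $B(x_j, \delta/\sqrt{k})$ contains the point $x_i$ (by the choice of $\delta$), and on this ball $|\ell_j|$ is bounded by $1$; but more importantly, since $\ell_j(x_i) = 0$ and $|\ell_j|\le 1$ everywhere, a standard Schwarz-type estimate shows $|\ell_j(x)|$ is small on a neighborhood of $x_i$ of the same scale. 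Rather than invoke a Schwarz lemma directly on the bundle, the cleanest route is: on the ball, trivialize and write $\ell_j$ in local coordinates; the local holomorphic representative $g$ satisfies $|g|\lesssim 1$ on a fixed coordinate polydisc of radius $\sim 1/\sqrt k$ and $g$ vanishes at the image of $x_i$. Rescaling by $\sqrt k$ reduces to the statement that a holomorphic function bounded by a constant on the unit ball of $\Cbb^n$, vanishing at the origin, has $L^2$ norm over a ball of radius $\delta$ that is $O(\delta^{2n}\cdot\delta^2) = O(\delta^{2n+2})$ — in fact even $O(\delta^{2n+1})$ from the vanishing suffices. Combining with the displayed inequality yields $1 \lesssim \delta^{-2n}\cdot \delta^{2n+1} = \delta$, hence $\delta \gtrsim 1$, i.e.\ $d(x_i,x_j)\gtrsim 1/\sqrt k$.

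The main technical point to be careful about is the passage from the bundle to local coordinates and back: one must check that the trivializing frames and the coordinate charts can be chosen of uniformly bounded geometry so that all the implied constants depend only on $(X,\omega)$ and $(L,\phi)$ and not on $k$ or on which pair of points we look at. This uses compactness of $X$ together with the fact that the metric $k\phi$ in a chart of size $1/\sqrt k$, after rescaling coordinates by $\sqrt k$, converges (with uniform control) to a fixed reference metric — this is exactly the kind of normalization already implicit in the statement of the sub-mean value inequality \eqref{InEqSubMean}, so in fact one can simply quote \eqref{InEqSubMean} and then apply it a second time to control $\int_{B(x_i,c\delta/\sqrt k)}|\ell_j|^2$ from below against $|\ell_j(x_i)|^2 = 0$ — wait, that is vacuous, so instead one genuinely needs the vanishing-improved Schwarz estimate. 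Alternatively, and perhaps most economically, one can avoid the Schwarz lemma entirely by applying \eqref{InEqSubMean} at $x_j$ but over the smaller ball $B(x_j, \delta/(2\sqrt k))$, which does \emph{not} contain $x_i$, and then using that $\int_X|\ell_j|^2$ is itself not too large — but this does not obviously use the vanishing and so does not give separation. So the honest proof does require the improved bound from $\ell_j(x_i)=0$; establishing that cleanly, with $k$-independent constants, is the one step that needs a little care, and it is where I would spend the effort.
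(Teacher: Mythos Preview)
Your approach is correct and shares the paper's core idea: exploit the Lagrange section $\ell_j$, which satisfies $|\ell_j(x_j)|=1$, $\ell_j(x_i)=0$, and $|\ell_j|\le 1$ on $X$, and argue that a section of $L^k$ cannot jump from $0$ to $1$ over a distance $\ll 1/\sqrt{k}$.

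The paper's execution is more direct than yours, however. Rather than routing through the sub-mean value inequality and an $L^2$ Schwarz estimate, the paper isolates a pointwise Bernstein-type bound as a separate lemma (Lemma~\ref{LemGradient}): in local coordinates and a local frame,
\[
\Big|\partial_{z_j}\big(|f|^2 e^{-k\phi}\big)(w)\Big|\le C\sqrt{k}\,\sup_{U_k(w)}|f|^2 e^{-k\phi},
\]
with $C$ independent of $k$. The proof of the separation is then a straight contradiction: if $\sqrt{k}\,d(x_k,y_k)\to 0$ along a subsequence with $x_k,y_k\in\Fcal_k$ distinct, pass to a limit point, trivialize there, and apply the mean value theorem to $|\ell_j|^2$ (written locally as $|f_k|^2e^{-k\phi}$) on the segment from $x_k$ to $y_k$; the gradient would have to exceed any constant multiple of $\sqrt{k}$ somewhere, contradicting the lemma.

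This gradient bound is precisely the ``Schwarz-type estimate with $k$-independent constants'' you correctly flagged as the one step needing care. The paper simply packages it as a clean pointwise statement and skips the $L^2$ detour---which, as your own back-and-forth shows, adds bookkeeping without adding content. Your version would work, but the direct pointwise argument (apply Schwarz at $x_j$ centered at the zero $x_i$, or equivalently bound the gradient) is shorter and avoids the awkwardness of $x_i$ sitting on the boundary of $B(x_j,\delta/\sqrt{k})$.
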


\begin{proof}
Indeed, if this is not the case, there are points $x_k, y_k\in \Fcal_k$,
$\sqrt{k}\,d(x_k,y_k)\rightarrow 0$ but $x_k \neq y_k$, for infinitely many
$k$'s.
By compactness we may assume that $x_k, y_k$ converge to some point $x\in X$. 
We choose local coordinates $z$ in a neighborhood of $x$, and a local
trivialization of the line bundle $\Lcal$ in this neighborhood. The metric on
$\Lcal$ is represented by a smooth function $\phi (z)$, and the metric on
$\Lcal^k$ is given by $k \phi (z)$.

For each $k$, we have a ``Lagrange section" vanishing on $x_k$ and having norm
one on $y_k$. Let it be given by a holomorphic function $f_k (z)$ with respect
to the local trivialization. Thus
\begin{equation*}
\big | f_k (z)\big |^2 e^{-k\phi (z)} =\begin{cases}
0, & z=z(x_k)\\*[4pt]
1, & z=z(y_k)
\end{cases}
\end{equation*}
and $| f_k (z)|^2 e^{-k\phi (z)}\leq 1$ for all other $z$.

On the other hand, 
the distance function $d$ is equivalent to the Euclidean distance with respect
to the local coordinates. Hence,
\begin{equation*}
\sqrt{k} \,\big | z(x_k)-z(y_k)\big | \longrightarrow 0 \quad (k \rightarrow
\infty).
\end{equation*}
This implies that the norm of the gradient of $|f|^2 e^{-k\phi}$ must be,
at some point $z_k$, larger that $\sqrt{k}$ times a magnitude tending to infinity.
However, Lemma \ref{LemGradient} below shows that the last conclusion is not possible,
and this contradiction concludes the proof of Lemma \ref{LemFekSeparated}.
\end{proof}

\begin{lemma}\label{LemGradient}
Let $\phi (z)$ be a smooth, real-valued function in a neighborhood of the
point $w=(w_1,\ldots,w_n)\in \Cbb^n$. Then there are constants $C$ and $k_0$ 
such that the 
following holds. Let $k \geq k_0$, and $f(z)$ be a holomorphic function
in a neighborhood of the compact set
\[
U_k(w)=\big \{z\in \Cbb^n : |z_j-w_j|\leq 1/\sqrt{k} \; (j=1,\dots,n) \big \}.
\]
Then for each $1\leq j\leq n$ we have
\begin{equation*}
\bigg | \frac{\partial}{\partial z_j} \Big [ \big | f\big |^2 e^{-k\phi}\Big ]
(w) \bigg |\leq C \sqrt{k}\sup_{U_k(w)} \big |f\big |^2 e^{-k\phi}.
\end{equation*}
\end{lemma}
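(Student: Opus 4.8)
The plan is to reduce the statement to a one–variable estimate applied coordinate by coordinate, and to exploit the holomorphicity of $f$ together with the smoothness of $\phi$ on the cube $U_k(w)$. First I would write, for a fixed index $j$ and with all other coordinates frozen at $w_\ell$, the function $g(\zeta) = |f(w_1,\dots,\zeta,\dots,w_n)|^2 e^{-k\phi(\dots,\zeta,\dots)}$ of the single complex variable $\zeta$ ranging over the disc $|\zeta - w_j| \le 1/\sqrt{k}$. Since $\partial/\partial z_j$ of a real-valued function at $w$ is $\tfrac12(\partial_{x_j} - i\partial_{y_j})$, bounding $|\partial_{z_j}[|f|^2 e^{-k\phi}](w)|$ amounts to bounding the (real) gradient of $g$ at its center, so it suffices to estimate $|\nabla g(w_j)|$ in the two real variables underlying $\zeta$.

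Next I would decompose the derivative using the product rule: $\partial_{z_j}(|f|^2 e^{-k\phi}) = (\partial_{z_j}|f|^2) e^{-k\phi} - k (\partial_{z_j}\phi) |f|^2 e^{-k\phi}$. The second term is immediately controlled: $|\partial_{z_j}\phi(w)| \le \|\nabla\phi\|_{L^\infty}$ on a fixed neighborhood, and $|f(w)|^2 e^{-k\phi(w)} \le \sup_{U_k(w)} |f|^2 e^{-k\phi}$, but this gives a factor $k$, not $\sqrt{k}$ — so the point is that this term must be partially cancelled, or rather that we should not split this way. Instead I would keep $|f|^2$ and $e^{-k\phi}$ together only after a smarter manipulation: note $\partial_{z_j}(|f|^2) = \overline{f}\, f'$ where $f' = \partial_{z_j} f$ is holomorphic, and by the Cauchy estimate on a disc of radius $1/\sqrt{k}$ one has $|f'(w)| \le \sqrt{k}\, \sup |f|$ on that disc. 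Combined with $|\overline{f(w)}| e^{-k\phi(w)} \cdot (\text{something})$, this already produces the desired $\sqrt{k}$ for the ``$f$-derivative'' part, provided the weight $e^{-k\phi}$ is comparable at $w$ and at the point where the Cauchy estimate picks up the maximum of $|f|$.

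This comparability is the crux, and the key elementary observation that handles the dangerous $k$-factor term as well: on the cube $U_k(w)$ the oscillation of $k\phi$ is bounded. Indeed $|k\phi(z) - k\phi(w)| \le k \cdot \|\nabla\phi\|_{L^\infty} \cdot \mathrm{diam}(U_k(w)) = k \cdot O(1) \cdot O(1/\sqrt{k}) = O(\sqrt{k})$, which is still unbounded — so even this is not immediately enough, and the right fix is to shrink to a cube of side $\eps/\sqrt{k}$ with $\eps$ small but fixed, or better, to use that we only need the derivative at the center. The cleanest route: apply the classical one-variable fact (as referenced for $\Cbb^n$ in \cite{Lindholm01}) that for $h$ holomorphic on the unit disc and $\psi$ smooth, $|\partial_\zeta(|h|^2 e^{-\psi})(0)| \lesssim (1 + \|\nabla\psi\|_\infty)\sup_{|\zeta|\le 1}|h|^2 e^{-\psi}$, then rescale $\zeta \mapsto w_j + \zeta/\sqrt{k}$ and $\psi(\zeta) = k\phi(w_j + \zeta/\sqrt{k})$. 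Under this rescaling the derivative picks up exactly one factor $\sqrt{k}$ from the chain rule, while $\|\nabla_\zeta \psi\|_\infty = \sqrt{k}\,\|\nabla\phi\|_\infty \cdot \sqrt{k}/\sqrt{k}$ — wait, $\nabla_\zeta[k\phi(w_j + \zeta/\sqrt k)] = \sqrt{k}\,\nabla\phi$, so $\|\nabla_\zeta\psi\|_\infty \lesssim \sqrt{k}$, and the bound becomes $\sqrt{k}(1 + \sqrt{k})\sup|f|^2 e^{-k\phi} \approx k \sup(\dots)$, again the bad factor. So the truly correct statement of the one-variable lemma must have the weight-gradient enter \emph{linearly inside} in a way that, after rescaling, the $\sqrt{k}$ from $\nabla\phi$ multiplies $1/\sqrt{k}$ from the shrunk domain; this is achieved by using $\psi(\zeta) = k[\phi(w_j + \zeta/\sqrt k) - \phi(w_j)]$, whose \emph{value} is $O(1)$ on the unit disc (since $\phi$ is $C^2$: the linear term contributes $O(\sqrt k \cdot 1/\sqrt k) = O(1)$ after absorbing, actually $k \cdot O(1/\sqrt k) = O(\sqrt k)$ — hmm).

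\medskip

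I expect the main obstacle to be precisely this bookkeeping of powers of $k$: one must invoke the $\Cbb^n$ sub-mean/gradient estimate in the sharp form where the \emph{supremum of the weight oscillation}, not its gradient, is what appears, so that on $U_k(w)$ one uses $\mathrm{osc}_{U_k(w)}(k\phi) = k \cdot O(1/\sqrt k) = O(\sqrt k)$ — no good either — which forces shrinking to a cube of side $O(1/\sqrt k)$ with a genuinely small constant and then using that on \emph{that} cube $\mathrm{osc}(k\phi)$ is bounded by a \emph{fixed} constant (choose the constant after fixing $\|\nabla\phi\|_\infty$, so $k \cdot c/\sqrt k \cdot 1/\sqrt k$... no). The honest resolution, which I would commit to in the write-up, is: write $\partial_{z_j}(|f|^2 e^{-k\phi})(w) = e^{-k\phi(w)}\big[\overline{f(w)}\,\partial_{z_j}f(w) - k\,\partial_{z_j}\phi(w)\,|f(w)|^2\big]$; bound $|\partial_{z_j}f(w)| \le \sqrt{k}\sup_{U_k}|f|$ by Cauchy; and for the second term use that $f$ is holomorphic so $|f(w)|^2 = $ the value of a subharmonic function, giving $|f(w)|^2 e^{-k\phi(w)} \le e^{O(\sqrt k)}\inf$-type bounds — and then absorb the stray $k$ by noting it must be compared against a sup over a domain where $e^{-k\phi}$ has already decayed. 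I will present the argument by reducing to Lindholm's $\Cbb^n$ statement with the correct normalization and let the rescaling produce the single clean factor $\sqrt k$, flagging the oscillation control of $k\phi$ on $U_k(w)$ — bounded by $C$ for $k\ge k_0$ once the \emph{size} of the cube is taken as $c/\sqrt k$ with $c$ small depending on $\|\nabla\phi\|_\infty$ — as the one genuinely quantitative input.
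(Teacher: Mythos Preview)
The paper does not actually prove this lemma; it refers to \cite[Lemma~19(b)]{AmeOrt12} for the one-dimensional case and asserts the $n$-variable version is similar. So there is no detailed argument to compare against, but your proposal does not yet contain a working proof, and the gap is precisely the one you keep circling.

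Your repeated attempts all founder on the same point: the oscillation of $k\phi$ on the cube $U_k(w)$ is of order $k\cdot\|\nabla\phi\|_\infty\cdot 1/\sqrt{k}=O(\sqrt{k})$, not $O(1)$, and shrinking the cube to side $c/\sqrt{k}$ with $c$ small does not help (you still get $c\|\nabla\phi\|_\infty\sqrt{k}$). Likewise, in the product-rule split the term $k\,\partial_{z_j}\phi(w)\,|f(w)|^2e^{-k\phi(w)}$ genuinely carries a factor $k$, not $\sqrt{k}$, and no amount of rescaling of the \emph{same} quantities fixes this.

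The missing idea is to absorb the linear part of $\phi$ into the holomorphic function. Write the Taylor expansion
\[
\phi(z)=\phi(w)+2\,\mathrm{Re}\Big(\sum_{j} a_j(z_j-w_j)\Big)+R(z),\qquad a_j=\partial_{z_j}\phi(w),\quad R(z)=O(|z-w|^2),
\]
and set $g(z)=f(z)\,e^{-k\sum_j a_j(z_j-w_j)}$, which is holomorphic. Then
\[
|f(z)|^2 e^{-k\phi(z)}=|g(z)|^2\,e^{-k\phi(w)}\,e^{-kR(z)}.
\]
Now $kR(z)$ is bounded on $U_k(w)$ (since $|z-w|^2\lesssim 1/k$), so $e^{-kR}$ is uniformly comparable to $1$ there; and crucially $R(w)=0$, $\partial_{z_j}R(w)=0$, so differentiating at $w$ kills the weight term entirely:
\[
\partial_{z_j}\big[|f|^2e^{-k\phi}\big](w)=e^{-k\phi(w)}\,\overline{g(w)}\,\partial_{z_j}g(w).
\]
Cauchy's estimate on the disc of radius $1/\sqrt{k}$ gives $|\partial_{z_j}g(w)|\le\sqrt{k}\,\sup_{U_k(w)}|g|$, and since $|g|^2\simeq |f|^2e^{-k\phi}\cdot e^{k\phi(w)}$ on $U_k(w)$ (up to the bounded factor $e^{-kR}$), the product $e^{-k\phi(w)}|g(w)||\partial_{z_j}g(w)|$ is bounded by $C\sqrt{k}\,\sup_{U_k(w)}|f|^2e^{-k\phi}$, as required. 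This ``pluriharmonic gauge'' trick is exactly what makes the $\sqrt{k}$ (rather than $k$) appear, and it is what underlies the cited one-dimensional lemma.
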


This is proved in dimension one in \cite[Lemma~19(b)]{AmeOrt12}. The 
multi-dimensional
version above can be proved in a similar way.

If the line bundle $(L, \phi)$ is positive, the separation condition
\eqref{EqFekSep} of the Fekete array is sharp in a sense.
The following is true.

\begin{lemma}\label{LemRelDense}
If $(L, \phi)$ is positive then there is $R > 0$ not depending on $k$, 
with the following property: 
if $\Fcal_k$ is a Fekete configuration for $(L^k, k \phi)$, then any ball
$B(x,R/\sqrt{k})$, $x \in X$, contains at least one point of 
$\Fcal_k$.
\end{lemma}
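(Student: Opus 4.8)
The plan is to argue by contradiction, exploiting the extremal property of Fekete configurations together with the exponential off-diagonal decay of the Bergman kernel. Suppose the claim fails: then along a subsequence of $k$'s there is a point $x_k\in X$ such that $B(x_k, R_k/\sqrt{k})$ contains no point of $\Fcal_k$, where $R_k\to\infty$. The key observation is that a Fekete configuration maximizes $|\det(s_i(x_j))|_\phi$ over all $N$-tuples, so replacing any one Fekete point by an arbitrary competitor point cannot increase this determinant. I would use this to test against the ``hole'' at $x_k$.

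Concretely, fix $k$ in the subsequence, let $\Fcal_k=\{x_1,\dots,x_N\}$ and let $\ell_1,\dots,\ell_N\in\HoLk$ be the Lagrange sections from \eqref{EqLagrangeDelta}--\eqref{EqLagrangeBound}. For the candidate point $x_k$, consider for each $j$ the configuration obtained from $\Fcal_k$ by moving $x_j$ to $x_k$; the extremality of $\Fcal_k$ gives an upper bound on the resulting determinant, and expanding the determinant along the modified row shows that $\sum_j |\ell_j(x_k)|^2 \le 1$ — equivalently, in terms of the normalized reproducing kernel (the Bergman kernel) for the span, the Christoffel function controls the competitor. More useful is the dual formulation: since the $\ell_j$ form a basis and $\sup_X|\ell_j|=1$, one shows $|\Pi_k(x_k,x_k)|$ is comparable to $\sum_j |\Phi_{x_j}(x_k)|^2/(\text{normalizations})$, and then invoke \eqref{EstimateOnDiagonal} and \eqref{EstimateOffDiagonal}: the left side is $\gtrsim k^n$, while the right side, being a sum over points $x_j$ all at distance $\ge R_k/\sqrt{k}$ from $x_k$, is bounded by
\[
k^n \sum_{j} \exp\!\big(-c\sqrt{k}\,d(x_k,x_j)\big).
\]
By the separation \eqref{EqFekSep}, the points $x_j$ are $\gtrsim 1/\sqrt{k}$-separated, so a standard packing/geometric-series argument (group them in dyadic annuli around $x_k$, each annulus of radius $\sim 2^m R_k/\sqrt{k}$ holding $\lesssim (2^m R_k)^{2n}$ points) bounds this sum by $C\,e^{-cR_k}$, which tends to $0$. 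This contradicts the lower bound $\gtrsim k^n$, and the contradiction proves the lemma for a suitable fixed finite $R$.

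The main obstacle is making precise the quantitative link between the Fekete extremality and the Bergman function at the hole $x_k$. The cleanest route is: express the Bergman kernel $\Pi_k$ in the basis $\{\ell_j\}$ after Gram–Schmidt, or more directly note that for any unit section $s\in\HoLk$ with $s=\sum_j c_j\ell_j$ and $c_j=\langle s(x_j),\ell_j(x_j)\rangle$, evaluating at $x_k$ gives $|s(x_k)|\le \sum_j |c_j|\,|\ell_j(x_k)| \le \big(\sum_j |s(x_j)|^2\big)^{1/2}\big(\sum_j|\ell_j(x_k)|^2\big)^{1/2}$; choosing $s=\Phi_{x_k}/|\Pi_k(x_k,x_k)|^{1/2}$ (the peak section at $x_k$) makes the left side $|\Pi_k(x_k,x_k)|^{1/2}\gtrsim k^{n/2}$, while $|s(x_j)|=|\Pi_k(x_k,x_j)|/|\Pi_k(x_k,x_k)|^{1/2}\lesssim k^{n/2}\exp(-c\sqrt{k}\,d(x_k,x_j))$ by \eqref{EstimateOffDiagonal} and \eqref{EstimateOnDiagonal}, and $\sum_j|\ell_j(x_k)|^2\le N\lesssim k^n$ crudely — though one should instead get a better bound on $\sum_j|\ell_j(x_k)|^2$, or avoid the factor $N$ entirely. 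The slicker argument is to directly apply extremality: from $\sup_X|\ell_j|=1$ one deduces $|\ell_j(x_k)|\le 1$, but we need summable decay, so I would instead bound $|\ell_j(x_k)|$ using that $|\ell_j|^2 e^{-k\phi}$ is subharmonic-like and peaks near $x_j$, giving $|\ell_j(x_k)|\lesssim \exp(-c\sqrt{k}\,d(x_j,x_k))$ via the off-diagonal Bergman estimate applied to $\ell_j$ (which lies in the span and reproduces against $\Pi_k$). Combining, $k^{n/2}\lesssim |s(x_k)|\cdot |\Pi_k(x_k,x_k)|^{1/2}$ forces $1\lesssim \sum_j \exp(-c\sqrt k\,d(x_k,x_j))\lesssim e^{-cR_k}\to 0$, the desired contradiction.

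Thus the skeleton is: (1) assume the conclusion fails along a subsequence with radii $R_k\to\infty$; (2) take the peak section $\Phi_{x_k}$ at the hole and normalize; (3) expand it in the Lagrange basis $\{\ell_j\}$ and estimate the coefficients and the $|\ell_j(x_k)|$ using the Bergman kernel bounds \eqref{EstimateOnDiagonal}, \eqref{EstimateOffDiagonal}, plus separation \eqref{EqFekSep}; (4) run the dyadic-annulus packing sum to get $\sum_j \exp(-c\sqrt k\, d(x_k,x_j)) \lesssim e^{-cR_k}$; (5) derive the contradiction with $|\Pi_k(x_k,x_k)|\gtrsim k^n$. The only genuinely delicate point, as noted, is step (3) — getting \emph{exponential} (hence summable) control on $|\ell_j(x_k)|$ rather than just the trivial bound $|\ell_j(x_k)|\le 1$ — and for that the Lagrange sections must be compared with the Bergman peak sections $\Phi_{x_j}$, using the reproducing property and \eqref{EstimateOffDiagonal}; everything else is routine compactness and geometric-series bookkeeping.
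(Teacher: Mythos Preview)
Your basic strategy is sound and does yield a valid proof, but you have misidentified the obstacle. You do \emph{not} need any decay of $|\ell_j(x_k)|$ beyond the trivial bound $|\ell_j(x_k)|\le 1$ from \eqref{EqLagrangeBound}. Drop the Cauchy--Schwarz step and use the $\ell^1$ estimate directly: expanding the normalized peak section $s=\Phi_{x_k}/|\Pi_k(x_k,x_k)|^{1/2}$ in the Lagrange basis gives
\[
|s(x_k)|\le\sum_j|s(x_j)|\,|\ell_j(x_k)|\le\sum_j|s(x_j)|.
\]
Since $|s(x_k)|=|\Pi_k(x_k,x_k)|^{1/2}\gtrsim k^{n/2}$ while $|s(x_j)|=|\Pi_k(x_k,x_j)|/|\Pi_k(x_k,x_k)|^{1/2}\lesssim k^{n/2}e^{-c\sqrt{k}\,d(x_k,x_j)}$ by \eqref{EstimateOnDiagonal}--\eqref{EstimateOffDiagonal}, one gets $1\lesssim\sum_j e^{-c\sqrt{k}\,d(x_k,x_j)}$, and your dyadic-annulus packing argument (via the separation \eqref{EqFekSep}) bounds this by $R_k^{2n}e^{-cR_k}\to 0$, the desired contradiction. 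The factor $N\sim k^n$ that worried you never enters. Conversely, the exponential decay $|\ell_j(x_k)|\lesssim e^{-c\sqrt{k}\,d(x_j,x_k)}$ that you propose to establish is \emph{not} available: the Lagrange sections are only known to satisfy $\sup_X|\ell_j|=1$, and there is no mechanism to compare them pointwise to Bergman peak sections. This is precisely why, in the proof of Lemma~\ref{LemFekSamInt}, the paper has to manufacture auxiliary sections $Q_j$ with built-in localization rather than work with the $\ell_j$ alone.

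Your route is also genuinely different from the paper's. The paper does not give a direct argument; it simply notes that Lemma~\ref{LemRelDense} follows from Lemma~\ref{LemFekSamInt} (a slight shift of the Fekete array is a sampling array) together with Theorem~\ref{necessary_conditions} (sampling arrays satisfy a lower density bound, hence are relatively dense). That route requires the Landau concentration-operator machinery of Section~\ref{Landconc} and the error estimate of Lemma~\ref{lemma_square_kernel}. Your argument, once streamlined as above, is more self-contained: it needs only the Lagrange basis, the Bergman kernel asymptotics, and the separation of Fekete points.
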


This result may be deduced from Theorem~\ref{necessary_conditions} and
Lemma~\ref{LemFekSamInt} below. However, as it will not be 
used later on, we do not present the details of the proof. We merely state
it to show that the Fekete points $\Fcal_k$ are roughly spread away from
each other at a distance $1/\sqrt{k}$.

\section{Sampling and Interpolation arrays}\label{secsampintFek}

\subsection{}
In this section we relate the Fekete arrays to the sampling and interpolation
arrays. We will show that if the line bundle $(L, \phi)$ is positive,
then by a ``small perturbation'' of the Fekete array one obtains a
sampling or interpolation array for $(L, \phi)$.

\begin{definition}
Let $k$ be a positive integer, and $\Lambda_k$ be a finite set of points in $X$.
We say that $\Lambda_k$ is a \emph{sampling set at level $k$ with sampling
constants $A,B$} if the inequalities
\begin{equation}\label{DefSampling}
A k^{-n} \sum_{\lambda\in \Lambda_k} |s(\lambda)|^2
\leq \int_X \big | s(x)\big|^2  \leq B k^{-n}
\sum_{\lambda\in \Lambda_k} |s(\lambda)|^2
\end{equation}
hold for any section $s\in \HoLk$.
We say that $\Lambda_k$ is an \emph{interpolation set at level $k$ with
interpolation constant $C$} if for any set of values
$\{v_\lambda\}_{\lambda\in \Lambda_k}$, where each $v_\lambda$ is an
element of the fiber of $\lambda$ in $\Lcal^k$, there is a section
$s\in \HoLk$ such that $s(\lambda)=v_\lambda$ $(\lambda\in \Lambda_k)$
and
\begin{equation}\label{DefInterpEst}
\int_X \big | s(x) \big |^2  \leq C k^{-n}
\sum_{\lambda \in\Lambda_k} |v_\lambda|^2.
\end{equation}
\end{definition}

\begin{definition}
Let $\Lambda=\{\Lambda_k\}$ be an array of points, i.e.\ a sequence
of finite sets $\Lambda_k$ in $X$. We call $\Lambda$ a \emph{sampling array}
if there are $k_0$ and positive constants $A,B$ not depending on $k$,
such that $\Lambda_k$ is a sampling set at each level $k \geq k_0$
with sampling constants $A,B$.
Analogously, $\Lambda$ is an \emph{interpolation array}
if there are $k_0$ and a positive constants $C$ not depending on $k$,
such that $\Lambda_k$ is an interpolation
set at each level $k \geq k_0$ with interpolation constant $C$.
\end{definition}

\begin{lemma}\label{LemFekSamInt}
Suppose that $(L, \phi)$ is positive.
Let $k$ be a positive integer, and $\eps$ be a number
satisfying $1/k \lesssim \varepsilon \lesssim 1$. If we define
\[
\Lambda_k := \Fcal_{(1+\varepsilon)k}
\]
then $\Lambda_k$ is a sampling set at level $k$ with sampling constants
$A,B$ such that $1 \lesssim A < B \lesssim \varepsilon^{-2n}$. 
On the other hand, if
\[
\Lambda_k := \Fcal_{(1-\varepsilon)k}
\]
then it is an interpolation set at level $k$ with interpolation constant $C$
satisfying $C \lesssim \varepsilon^{-2n}$.
\end{lemma}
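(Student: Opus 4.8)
The plan is to exploit the "Lagrange sections" attached to a Fekete configuration, namely sections $\ell_1,\dots,\ell_M \in H^0(L^m)$ with $|\ell_j(y_i)| = \delta_{ij}$ and $\sup_X |\ell_j| = 1$, where $y_1,\dots,y_M$ are the points of $\mathcal F_m$ (here $m = (1\pm\varepsilon)k$). The essential point is a comparison of norms on $L^k$ versus $L^m$: a section $s \in H^0(L^k)$ may be regarded, after multiplying by a suitable fixed section of $L^{m-k}$ (or by working with the metrics $k\phi$ vs.\ $m\phi$), as producing a section of $L^m$ whose pointwise norm is that of $s$ weighted by $e^{-\frac12(m-k)\phi}$; the comparison is governed by $(m-k)/k \asymp \varepsilon$. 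I would phrase this through the Bergman kernel estimates \eqref{EstimateOnDiagonal}--\eqref{EstimateOffDiagonal} rather than by introducing an auxiliary section.

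First I would prove the \textbf{sampling upper bound} $\int_X |s|^2 \lesssim \varepsilon^{-2n} k^{-n} \sum_{\lambda} |s(\lambda)|^2$ for $\Lambda_k = \mathcal F_m$ with $m = (1+\varepsilon)k$. Since $\mathcal F_m$ is separated at scale $1/\sqrt m \asymp 1/\sqrt k$ (Lemma~\ref{LemFekSeparated}), and since $(L,\phi)$ positive gives, via Lemma~\ref{LemRelDense}, that every ball $B(x, R/\sqrt m)$ meets $\mathcal F_m$, the set $\mathcal F_m$ is a separated \emph{and relatively dense} array at scale $1/\sqrt k$. For such an array the sampling upper bound is standard: cover $X$ by balls $B(\lambda, R/\sqrt k)$, $\lambda \in \Lambda_k$, apply the sub-mean value inequality \eqref{InEqSubMean} on each ball to bound $\int_{B(\lambda,R/\sqrt k)} |s|^2$ by a constant times $k^{-n}|s(\lambda)|^2$, where tracking the constant in \eqref{InEqSubMean} with $\delta \asymp \varepsilon^{1/2}$-type scaling (more precisely, the ratio $r\sqrt k$ for the covering radius) yields the $\varepsilon^{-2n}$ factor. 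Summing over $\lambda$ gives the claim; the lower bound $A \gtrsim 1$ is then just the Plancherel--P\'olya inequality of Lemma~\ref{LemmaPlancherelPolya} together with the reverse estimate.

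Next, the \textbf{interpolation bound} for $\Lambda_k = \mathcal F_m$ with $m = (1-\varepsilon)k$: given values $\{v_\lambda\}$, I use the Lagrange sections $\ell_\lambda \in H^0(L^m)$ of the Fekete configuration $\mathcal F_m$ and set $s := \sum_\lambda c_\lambda\, \ell_\lambda \otimes \sigma$, where $\sigma$ is a section promoting $L^m$ to $L^k$ and $c_\lambda$ is chosen so that $s(\lambda) = v_\lambda$ (this uses $|\ell_\lambda(\lambda)| = 1$ and nonvanishing of $\det M$). The interpolation property holds automatically from $|\ell_\lambda(\mu)| = \delta_{\lambda\mu}$; the work is the norm estimate. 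Writing $\int_X |s|^2 = \int_X |\sum_\lambda c_\lambda \ell_\lambda|^2 |\sigma|^2$ and using $\sup_X|\ell_\lambda| \le 1$ together with the near-orthogonality of the $\ell_\lambda$ (their supports are effectively concentrated near the separated points $\lambda$, quantified by the off-diagonal decay \eqref{EstimateOffDiagonal} applied at level $m$, since $\ell_\lambda$ is essentially the normalized Bergman section $\Phi_\lambda$ of Lemma~\ref{LemBergmanSec}), one gets $\int_X |s|^2 \lesssim \sum_\lambda |c_\lambda|^2 \int_X |\ell_\lambda|^2 |\sigma|^2$. Here $\int_X |\ell_\lambda|^2|\sigma|^2 \lesssim (\dim H^0(L^m))^{-1} \asymp m^{-n} \asymp k^{-n}$ up to the weight discrepancy between $m\phi$ and $k\phi$, which contributes the factor $\varepsilon^{-2n}$ through a sub-mean-value comparison on balls of radius $\asymp \varepsilon^{-1/2}/\sqrt k$; combined with $|c_\lambda| \asymp |v_\lambda|$ this is the desired $C \lesssim \varepsilon^{-2n} k^{-n}\sum_\lambda |v_\lambda|^2$.

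\textbf{The main obstacle} I expect is making precise the passage between powers $L^k$ and $L^{(1\pm\varepsilon)k}$ with the correct $\varepsilon^{-2n}$ dependence of the constants, i.e.\ controlling uniformly in the regime $1/k \lesssim \varepsilon \lesssim 1$ how the metrics $k\phi$ and $(1\pm\varepsilon)k\phi$ differ: one must show that the discrepancy factor $e^{\pm\frac12\varepsilon k\,\phi(x)}$ over a ball of the relevant radius $\asymp 1/\sqrt k$ is bounded by a constant depending polynomially on $\varepsilon^{-1}$ (of order $\varepsilon^{-2n}$), which is exactly where the sub-mean-value property \eqref{InEqSubMean} with a ball of radius $\asymp 1/(\varepsilon\sqrt k)$, or rather a careful choice of the covering scale, enters. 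The rest — separation, relative density, near-orthogonality of Lagrange sections — follows from the results already established in the excerpt.
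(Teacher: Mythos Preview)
Your proposal has two genuine gaps, one in each half.

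\textbf{Sampling.} The covering argument you sketch is backwards. The sub-mean-value inequality \eqref{InEqSubMean} bounds a \emph{point value} by an \emph{integral}, not the other way round: it gives $|s(\lambda)|^2 \lesssim k^n \int_{B(\lambda,R/\sqrt{k})} |s|^2$, which is exactly the Plancherel--P\'olya direction (the bound $A\gtrsim 1$), not the sampling upper bound you want. Relative density of $\mathcal F_m$ does not let you control $\int_{B(\lambda,R/\sqrt{k})}|s|^2$ by $|s(\lambda)|^2$, since $s$ may well vanish at $\lambda$. (The paper in fact states that Lemma~\ref{LemRelDense} is not used.) The paper's argument is quite different: for each $x$ one tensors $s\in H^0(L^k)$ with $\big[\Phi_x^{(\varepsilon/2)k}/|\Pi_{(\varepsilon/2)k}(x,x)|\big]^2$ to produce a section of $H^0(L^{(1+\varepsilon)k})$, expands that section in the Lagrange basis $\{\ell_j\}$ of the Fekete configuration, and then uses Cauchy--Schwarz together with the two estimates $\sup_j\int_X|Q_j|\lesssim(\varepsilon k)^{-n}$ and $\sup_x\sum_j|Q_j(x)|\lesssim\varepsilon^{-n}$ for the resulting weights $Q_j$.

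\textbf{Interpolation.} Here your outline is closer, but the step ``the $\ell_\lambda$ are essentially normalized Bergman sections, hence concentrated near $\lambda$'' is unjustified and in general false: all one knows about a Lagrange section is $|\ell_\lambda(x_i)|=\delta_{\lambda i}$ and $\sup_X|\ell_\lambda|=1$, with no off-diagonal decay. The paper's remedy is precisely to \emph{force} localization by tensoring each $\ell_\lambda\in H^0(L^{(1-\varepsilon)k})$ with $\big[\Phi_{x_\lambda}^{(\varepsilon/2)k}/|\Pi_{(\varepsilon/2)k}(x_\lambda,x_\lambda)|\big]^2$, landing in $H^0(L^k)$; this simultaneously fixes the power and supplies the concentration, and the $\varepsilon^{-2n}$ constant then falls out of the same two $Q_j$-estimates. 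Your unspecified ``section $\sigma$'' and the heuristic with the factor $e^{\pm\frac12\varepsilon k\phi}$ do not make sense globally ($\phi$ is only a collection of local potentials), and cannot substitute for this construction. Note also that the paper's proof uses only the on-diagonal asymptotic \eqref{EstimateOnDiagonal} and the identity \eqref{EqBergmanDiag}, not the off-diagonal decay \eqref{EstimateOffDiagonal} you invoke.
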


We must provide a clarification concerning the statement of the theorem:
we have written $\Fcal_{(1 \pm \eps)k}$ as if the numbers 
$(1\pm \eps)k$ were integers. In practice, the reader should replace
these numbers by an integer approximation. The same is true in other
parts of the paper below, where we shall keep using such notation.

It follows from Lemma \ref{LemFekSamInt}
that by a ``small perturbation'' of the Fekete array one
obtains a sampling or interpolation array for $(L, \phi)$.

\begin{corollary}\label{cor_pert_fek}
Let $(L, \phi)$ be positive, and $\eps > 0$ be fixed. Then
\begin{enumerate}
\addtolength{\itemsep}{4pt}
\item[{\rm (i)}]
$\{\Fcal_{(1+\varepsilon)k}\}$ is a sampling array for $(L, \phi)$;
\item[{\rm (ii)}]
$\{\Fcal_{(1-\varepsilon)k}\}$ is an interpolation array for $(L, \phi)$.
\end{enumerate}
\end{corollary}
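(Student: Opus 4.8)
The plan is to read off Corollary~\ref{cor_pert_fek} directly from Lemma~\ref{LemFekSamInt}; the only thing that really needs to be observed is that once $\eps>0$ is held fixed, the sampling and interpolation constants furnished by that lemma no longer depend on $k$, which is precisely what the definition of a sampling (resp.\ interpolation) array demands.

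For part (i), I would fix $\eps>0$, assuming if necessary that it is small enough to satisfy the hypothesis $\eps\lesssim 1$ of Lemma~\ref{LemFekSamInt} (this is the regime of interest and costs nothing). Choosing $k_0=k_0(\eps)$ large enough that $1/k\lesssim\eps$ for all $k\geq k_0$, Lemma~\ref{LemFekSamInt} applies at every level $k\geq k_0$ and shows that $\Lambda_k:=\Fcal_{(1+\eps)k}$ is a sampling set at level $k$ with sampling constants $A,B$ satisfying $1\lesssim A<B\lesssim\eps^{-2n}$; since $\eps$ is fixed, these $A,B$ are independent of $k$. This is exactly the statement that $\{\Fcal_{(1+\eps)k}\}$ is a sampling array for $(L,\phi)$. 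Part (ii) is identical: with $\Lambda_k:=\Fcal_{(1-\eps)k}$ and $k\geq k_0(\eps)$, Lemma~\ref{LemFekSamInt} produces an interpolation constant $C\lesssim\eps^{-2n}$ which, $\eps$ being fixed, does not depend on $k$, so $\{\Fcal_{(1-\eps)k}\}$ is an interpolation array.

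I would also record that each of these arrays is in fact separated, so that the conclusion matches the setup of the introduction as well as the definitions of Section~\ref{secsampintFek}: by Lemma~\ref{LemFekSeparated} two distinct points of $\Fcal_{(1\pm\eps)k}$ lie at distance $\gtrsim((1\pm\eps)k)^{-1/2}$, which is bounded below by a constant times $k^{-1/2}$ uniformly in $k$ because $(1\pm\eps)k$ is comparable to $k$. I do not expect any genuine obstacle here: the entire analytic content is contained in Lemma~\ref{LemFekSamInt}, and the only point requiring a word of care is to keep $\eps$ inside the admissible range of that lemma, which is automatic once $\eps$ is a fixed positive number.
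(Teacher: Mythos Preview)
Your proposal is correct and matches the paper's approach exactly: the paper simply states that Corollary~\ref{cor_pert_fek} follows from Lemma~\ref{LemFekSamInt} and gives no further proof, so the explicit spelling-out you provide (fixing $\eps$, taking $k_0$ large, and noting the resulting constants are $k$-independent) is precisely the intended argument. Your additional remark on separation via Lemma~\ref{LemFekSeparated} is a welcome clarification that the paper leaves implicit.
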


The rest of this section is devoted to the proof of Lemma \ref{LemFekSamInt}.

\subsection{}
We start with the interpolation part of Lemma \ref{LemFekSamInt}.
We fix $k$ and $\eps$ satisfying $1/k \lesssim \varepsilon \lesssim 1$
and define the set $\Lambda_k = \Fcal_{(1-\varepsilon)k}$. We will prove that
$\Lambda_k$ is an interpolation set at level $k$ with interpolation
constant $C$ satisfying $C \lesssim \varepsilon^{-2n}$.

Denote by $\{x_j\}$ the elements of the finite
set $\Lambda_k$. Since the points $\{x_j\}$ form a Fekete configuration for the line
bundle $L^{(1-\varepsilon)k}$, they have associated Lagrange sections
$\ell_j$ (see Section \ref{SecFeketeProperties}). 
The sections $\ell_j$ are suitable
for solving the interpolation problem with nodes $x_j$, but we also need
an estimate for the $L^2$ norm of the solution. For this reason
we need to improve the localization of $\ell_j$ around the point $x_j$.
We therefore define the auxiliary sections 
\[
Q_j (x):= \ell_j (x) \otimes \left [
\frac{\Phi_{x_j}^{(\varepsilon/2)k}(x)}{|\Pi_{(\varepsilon/2)k}(x_j,x_j)|}\right ]^2
\in \HoLk,
\]
where $\Phi_y^{(\varepsilon/2)k}$ denotes a holomorphic section
to $\Lcal^{(\varepsilon/2)k}$ such that
\begin{equation}\label{EstimateQjPhi}
| \Phi_{y}^{(\varepsilon/2)k} (x) | = | \Pi_{(\varepsilon/2)k}
(x,y)|, \quad x\in X.
\end{equation}
The existence of such a section is guaranteed by Lemma \ref{LemBergmanSec}. 

We have thus constructed sections $Q_j$ in $\HoLk$ which are
associated to the points $\{x_j\}$.
Similar to the Lagrange sections, the sections $Q_j$ satisfy
\begin{equation}\label{EqQjDelta}
| Q_j (x_i) | =\delta_{ij},
\end{equation}
as follows from \eqref{EqBergmanDiag} and \eqref{EqLagrangeDelta}.
We will also need the additional estimates
\begin{equation}\label{EstimateQjInt}
\sup_j \int_X |Q_j(x)| \lesssim (\eps k)^{-n},
\end{equation}
and
\begin{equation}\label{EstimateQjSum}
\sup_{x \in X} \sum_j |Q_j(x)| \lesssim \eps^{-n},
\end{equation}
that will be proved now.
The inequality \eqref{EstimateQjInt} follows directly from
\eqref{EqBergmanDiag}, \eqref{EstimateOnDiagonal} and
\eqref{EqLagrangeBound}.
To prove \eqref{EstimateQjSum} we recall that Fekete points are 
separated (Lemma \ref{LemFekSeparated}), and hence
\[
d(x_i, x_j) \gtrsim \frac1{\sqrt{(1-\eps)k}} \gtrsim \frac{\delta}{\sqrt{(\eps/2) k}}
\]
with $\delta = \sqrt{\eps}$. Thus an application of the Plancherel-P\'olya inequality
(Lemma \ref{LemmaPlancherelPolya}) to the section
$\Phi_{x}^{(\eps/2) k}$ and to the set of points $\{x_j\}$ yields
\[
\sum_j |Q_j(x)| \lesssim (\eps k)^{-2n} \sum_j |\Phi_{x}^{(\eps/2) k}(x_j)|^2
\lesssim \eps^{-2n} k^{-n} \int_X |\Phi_{x}^{(\eps/2) k}|^2
\lesssim \eps^{-n},
\]
where in these inequalities we have used 
\eqref{EqBergmanSymm}, \eqref{EqBergmanDiag}, \eqref{EstimateOnDiagonal}
and \eqref{EstimateQjPhi}.

We are now ready to solve the interpolation problem with estimate.
Suppose that we are given a set of values $\{v_j\}$,
where each $v_j$ is an element of the fiber of $x_j$ in $\Lcal^k$.
We will construct a solution $Q(x)$ to the interpolation problem,
i.e.\ a section $Q \in \HoLk$ such that
$Q(x_j)=v_j$ for all $j$. The solution is defined as a linear
combination of the $Q_j$,
\begin{equation*}
Q(x)=\sum_j c_j Q_j (x),
\end{equation*}
with the coefficients $c_j$ given by $c_j= \langle v_j, Q_j(x_j)\rangle$.
This choice of coefficients and the property \eqref{EqQjDelta}
imply that $Q(x)$ is indeed a solution to the interpolation problem.

It remains to show that the solution $Q(x)$ is 
bounded in $\Lcal^2$ with the estimate
\begin{equation}\label{EqQEstimateNeed}
\int_X \big | Q(x)\big |^2 \lesssim \eps^{-2n} k^{-n} \sum_j |v_j|^2.
\end{equation}
Indeed, by the Cauchy-Schwartz inequality and \eqref{EstimateQjSum} we have
\begin{equation}\label{IneqQjCauchySchw}
|Q(x)|^2 \leq \Big( \sum_j |c_j|^2 |Q_j(x)| \Big) \Big( \sum_j |Q_j(x)| \Big)
\lesssim \eps^{-n} \sum_j |c_j|^2 |Q_j(x)|.
\end{equation}
Integrating over $X$ and using \eqref{EstimateQjInt} yields
\[
\int_X |Q(x)|^2 \lesssim \eps^{-n} \sum_j |c_j|^2 \int_X |Q_j(x)|
\lesssim \eps^{-2n} k^{-n} \sum_j |c_j|^2,
\]
and since $|c_j| = |v_j|$ this gives \eqref{EqQEstimateNeed}.

This complete the proof of the interpolation part of Lemma \ref{LemFekSamInt}.

\subsection{}
We continue to the proof of the sampling part of Lemma \ref{LemFekSamInt}.
In this case we are dealing with the set
$\Lambda_k = \Fcal_{(1+\varepsilon)k}$, and must prove that
it is a sampling set at level $k$ with sampling constants
$A,B$ such that $1 \lesssim A < B \lesssim \varepsilon^{-2n}$. 

Again we denote by $\{x_j\}$ the elements of $\Lambda_k$.
We will prove the sampling inequality
\begin{equation}\label{EqSampEstimateNeed}
k^{-n} \sum_{j} |s(x_j)|^2 \lesssim
\int_X \big | s(x)\big|^2 \lesssim \eps^{-2n} k^{-n} 
\sum_{j} |s(x_j)|^2
\end{equation}
for any section $s \in \HoLk$.
The left hand side of \eqref{EqSampEstimateNeed} is a consequence
of the Plancherel-P\'olya inequality 
(Lemma \ref{LemmaPlancherelPolya}) and the separation condition
\[
d(x_i, x_j) \gtrsim \frac1{\sqrt{(1+\eps)k}}
\gtrsim \frac1{\sqrt{k}}
\]
ensured by Lemma \ref{LemFekSeparated}.

The proof of the right hand side of \eqref{EqSampEstimateNeed} is
similar to the interpolation part. Fix $x\in X$ and define
\[
P_x (y):= s(y) \otimes \left [
\frac{\Phi_{x}^{(\varepsilon/2)k}(y)}{|\Pi_{(\varepsilon/2)k}(x,x)|}\right ]^2
\in H^0(\Lcal^{(1+\varepsilon)k}).
\]
The space $H^0(\Lcal^{(1+\varepsilon)k})$ has a basis
of Lagrange sections $\ell_j$ associated to the Fekete
points $\{x_j\}$, so we may expand $P_x$ in terms of this basis. We get
\begin{equation*}
P_x(y)=\sum_j \big \langle P_x(x_j), \ell_j(x_j) \big \rangle\, \ell_j(y).
\end{equation*}
In particular, if $y = x$ this implies
\begin{equation*}
| s(x) | = |P_x (x) | \leq \sum_j |P_x(x_j )|
= \sum_j |s(x_j )|\, |Q_j(x)|,
\end{equation*}
where now we define
\[
Q_j(x) := \left [
\frac{\Phi_{x_j}^{(\varepsilon/2)k}(x)}{|\Pi_{(\varepsilon/2)k}(x,x)|}\right ]^2.
\]
The estimates \eqref{EstimateQjInt}, \eqref{EstimateQjSum} are valid
in this case as well, and can be proved in the same way. We may therefore
continue as in \eqref{IneqQjCauchySchw}. We obtain
\begin{equation}
|s(x)|^2 \leq \Big( \sum_j |s(x_j)|^2 |Q_j(x)| \Big) \Big( \sum_j |Q_j(x)| \Big)
\lesssim \eps^{-n} \sum_j |s(x_j)|^2 |Q_j(x)|,
\end{equation}
and integrating over $X$ yields the right hand side of \eqref{EqSampEstimateNeed}.

We have thus proved also the sampling part of Lemma \ref{LemFekSamInt}, so
the lemma is completely proved.

\begin{remark}
In the proof of Lemma \ref{LemFekSamInt} we have not used any
off-diagonal estimate such as \eqref{EstimateOffDiagonal} for
the Bergman kernel, but only the asymptotic estimate
\eqref{EstimateOnDiagonal} on the diagonal combined with
the $L^2$ equality \eqref{EqBergmanDiag}
(this is in contrast to \cite{AmeOrt12}, for example).
\end{remark}

% ==================================================================

\section{Landau's inequalities}\label{Landconc}

\subsection{}
In this section we use Landau's method \cite{Landau67}
to obtain estimates for the 
number of points of a separated sampling or interpolation array
in a ball.

Let us say that a finite set of points $\Lambda_k$ is \emph{$\delta$-separated
at level $k$} if 
\begin{equation}\label{SepDelta}
d(x,y) \geq \frac{\delta}{\sqrt{k}}, \quad x,y \in \Lambda_k, \quad x \neq y.
\end{equation}

Our goal is to prove the following two statements.

\begin{lemma}\label{lem_landau_sampling}
Let $\Lambda_k$ be a $\delta$-separated sampling set at level $k$
with sampling constants $A,B$. Then for any $z \in X$ and $r >0$,
\begin{equation}
\label{landau_sampling}
\# \big( \Lambda_k \cap B \big( z, \tfrac{r+\delta}{\sqrt{k}} \,\big ) \big)
\geq \int_\Omega |\Pi_k(x,x)| - M \iint_{\Omega\times\Omega^c}
\big |\Pi_k(x,y) \big |^2,
\end{equation}
where $\Omega=B(z, \frac{r}{\sqrt{k}})$, and the constant
$M$ is bounded by the sampling constant $B$
times a constant which may depend on $\delta$ but does not depend
on $k,z,r$.
\end{lemma}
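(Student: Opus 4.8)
The plan is to compare the Bergman projection with the ``concentration operator'' obtained by first multiplying by the indicator of a ball and then reprojecting, and to estimate its trace in two ways. Let $\Omega = B(z,r/\sqrt k)$ and let $P_k\colon L^2(X,L^k)\to \HoLk$ be the Bergman projection, whose Schwartz kernel is $\Pi_k(x,y)$. Define the concentration operator $T = T_\Omega = P_k M_{\1_\Omega} P_k$ acting on $\HoLk$, where $M_{\1_\Omega}$ is multiplication by the indicator function of $\Omega$. This is a positive self-adjoint operator on the finite-dimensional space $\HoLk$ with $0\le T\le I$, and its trace is
\[
\operatorname{tr} T = \int_\Omega |\Pi_k(x,x)|,
\]
as one sees by expanding in an orthonormal basis and using \eqref{EqBergmanBasis} (or rather its $L^k$ analogue). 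The first step is to record this identity and to note the elementary bound $\operatorname{tr}(T - T^2) = \sum (\mu_i - \mu_i^2) \ge 0$; in fact one computes
\[
\operatorname{tr}(T - T^2) = \int_\Omega |\Pi_k(x,x)| - \iint_{\Omega\times\Omega} |\Pi_k(x,y)|^2
= \iint_{\Omega\times\Omega^c} |\Pi_k(x,y)|^2,
\]
using \eqref{EqBergmanDiag}. This quantity controls how far $T$ is from a true projection, and it is exactly the second term appearing on the right-hand side of \eqref{landau_sampling}.

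The second step is the key idea of Landau, adapted to this setting: use the sampling inequality to show that, modulo a controlled error, every section in the range of $T$ which is ``essentially concentrated in $\Omega$'' is already determined by its values on $\Lambda_k \cap B(z,(r+\delta)/\sqrt k)$, so the effective rank of $T$ cannot exceed $\# (\Lambda_k \cap B(z,(r+\delta)/\sqrt k))$. Concretely, I would let $n_0 = \#(\Lambda_k\cap B(z,\frac{r+\delta}{\sqrt k}))$ and consider the subspace $V\subset\HoLk$ of sections vanishing at all these points; then $\dim V \ge \dim\HoLk - n_0$, and it suffices to show $\langle Ts,s\rangle \le \eps\|s\|^2$ for all $s\in V$ with $\eps$ small, because then the number of eigenvalues of $T$ exceeding $\eps$ is at most $n_0$, hence $\operatorname{tr} T \le n_0 + \eps\cdot\dim\HoLk + (\text{tail})$, and a cleaner bookkeeping gives $n_0 \ge \operatorname{tr} T - \operatorname{tr}(T-T^2)/(\text{const})$. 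For $s\in V$ we have $\langle Ts,s\rangle = \int_\Omega |s(x)|^2$, and we must bound this by a multiple of $\iint_{\Omega\times\Omega^c}|\Pi_k(x,y)|^2 \cdot \|s\|^2$. Here is where the sampling inequality enters: since $s$ vanishes on all points of $\Lambda_k$ inside $B(z,\frac{r+\delta}{\sqrt k})$, the lower sampling bound applied to $s$ gives $A k^{-n}\sum_{\lambda\in\Lambda_k,\ \lambda\notin B(z,(r+\delta)/\sqrt k)} |s(\lambda)|^2 \ge \int_X|s|^2 \ge \int_\Omega|s|^2$; and each such $\lambda$ lies at distance $\ge 1/\sqrt k$ from $\Omega^c{}^c = \Omega$... more precisely, at distance $\ge \delta/\sqrt k$ outside $B(z,r/\sqrt k)$, so by the sub-mean value property \eqref{InEqSubMean} applied on a ball of radius $\tfrac{\delta}{2\sqrt k}$ around $\lambda$ (which is contained in $\Omega^c$), $|s(\lambda)|^2 \lesssim_\delta k^n \int_{B(\lambda,\delta/(2\sqrt k))}|s|^2$. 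Summing over the $\delta$-separated family, and using that the balls $B(\lambda,\delta/(2\sqrt k))$ are disjoint and contained in $\Omega^c$, yields
\[
\int_\Omega |s(x)|^2 \;\lesssim_{\delta}\; B \int_{\Omega^c} |s(x)|^2.
\]

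The third step converts this self-improving inequality into the trace bound. Since $s\in\HoLk$, we have $s = P_k s$, so $s(x) = \int_X \langle s(y),\Pi_k(x,y)\rangle\,dV(y)$; splitting the integral over $\Omega$ and $\Omega^c$ and using Cauchy--Schwarz together with \eqref{EqBergmanDiag} gives, for $x\in\Omega^c$,
\[
|s(x)|^2 \le 2\Big(\int_\Omega|s|^2\Big)\int_{\Omega}|\Pi_k(x,y)|^2\,dV(y) + 2\,|(P_k M_{\1_{\Omega^c}} P_k s)(x)|\cdot\|s\|\cdots
\]
— and integrating over $x\in\Omega^c$ and combining with the displayed inequality from step two, the term $\int_{\Omega^c}|s|^2$ can be absorbed, leaving $\int_\Omega|s|^2 \lesssim_{\delta} B \iint_{\Omega\times\Omega^c}|\Pi_k(x,y)|^2 \cdot \|s\|^2$ (after also noting $\int_{\Omega^c}|s|^2 \le \|s\|^2$). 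Thus on $V$ the quadratic form $\langle Ts,s\rangle$ is bounded by $M\,\iint_{\Omega\times\Omega^c}|\Pi_k(x,y)|^2\cdot\|s\|^2$ with $M \lesssim_\delta B$. By the min-max principle, at most $n_0 = \dim\HoLk - \dim V$ eigenvalues of $T$ can exceed this bound, and for the rest $\mu_i \le M\iint_{\Omega\times\Omega^c}|\Pi_k|^2$; since also $\sum_i\mu_i(1-\mu_i) = \iint_{\Omega\times\Omega^c}|\Pi_k|^2$, a short computation (bounding $\sum\mu_i$ below by $n_0$ on the large eigenvalues minus the contribution of the small ones, and the small ones by their number times the bound) gives exactly
\[
n_0 \;\ge\; \int_\Omega |\Pi_k(x,x)| - M\iint_{\Omega\times\Omega^c}|\Pi_k(x,y)|^2,
\]
which is \eqref{landau_sampling}.

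I expect the main obstacle to be the careful handling of step two and the absorption argument in step three: one must be sure that the balls $B(\lambda,\delta/(2\sqrt k))$ around the excluded sampling points genuinely lie in $\Omega^c$ (this is why the ball on the left of \eqref{landau_sampling} has radius $(r+\delta)/\sqrt k$ rather than $r/\sqrt k$), and that the cross term in the Cauchy--Schwarz splitting of the reproducing formula can really be absorbed rather than merely bounded — this requires being slightly generous with constants and possibly iterating the inequality once. The bookkeeping translating ``few large eigenvalues'' into the stated clean inequality is routine but must be done so that the final constant $M$ depends only on $B$ and $\delta$, as claimed.
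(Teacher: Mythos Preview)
Your setup in step~1 and your step~2 are essentially the paper's argument: define the concentration operator $T = P_k M_{\1_\Omega} P_k$, compute $\operatorname{tr} T = \int_\Omega |\Pi_k(x,x)|$ and $\operatorname{tr}(T-T^2) = \iint_{\Omega\times\Omega^c}|\Pi_k|^2$, and then on the subspace $V$ of sections vanishing on $\Lambda_k \cap B(z,(r+\delta)/\sqrt k)$ use the sampling inequality plus the sub-mean-value property on the disjoint balls $B(\lambda,\delta/(2\sqrt k)) \subset \Omega^c$ to get $\int_\Omega|s|^2 \lesssim_\delta B \int_{\Omega^c}|s|^2$. (Minor slip: you invoke the ``lower'' sampling inequality with constant $A$ in the wrong direction; what you need is the right-hand inequality in \eqref{DefSampling} with constant $B$, i.e.\ $\int_X|s|^2 \leq B k^{-n}\sum_\lambda |s(\lambda)|^2$.)

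The genuine gap is step~3. You try to upgrade $\int_\Omega|s|^2 \lesssim_\delta B\int_{\Omega^c}|s|^2$ to the much stronger bound $\int_\Omega|s|^2 \lesssim_\delta B \iint_{\Omega\times\Omega^c}|\Pi_k|^2 \cdot \|s\|^2$, via a Cauchy--Schwarz splitting of the reproducing formula and an absorption argument that you yourself flag as uncertain. This stronger bound is not needed, and your sketch does not establish it: the cross term you would need to absorb is comparable to $\int_{\Omega^c}|s|^2$, not to a quantity involving $\iint_{\Omega\times\Omega^c}|\Pi_k|^2$, so there is nothing small to iterate against.

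The fix is that step~2 already finishes the proof. Rewrite your inequality as $\int_\Omega|s|^2 \leq K_\delta B(\|s\|^2 - \int_\Omega|s|^2)$, i.e.\ $\langle Ts,s\rangle \leq \gamma\|s\|^2$ on $V$ with $\gamma := 1 - (K_\delta B)^{-1} \in (0,1)$ a \emph{fixed} constant (this is the point you miss: $\gamma$ need not be small, only bounded away from $1$). By min--max, $\lambda_{n_0+1}(\Omega) \leq \gamma$, hence $n_0 \geq n_k(\Omega,\gamma)$. Now apply the elementary inequality $\1_{(\gamma,1]}(x) \geq x - \dfrac{x(1-x)}{1-\gamma}$ on $[0,1]$ to the spectrum of $T$:
\[
n_k(\Omega,\gamma) \;\geq\; \operatorname{tr} T - \frac{1}{1-\gamma}\operatorname{tr}(T-T^2)
\;=\; \int_\Omega|\Pi_k(x,x)| - \frac{1}{1-\gamma}\iint_{\Omega\times\Omega^c}|\Pi_k(x,y)|^2,
\]
which is \eqref{landau_sampling} with $M = \frac{1}{1-\gamma} = K_\delta B$. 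This is exactly how the paper proceeds; your detour through the reproducing formula is unnecessary.
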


\begin{lemma}\label{lem_landau_interpolation}
Similarly, if $\Lambda_k$ is a $\delta$-separated interpolation set at 
level $k$ with interpolation constant $C$, then for any $z \in X$ and $r >0$,
\begin{equation}
\label{landau_interpolation}
\# \big( \Lambda_k \cap B \big( z, \tfrac{r-\delta}{\sqrt{k}} \,\big ) \big)
\leq \int_\Omega |\Pi_k(x,x)| + M \iint_{\Omega\times\Omega^c}
\big |\Pi_k(x,y) \big |^2,
\end{equation}
where again $\Omega=B(z, \frac{r}{\sqrt{k}})$,
and the constant $M$ is bounded by the interpolation constant $C$
times a constant which may depend on $\delta$ but does not depend
on $k,z,r$.
\end{lemma}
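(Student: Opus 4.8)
The two lemmas are dual in spirit, so I would set up a single framework that handles both. The key object is Landau's concentration operator: for $\Omega = B(z,r/\sqrt k)$, let $T_\Omega \colon \HoLk \to \HoLk$ be the operator $T_\Omega s = P(\1_\Omega s)$, where $P$ is the Bergman projection onto $\HoLk$. Explicitly, $(T_\Omega s)(x) = \int_\Omega \langle s(y), \Pi_k(x,y)\rangle\, dV(y)$. This is a positive, self-adjoint, trace-class operator with $0 \le T_\Omega \le I$, and its trace is
\[
\operatorname{tr} T_\Omega = \int_\Omega |\Pi_k(x,x)|\, dV(x),
\]
by \eqref{EqBergmanBasis}. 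The quantity that will measure the ``error'' is $\operatorname{tr}(T_\Omega - T_\Omega^2)$, and a direct computation using the reproducing property and Fubini gives
\[
\operatorname{tr}(T_\Omega - T_\Omega^2) = \iint_{\Omega \times \Omega^c} |\Pi_k(x,y)|^2\, dV(x)\,dV(y),
\]
which is exactly the double integral appearing in both \eqref{landau_sampling} and \eqref{landau_interpolation}. So the heart of the matter is to compare $\#(\Lambda_k \cap B(z,\tfrac{r\pm\delta}{\sqrt k}))$ with $\operatorname{tr} T_\Omega$, with the discrepancy controlled by $\operatorname{tr}(T_\Omega - T_\Omega^2)$.

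**The sampling bound (Lemma \ref{lem_landau_sampling}).** Here I want a \emph{lower} bound on the number of points in the slightly enlarged ball $B(z,\tfrac{r+\delta}{\sqrt k})$. The idea is to compare the dimension of $\HoLk$ ``locally concentrated on $\Omega$'' against the number of sampling nodes in a neighborhood of $\Omega$. Write $\Lambda' = \Lambda_k \cap B(z,\tfrac{r+\delta}{\sqrt k})$ and let $n_\Omega = \#\Lambda'$. Consider the sampling operator $S\colon \HoLk \to \ell^2(\Lambda_k)$, $Ss = (k^{-n/2} s(\lambda))_{\lambda}$; by the sampling inequality, $A \le S^*S \le B$. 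The restriction to $\Lambda'$, call it $S'$, has rank $n_\Omega$. One shows that for $s$ in the range of $T_\Omega$ (i.e.\ well concentrated on $\Omega$), the ``tail'' part $\sum_{\lambda \notin \Lambda'} k^{-n}|s(\lambda)|^2$ is small: indeed for $\lambda \notin B(z,\tfrac{r+\delta}{\sqrt k})$ the ball $B(\lambda,\tfrac{\delta}{2\sqrt k})$ misses $\Omega$, so by the sub-mean value property \eqref{InEqSubMean} and $\delta$-separation,
\[
k^{-n}\!\!\sum_{\lambda \notin \Lambda'}\!|s(\lambda)|^2 \lesssim_\delta \int_{\Omega^c} |P(\1_\Omega s)|^2 + (\text{cross terms}) \lesssim_\delta \|s\|^2 - \|T_\Omega s\|^2
\]
after some care; combined with the sampling lower bound $A\|s\|^2 \le k^{-n}\sum_\lambda |s(\lambda)|^2$ this shows $S'$ restricted to $\operatorname{Ran} T_\Omega$ is essentially bounded below, hence $n_\Omega = \operatorname{rank} S' \ge \dim(\text{space where } T_\Omega \approx I)$. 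The standard Landau argument then converts this into $n_\Omega \ge \operatorname{tr} T_\Omega - M\operatorname{tr}(T_\Omega - T_\Omega^2)$ with $M \lesssim_\delta B$, which is \eqref{landau_sampling}.

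**The interpolation bound (Lemma \ref{lem_landau_interpolation}).** This is the mirror image: I want an \emph{upper} bound on the number of nodes in the shrunken ball $B(z,\tfrac{r-\delta}{\sqrt k})$. Let $\Lambda'' = \Lambda_k \cap B(z,\tfrac{r-\delta}{\sqrt k})$. Using the interpolation property one produces, for each prescribed value supported on $\Lambda''$, a section $s$ with $\|s\|^2 \le Ck^{-n}\sum|v_\lambda|^2$; applying $T_\Omega$ and using that $\lambda \in \Lambda''$ has its full sub-mean-value ball inside $\Omega$, one gets that $T_\Omega$ is nearly the identity on the corresponding $n''$-dimensional ``model space'', so that $n'' \le \dim\{s : \|T_\Omega s\| \approx \|s\|\} \le \operatorname{tr} T_\Omega + M\operatorname{tr}(T_\Omega - T_\Omega^2)$ with $M \lesssim_\delta C$. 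Concretely, one uses the elementary fact that if $V$ is a subspace of dimension $m$ on which a positive contraction $T$ satisfies $\langle Ts, s\rangle \ge (1-\eta)\|s\|^2$ for all $s \in V$, then $m \le \frac{1}{1-\eta}\operatorname{tr} T$ — or, sharper, the Landau-type inequality $m \le \operatorname{tr} T + \frac{\eta}{1-\eta}\operatorname{tr}(T - T^2)$ is not quite what one wants; rather one argues via $\operatorname{tr} T \ge \operatorname{tr}(P_V T P_V) \ge (1-\eta)m$ and $\operatorname{tr}(T - T^2) \ge \operatorname{tr}(P_V(T-T^2)P_V) \gtrsim \eta m$ to eliminate $\eta$.

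**Main obstacle.** The routine part is the operator-theoretic bookkeeping; the delicate point is the passage between the \emph{discrete} sum $k^{-n}\sum_\lambda |s(\lambda)|^2$ and the \emph{continuous} quantities $\|s\|^2$ and $\|T_\Omega s\|^2$, and in particular controlling the tail contribution from nodes \emph{outside} the (enlarged or shrunken) ball. This is exactly where the separation parameter $\delta$ enters and why the radius must be adjusted by $\delta/\sqrt k$: one needs the sub-mean-value balls $B(\lambda, \delta/\sqrt k)$ of the exterior nodes to be disjoint and to avoid $\Omega$ (or conversely for interior nodes to be engulfed by $\Omega$). Making this bound clean — i.e.\ showing the tail is $\lesssim_\delta \|(I - T_\Omega)s\|^2$ up to acceptable constants, uniformly in $k,z,r$ — is the technical crux, and it is here that the dependence of $M$ on $\delta$ (but not on $k,z,r$) gets pinned down. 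Everything else follows the template of Landau's original argument.
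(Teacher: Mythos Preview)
Your framework is exactly the paper's: the concentration operator $T_\Omega$, the identities $\operatorname{tr} T_\Omega = \int_\Omega |\Pi_k(x,x)|$ and $\operatorname{tr}(T_\Omega - T_\Omega^2) = \iint_{\Omega\times\Omega^c}|\Pi_k|^2$, and for the interpolation case the construction of an $N$-dimensional subspace $F$ (spanned by interpolants vanishing on $\Lambda_k\setminus\Lambda''$) on which $\langle T_\Omega s,s\rangle \ge \gamma\|s\|^2$ with $\gamma = 1/(KC)$, using the sub-mean-value property and the fact that the $\delta/2$-balls around points of $\Lambda''$ lie inside $\Omega$. All of this is correct and matches the paper.

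The gap is in your final operator-theoretic step. You dismiss the inequality $N \le \operatorname{tr} T + \tfrac{1}{\gamma}\operatorname{tr}(T-T^2)$ (you wrote $\tfrac{\eta}{1-\eta}$, which for $\eta=1-\gamma$ close to $1$ is essentially the same) and instead propose to use $\operatorname{tr}(T-T^2) \ge \operatorname{tr}(P_F(T-T^2)P_F) \gtrsim \eta N$. The second of these is \emph{false}: take $T$ a rank-one projection with range making angle $\theta$ with a one-dimensional $F$; then $\langle Tv,v\rangle = \cos^2\theta = 1-\eta$ on $F$, yet $T-T^2=0$ identically, so the claimed lower bound fails while $\eta N>0$. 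More generally there is no reason $T-T^2$ should be large on $F$ when $T$ is merely bounded below there.

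The route that actually works (and is the paper's) is the one you set aside. From $\langle T_\Omega s,s\rangle > \gamma\|s\|^2$ on the $N$-dimensional space $F$, the min--max principle gives $\lambda_N(\Omega)>\gamma$, hence $N\le n_k(\Omega,\gamma)$, the number of eigenvalues exceeding $\gamma$. Then the elementary pointwise inequality
\[
\1_{(\gamma,1]}(x) \le x + \frac{x(1-x)}{\gamma}, \qquad 0\le x\le 1,
\]
applied to the spectrum of $T_\Omega$ yields $n_k(\Omega,\gamma)\le \operatorname{tr} T_\Omega + \tfrac{1}{\gamma}\operatorname{tr}(T_\Omega-T_\Omega^2)$. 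Since $\tfrac{1}{\gamma}=KC\lesssim_\delta C$, this is precisely \eqref{landau_interpolation} with $M\lesssim_\delta C$. So the ``Landau-type inequality'' you rejected is exactly what one wants; the constant $\tfrac{1}{1-\eta}=\tfrac{1}{\gamma}=KC$ is the $M$ in the statement.
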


\subsection{}
Let $\Omega$ be a measurable subset of $X$. We denote by $T_\Omega$
the linear operator on $H^0(L)$ defined by
\begin{equation*}
T_\Omega (s)=P(s \cdot \1_\Omega), \quad s\in H^0(L),
\end{equation*}
where $P$ denotes the orthogonal projection from the Hilbert space of all
$L^2$ sections onto its finite-dimensional subspace $H^0(L)$. It is easy
to see that
\begin{equation*}
\big \langle T_\Omega s, s \big \rangle=\int_\Omega |s|^2,  \quad s\in H^0(L),
\end{equation*}
hence $T_\Omega$ is self-adjoint, non-negative and $\|T_\Omega\|\leq 1$.
We may therefore find an orthonormal basis $\{s_j\}$ of $H^0(L)$
consisting of eigensections,
\begin{equation*}
T_\Omega (s_j)=\lambda_j (\Omega)s_j.
\end{equation*}
The eigenvalues $\lambda_j (\Omega)$ lie between $0$ and $1$, and we
order them in a non-increasing order,
\begin{equation*}
\lambda_1(\Omega) \geq \lambda_2(\Omega) \geq \lambda_3(\Omega) \geq
\cdots \geq 0.
\end{equation*}
By using \eqref{EqBergmanBasis} with the basis of eigensections $\{s_j\}$
we can compute the trace of $T_\Omega$,
\begin{equation}\label{EqTrace}
\sum_{j\geq 1} \lambda_j (\Omega) =
\sum_{j\geq 1} \big\langle T_\Omega s_j, s_j\big\rangle =
\sum_{j\geq 1} \int_\Omega \big | s_j(x)\big |^2 =
\int_\Omega |\Pi(x,x)|.
\end{equation}
Similarly, 
\eqref{EqBergmanKernelBasis} allows us to compute the Hilbert-Schmidt norm
of $T_\Omega$ (the trace of $T_\Omega^2$) in terms of the Bergman kernel.
Indeed, 
\begin{equation*}
\big | \Pi (x,y)\big |^2=\sum_{j\geq 1} \sum_{k\geq 1}\big \langle s_j (x), s_k
(x) \big \rangle 
\overline{\big\langle s_j (y), s_k (y)\big \rangle},
\end{equation*}
hence integrating over $\Omega\times \Omega$ gives
\begin{equation}\label{EqHSNorm}
\sum_{j\geq 1} \lambda_j^2 (\Omega) =
\sum_{j,k}\big | \langle T_\Omega s_j, s_k\rangle \big |^2=
\sum_{j,k} \bigg | \int_\Omega \langle s_j,s_k\rangle \bigg |^2=
\iint_{\Omega\times\Omega} \big | \Pi (x,y) \big |^2.
\end{equation}

Using \eqref{EqTrace} and \eqref{EqHSNorm} one may obtain some 
information on the distribution of the eigenvalues. This is done
in the following lemma.

\begin{lemma}
Let $0<\gamma <1$ and denote by $n (\Omega,\gamma)$ the number of eigenvalues
$\lambda_j(\Omega)$ which are strictly greater than $\gamma$. Then we have
the lower bound
\begin{equation}\label{EigenBelow}
n(\Omega,\gamma)\geq \int_\Omega |\Pi(x,x)|
-\frac{1}{1-\gamma}\underset{\Omega\times\Omega^c}{\iint}|\Pi (x,y)|^2,
\end{equation}
and the upper bound
\begin{equation}\label{EigenAbove}
n(\Omega,\gamma)\leq \int_\Omega |\Pi(x,x)|
+\frac{1}{\gamma}\underset{\Omega\times\Omega^c}{\iint}|\Pi (x,y)|^2.
\end{equation}
\end{lemma}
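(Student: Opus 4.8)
The two bounds on $n(\Omega,\gamma)$ follow from the trace identities \eqref{EqTrace} and \eqref{EqHSNorm} by splitting the eigenvalue sums at the threshold $\gamma$ and exploiting that each $\lambda_j$ lies in $[0,1]$. The one piece of input not yet recorded is a formula for the ``off-diagonal mass'' $\iint_{\Omega\times\Omega^c}|\Pi(x,y)|^2$ in terms of the eigenvalues, so I would establish that first.

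First I would observe that
\[
\iint_{\Omega\times X}|\Pi(x,y)|^2 = \int_\Omega |\Pi(x,x)|,
\]
which is just \eqref{EqBergmanDiag} integrated over $\Omega$ (or, equivalently, it is $\sum_j \lambda_j(\Omega)$ by the same computation as in \eqref{EqTrace}). Subtracting \eqref{EqHSNorm} gives the key identity
\[
\iint_{\Omega\times\Omega^c}|\Pi(x,y)|^2 = \sum_{j\geq 1}\bigl(\lambda_j(\Omega)-\lambda_j^2(\Omega)\bigr) = \sum_{j\geq 1}\lambda_j(\Omega)\bigl(1-\lambda_j(\Omega)\bigr).
\]
Since each term is non-negative, this quantity controls both how many eigenvalues can fail to be close to $1$ and how many can fail to be close to $0$.

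For the lower bound \eqref{EigenBelow}: write $\int_\Omega|\Pi(x,x)| = \sum_j \lambda_j(\Omega) \leq n(\Omega,\gamma) + \sum_{\lambda_j(\Omega)\leq\gamma}\lambda_j(\Omega)$, using $\lambda_j\leq 1$ on the first $n(\Omega,\gamma)$ eigenvalues. For each eigenvalue with $\lambda_j(\Omega)\leq\gamma$ we have $1-\lambda_j(\Omega)\geq 1-\gamma$, so $\lambda_j(\Omega)\leq \frac{1}{1-\gamma}\lambda_j(\Omega)(1-\lambda_j(\Omega))$; summing and invoking the key identity bounds the tail sum by $\frac{1}{1-\gamma}\iint_{\Omega\times\Omega^c}|\Pi(x,y)|^2$, which rearranges to \eqref{EigenBelow}. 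For the upper bound \eqref{EigenAbove}: now $\int_\Omega|\Pi(x,x)| = \sum_j\lambda_j(\Omega) \geq \sum_{\lambda_j(\Omega)>\gamma}\lambda_j(\Omega) \geq n(\Omega,\gamma)\,\gamma - \sum_{\lambda_j(\Omega)>\gamma}\lambda_j(\Omega)(1-\lambda_j(\Omega)) \cdot 0$ — more carefully, for $\lambda_j(\Omega)>\gamma$ one has $\lambda_j(\Omega) = 1 - (1-\lambda_j(\Omega)) \geq \gamma$ trivially, but the cleaner route is $\lambda_j(\Omega) \geq \gamma$ directly for those indices, so $\sum_{\lambda_j(\Omega)>\gamma}\lambda_j(\Omega)\geq \gamma\, n(\Omega,\gamma)$; subtracting this from the trace leaves $\sum_{\lambda_j(\Omega)\leq\gamma}\lambda_j(\Omega)\geq 0$, which only gives $\int_\Omega|\Pi(x,x)|\geq \gamma\,n(\Omega,\gamma)$. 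To recover the sharper inequality with the error term one instead writes, for $\lambda_j(\Omega)>\gamma$, $\lambda_j(\Omega) \geq \gamma + \tfrac{1}{\gamma}\lambda_j(\Omega)(1-\lambda_j(\Omega))\cdot(-1)$; the correct manipulation is $\lambda_j(\Omega) = \gamma + (\lambda_j(\Omega)-\gamma)$ and then bound $n(\Omega,\gamma) \leq \tfrac1\gamma\sum_{\lambda_j(\Omega)>\gamma}\lambda_j(\Omega) \leq \tfrac1\gamma\int_\Omega|\Pi(x,x)| + \tfrac1\gamma\sum_{\lambda_j\leq\gamma}(\text{small})$ — so I will instead argue symmetrically: $n(\Omega,\gamma) - \int_\Omega|\Pi(x,x)| = \sum_{\lambda_j>\gamma}(1-\lambda_j(\Omega)) - \sum_{\lambda_j\leq\gamma}\lambda_j(\Omega)$, and on the set $\lambda_j(\Omega)>\gamma$ we have $1-\lambda_j(\Omega) \leq \tfrac1\gamma\lambda_j(\Omega)(1-\lambda_j(\Omega))$, giving \eqref{EigenAbove} after dropping the non-positive second sum.

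**Main obstacle.** There is no real obstacle here — it is a Chebyshev/Markov-type counting argument — but the one point requiring care is getting the factor right in each direction: on $\{\lambda_j\leq\gamma\}$ one compares $\lambda_j$ with $\tfrac{1}{1-\gamma}\lambda_j(1-\lambda_j)$, while on $\{\lambda_j>\gamma\}$ one compares $1-\lambda_j$ with $\tfrac1\gamma\lambda_j(1-\lambda_j)$; swapping these is the natural pitfall. Everything else is bookkeeping on the convergent eigenvalue series.
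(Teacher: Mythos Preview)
Your argument is correct and is essentially the same as the paper's: both rest on the identity $\sum_j \lambda_j(1-\lambda_j)=\iint_{\Omega\times\Omega^c}|\Pi|^2$ together with the Chebyshev-type bounds $\lambda_j\leq\tfrac{1}{1-\gamma}\lambda_j(1-\lambda_j)$ on $\{\lambda_j\leq\gamma\}$ and $1-\lambda_j\leq\tfrac{1}{\gamma}\lambda_j(1-\lambda_j)$ on $\{\lambda_j>\gamma\}$. The paper just packages your case analysis more compactly via the pointwise inequalities $\1_{(\gamma,1]}(x)\geq x-\tfrac{x(1-x)}{1-\gamma}$ and $\1_{(\gamma,1]}(x)\leq x+\tfrac{x(1-x)}{\gamma}$ on $[0,1]$, then sums over the eigenvalues.
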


\begin{proof}
We have
\begin{equation*}
\1_{(\gamma, 1]} (x) \geq x-\frac{x(1-x)}{1-\gamma} \quad (0\leq x\leq 1),
\end{equation*}
hence
\begin{align*}
 n (\Omega,\gamma) = \sum_j \1_{(\gamma,1]} \big ( \lambda_j(\Omega)\big ) \geq
\sum_j \lambda_j (\Omega)-\frac{1}{1-\gamma} \sum_j \big ( \lambda_j
(\Omega)-\lambda_j^2 (\Omega)\big ).
\end{align*}
Using \eqref{EqTrace},\eqref{EqHSNorm} and \eqref{EqBergmanDiag} this implies
\begin{align*}
 n (\Omega,\gamma) &\geq
 \int_\Omega |\Pi(x,x)| -\frac{1}{1-\gamma} \left [  \int_\Omega |\Pi(x,x)| -
\underset{\Omega\times\Omega}{\iint} \big | \Pi (x,y)\big |^2\right ] =\\*[4pt]
 &= \int_\Omega |\Pi(x,x)| -\frac{1}{1-\gamma} \left [  \iint_{\Omega\times X} \big
|\Pi (x,y)\big |^2 - \underset{\Omega\times\Omega}{\iint} \big | \Pi (x,y)\big
|^2\right ]
\end{align*}
which proves (i). To prove (ii) one may argue similarly using the inequality
\begin{equation*}
\1_{(\gamma,1]} (x)\leq  x +\frac{x(1-x)}{\gamma} \quad (0\leq x\leq 1).
\qedhere
\end{equation*}
\end{proof}

\subsection{}
Now consider powers $L^k$ of the line bundle $L$. We obtain an
operator $T_\Omega^{(k)}$ acting on $\HoLk$ with corresponding eigenvalues
\begin{equation*}
\lambda_1^{(k)} (\Omega) \geq \lambda_2^{(k)} (\Omega) \geq \cdots \geq 0,
\end{equation*}
and we let $n_k (\Omega,\gamma)$ denote the number of eigenvalues
strictly greater than $\gamma$ $(0<\gamma <1)$.

\begin{lemma}
Let $\Lambda_k$ be a $\delta$-separated sampling set at level $k$
with sampling constants $A,B$. Then for any $z\in X$ and $r>0$, 
\begin{equation*}
\# \big( \Lambda_k \cap B \big( z, \tfrac{r+\delta}{\sqrt{k}} \,\big ) \big)
\geq n_k \big( B \big( z, \tfrac{r}{\sqrt{k}} \,\big ), \gamma \big)
\end{equation*}
where $\gamma$ is some constant lying between $0$ and $1$, such that
$1/(1-\gamma)$ is bounded by the sampling constant $B$
times a constant which may depend on $\delta$ but does not depend
on $k,z,r$.
\end{lemma}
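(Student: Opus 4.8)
The statement to prove is the comparison between the cardinality of a separated sampling set inside a slightly enlarged ball and the eigenvalue-counting function $n_k(B(z,r/\sqrt k),\gamma)$ of the concentration operator $T^{(k)}_\Omega$. The natural approach is a standard ``localize, then count dimensions'' argument in the spirit of Landau: one produces many sections of $L^k$ that are essentially concentrated on $\Omega=B(z,r/\sqrt k)$, feeds them into the sampling inequality, and observes that the sampling points outside the enlarged ball $B(z,(r+\delta)/\sqrt k)$ contribute negligibly, so that the number of sampling points inside the enlarged ball must be large. First I would set $\Omega=B(z,r/\sqrt k)$ and let $V\subset\HoLk$ be the span of the eigensections $s_j$ of $T^{(k)}_\Omega$ with eigenvalue $\lambda_j^{(k)}(\Omega)>\gamma$; by definition $\dim V = n_k(\Omega,\gamma)$. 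Every $s\in V$ satisfies $\int_{\Omega^c}|s|^2 = \langle s,s\rangle - \langle T^{(k)}_\Omega s,s\rangle \le (1-\gamma)\|s\|^2$, i.e. the mass of $s$ outside $\Omega$ is at most a $(1-\gamma)$ fraction of its total mass.

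**Key steps.** Write $\Omega^* = B(z,(r+\delta)/\sqrt k)$. Apply the sampling inequality $A k^{-n}\sum_{\lambda\in\Lambda_k}|s(\lambda)|^2 \le \int_X|s|^2 \le B k^{-n}\sum_{\lambda\in\Lambda_k}|s(\lambda)|^2$ to sections $s\in V$. Split the sampling sum over $\Lambda_k\cap\Omega^*$ and $\Lambda_k\cap(\Omega^*)^c$. For the ``far'' points, use the sub-mean value property \eqref{InEqSubMean} on balls $B(\lambda,\delta/(2\sqrt k))$: since $\Lambda_k$ is $\delta$-separated these balls are pairwise disjoint, and for $\lambda\notin\Omega^*$ each such ball lies in $\Omega^c$; hence
\[
k^{-n}\sum_{\lambda\in\Lambda_k\cap(\Omega^*)^c}|s(\lambda)|^2
\lesssim_\delta \sum_{\lambda\in\Lambda_k\cap(\Omega^*)^c}\int_{B(\lambda,\delta/(2\sqrt k))}|s|^2
\le \int_{\Omega^c}|s|^2 \le (1-\gamma)\|s\|^2 .
\]
Combining with the lower sampling bound, $A k^{-n}\sum_{\lambda\in\Lambda_k\cap\Omega^*}|s(\lambda)|^2 \ge \|s\|^2 - C_\delta(1-\gamma)\|s\|^2$, so that if $\gamma$ is chosen close enough to $1$ (specifically so that $C_\delta(1-\gamma)\le 1/2$, say) we get $k^{-n}\sum_{\lambda\in\Lambda_k\cap\Omega^*}|s(\lambda)|^2 \gtrsim \|s\|^2$ for all $s\in V$, with the implicit constant controlled by $A^{-1}$ — and since the sampling constants satisfy $A\ge A$ fixed, $1/(1-\gamma)$ is indeed bounded by $B$ (equivalently by a constant depending on $\delta$) as required.

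**Conclusion of the argument.** Now consider the evaluation map $E\colon V\to \Cbb^{\#(\Lambda_k\cap\Omega^*)}$ sending $s$ to its (normalized) values at the points of $\Lambda_k\cap\Omega^*$. The inequality just obtained says $\|Es\|^2 \gtrsim \|s\|^2$, so $E$ is injective on $V$; therefore $\dim V \le \#(\Lambda_k\cap\Omega^*)$, which is exactly the claimed inequality $\#(\Lambda_k\cap B(z,(r+\delta)/\sqrt k)) \ge n_k(B(z,r/\sqrt k),\gamma)$. The choice of $\gamma$ is made once and for all depending only on $\delta$ and on the constant in \eqref{InEqSubMean} (and on the fixed sampling constant $B$), which gives the stated dependence.

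**Main obstacle.** The only real subtlety is the bookkeeping around the separation parameter: one must be careful that the disjoint balls $B(\lambda,\delta/(2\sqrt k))$ around far-away sampling points genuinely avoid $\Omega$. This forces the radius enlargement by exactly $\delta/\sqrt k$ — if $\lambda\notin B(z,(r+\delta)/\sqrt k)$ then $B(\lambda,\delta/(2\sqrt k))\subset B(z,r/\sqrt k)^c$, since any point within $\delta/(2\sqrt k)$ of $\lambda$ is still at distance more than $r/\sqrt k$ from $z$ — and explains why the lemma compares $n_k$ at radius $r/\sqrt k$ with the point count at radius $(r+\delta)/\sqrt k$. Everything else (the eigenvalue mass estimate, the sub-mean value application, the injectivity) is routine, and the passage $n_k(\Omega,\gamma)\ge \int_\Omega|\Pi_k(x,x)| - \frac1{1-\gamma}\iint_{\Omega\times\Omega^c}|\Pi_k|^2$ via \eqref{EigenBelow} then yields Lemma~\ref{lem_landau_sampling} with $M = B\cdot(\text{const}_\delta)$ absorbing $1/(1-\gamma)$.
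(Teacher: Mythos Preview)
Your argument is correct and is essentially the same as the paper's: both use the sub-mean value property on disjoint $\delta/2$-balls to show that sampling points outside the enlarged ball only see the mass on $\Omega^c$, and then combine this with the sampling inequality and the concentration bound $\int_{\Omega^c}|s|^2\le(1-\gamma)\|s\|^2$. The paper phrases it contrapositively --- it picks a combination of the first $N+1$ eigensections vanishing on the $N$ near points and deduces $\lambda_{N+1}\le\gamma$ --- whereas you phrase it directly as injectivity of the evaluation map on $V$; these are logically equivalent.

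One bookkeeping slip: in your displayed inequality you wrote $A$ where $B$ belongs. The relevant sampling inequality is $\|s\|^2\le Bk^{-n}\sum_\lambda|s(\lambda)|^2$, which after subtracting the far contribution gives $Bk^{-n}\sum_{\lambda\in\Omega^*}|s(\lambda)|^2\ge(1-BC_\delta(1-\gamma))\|s\|^2$, so the correct choice is $BC_\delta(1-\gamma)\le 1/2$, i.e.\ $1/(1-\gamma)\le 2BC_\delta$, which is exactly the dependence on $B$ stated in the lemma. The constant $A$ plays no role here.
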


\begin{proof}
 Let $\{s_j\}$ be the orthonormal basis of $\HoLk$ which is associated to the
eigenvalues $\lambda_j^{(k)}(\Omega)$, where $\Omega=B \big( z, \tfrac{r}{\sqrt{k}} \,\big )$.
Let $N:=
\# \big(\Lambda_k \cap B(z,\frac{r+\delta/2}{\sqrt{k}})\big)$.
 We may restrict to the case when $N$ is strictly smaller than
 $\dim \HoLk$, since otherwise the inequality holds trivially.
 In this case, we may choose a linear combination
 \begin{equation*}
s=\sum_{j=1}^{N+1} c_j s_j
\end{equation*}
of the first $N+1$ eigensections, such that
\begin{equation*}
s(\lambda)=0, \quad \lambda \in \Lambda_k \cap B \big ( x,
\tfrac{r+\delta/2}{\sqrt{k}}\big )
\end{equation*}
and the $c_j$ are not all zero.
Since $\Lambda_k$ is a sampling set, we have
\begin{equation*}
\|s\|^2 \leq B k^{-n} \sum_{\lambda\in \Lambda_k}
|s(\lambda)|^2 = Bk^{-n} \sum_{\lambda \in \Lambda_k
\setminus B \big (x, \frac{r+\delta/2}{\sqrt{k}} \big)}
|s(\lambda)|^2.
\end{equation*}
Using the inequality \eqref{InEqSubMean} 
and the fact that $B(\lambda, \frac{\delta/2}{\sqrt{k}})$ are disjoint balls, we get
\begin{align*}
\|s\|^2 &\leq K B \sum_\lambda \int_{B\big(\lambda,
\frac{\delta/2}{\sqrt{k}}\big)} |s|^2 \leq K B
\int_{X\setminus \Omega} |s|^2,
\end{align*}
where the constant $K$ may depend on $\delta$ but does not depend
on $k,z,r$. This implies
\begin{align*}
 \lambda_{N+1} (\Omega)
\, \|s\|^2 &= \lambda_{N+1} \sum_1^{N+1} |c_j|^2 \leq \sum_1^{N+1} \lambda_j
|c_j|^2=\big \langle T_\Omega^{(k)} s, s\big \rangle =\int_\Omega |s|^2 \leq 
\gamma \|s\|^2,
\end{align*}
where $\gamma := 1-(K B)^{-1}$. This shows that
$\lambda_{N+1} (\Omega)\leq \gamma$ and hence $n_k(\Omega,\gamma) \leq N$.
\end{proof}

\begin{lemma}
Let $\Lambda_k$ be a $\delta$-separated interpolation set at level $k$
with interpolation constant $C$. Then for any $z\in X$ and $r>0$, 
\begin{equation*}
\# \big( \Lambda_k \cap B \big( z, \tfrac{r-\delta}{\sqrt{k}} \,\big ) \big)
\leq n_k \big( B \big( z, \tfrac{r}{\sqrt{k}} \,\big ), \gamma \big)
\end{equation*}
where $\gamma$ is some constant lying between $0$ and $1$, such that
$1/\gamma$ is bounded by the interpolation constant $C$
times a constant which may depend on $\delta$ but does not depend
on $k,z,r$.
\end{lemma}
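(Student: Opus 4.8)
The plan is to mirror the two preceding lemmas, but now producing a \emph{lower} bound for $n_k(\Omega,\gamma)$, where $\Omega:=B(z,\tfrac{r}{\sqrt k})$, by exhibiting a subspace of dimension $N:=\#\big(\Lambda_k\cap B(z,\tfrac{r-\delta}{\sqrt k})\big)$ on which the concentration operator $T_\Omega^{(k)}$ is bounded below. Write $\Lambda':=\Lambda_k\cap B(z,\tfrac{r-\delta}{\sqrt k})$ and let $V$ be the $N$-dimensional Hilbert space of value-tuples $v=\{v_\lambda\}_{\lambda\in\Lambda'}$, with $v_\lambda$ in the fiber of $\lambda$ in $L^k$, normed by $\|v\|_V^2:=k^{-n}\sum_{\lambda\in\Lambda'}|v_\lambda|^2$ (if $N=0$ the claim is trivial, so assume $N\ge1$). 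The interpolation property of $\Lambda_k$ makes the restriction map $R\colon \HoLk\to V$, $Rs:=\{s(\lambda)\}_{\lambda\in\Lambda'}$, surjective; moreover, for any $v$, the interpolation estimate applied to a solution of $Rs=v$, combined with the fact that the minimal $L^2$-norm solution is the unique element of $W:=(\ker R)^\perp$ mapped to $v$ by $R$, gives that $\|s\|^2\le C\,\|Rs\|_V^2$ for every $s\in W$. Thus $\dim W=N$. (We may assume $\delta\le1$: for larger separation the conclusion follows at once from the case $\delta=1$, since that case is stronger.)

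Next I would compare $\|Rs\|_V^2$ with $\int_\Omega|s|^2$. Since $\Lambda_k$ is $\delta$-separated, the balls $B(\lambda,\tfrac{\delta/2}{\sqrt k})$, $\lambda\in\Lambda'$, are pairwise disjoint, and for $\lambda\in\Lambda'$ each of them is contained in $\Omega$ because $\tfrac{r-\delta}{\sqrt k}+\tfrac{\delta/2}{\sqrt k}<\tfrac{r}{\sqrt k}$. Applying the sub-mean value inequality \eqref{InEqSubMean} with $p=2$ to each $|s(\lambda)|^2$ and summing gives
\[
\|Rs\|_V^2 = k^{-n}\sum_{\lambda\in\Lambda'}|s(\lambda)|^2 \;\lesssim\; \delta^{-2n}\sum_{\lambda\in\Lambda'}\int_{B(\lambda,\delta/(2\sqrt k))}|s|^2 \;\le\; \delta^{-2n}\int_\Omega|s|^2 ,
\]
with an implicit constant depending only on $(X,\omega)$ and $(L,\phi)$. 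Combining with $\|s\|^2\le C\,\|Rs\|_V^2$ yields, for all $s\in W$,
\[
\big\langle T_\Omega^{(k)}s,s\big\rangle = \int_\Omega|s|^2 \;\gtrsim\; \frac{\delta^{2n}}{C}\,\|s\|^2 .
\]

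Finally I would invoke the Courant--Fischer min--max principle for the self-adjoint operator $T_\Omega^{(k)}$: the existence of an $N$-dimensional subspace $W$ on which its quadratic form is at least $\gamma_0\|\cdot\|^2$, with $\gamma_0:=c\,\delta^{2n}/C$ (automatically $\gamma_0\le1$ since the form is always bounded by $\|\cdot\|^2$ and $W\ne\{0\}$), forces $\lambda_N^{(k)}(\Omega)\ge\gamma_0$. Taking $\gamma:=\gamma_0/2$ we get $0<\gamma<1$ and $1/\gamma=2C/(c\,\delta^{2n})$, which is $C$ times a constant depending only on $\delta$ and the fixed data, and all of $\lambda_1^{(k)}(\Omega),\dots,\lambda_N^{(k)}(\Omega)$ strictly exceed $\gamma$; hence $n_k(\Omega,\gamma)\ge N=\#\big(\Lambda_k\cap B(z,\tfrac{r-\delta}{\sqrt k})\big)$, which is the assertion. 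The step that requires care — the analogue of the device used in the sampling proof — is producing a subspace of the \emph{correct} dimension $N$ carrying a \emph{uniform} lower bound for $T_\Omega^{(k)}$: a naive span of interpolating ``spikes'' loses a factor that grows with $N\sim k^n$, and this is precisely why one passes to $W=(\ker R)^\perp$ and uses minimal-norm solutions. The geometric bookkeeping (disjointness of the small balls and their inclusion in $\Omega$) and the passage from $\lambda_N^{(k)}\ge\gamma_0$ to a strict count are routine.
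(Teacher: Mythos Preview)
Your proof is correct and follows essentially the same route as the paper: build an $N$-dimensional subspace on which the interpolation estimate and the sub-mean-value inequality combine to give a uniform lower bound for $T_\Omega^{(k)}$, then invoke min--max. The only cosmetic difference is the choice of subspace: the paper takes the orthogonal complement of sections vanishing on \emph{all} of $\Lambda_k$ and then spans explicit ``spike'' interpolants $s_j$ (vanishing on $\Lambda_k\setminus\{x_j\}$), whereas you take directly $(\ker R)^\perp$ for the restriction $R$ to $\Lambda'$; both constructions yield an $N$-dimensional subspace on which $\|s\|^2\le C\,k^{-n}\sum_{\lambda\in\Lambda'}|s(\lambda)|^2$, and from there the arguments are identical.
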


\begin{proof}
 Let $W$ denote the orthogonal complement in $\HoLk$ of the subspace of sections
 vanishing on $\Lambda_k$.  Since $\Lambda_k$ is an interpolation set at level $k$,
 for any set of values $\{v_\lambda\}_{\lambda \in \Lambda_k}$, where each $v_\lambda$
 is an element of the fiber of $\lambda$ in $L^k$, there is a section $s\in
\HoLk$  such that $s(\lambda)=v_\lambda$ $(\lambda \in \Lambda_k)$ and
 \begin{equation}\label{equation1}
\| s\|^2 \leq C k^{-n} \sum_{\lambda \in \Lambda_k} |s(\lambda)|^2.
\end{equation}
 By taking the orthogonal projection of $s$ 
 onto $W$ we obtain another  solution to the interpolation problem,
 which in addition  belongs to $W$ (the projection neither changes
the  values of $s$ on $\Lambda_k$ nor increases its norm).

 On the other hand, a section in $W$ is uniquely determined
 by its values on $\Lambda_k$, as follows from the definition of $W$.
 Hence if $s$  is an arbitrary section in $W$, then it is the unique interpolant
 in $W$ to the values $\{s(\lambda)\}_{\lambda\in \Lambda_k}$. This implies that
 \eqref{equation1} holds for any $s\in W$.

 Now let us denote by $x_1, \ldots, x_N$ the elements of
 $ \Lambda_k \cap B \big( z, \tfrac{r-\delta}{\sqrt{k}} \,\big ) $.
For each $1\leq j\leq N$ we can find $s_j \in W$ such that
$|s_j(x_j)|=1$ and $s_j$ vanishes on $\Lambda_k \setminus \{x_j\}$.
Certainly, the $s_j$ form a linearly independent set of vectors.
We denote by $F$ the $N$-dimensional linear subspace spanned 
by the sections $s_1, \ldots, s_N$.

Now take any $s\in F$, then we have
\begin{align*}
 \|s\|^2 &\leq C k^{-n} \sum_{\lambda \in \Lambda_k} |s(\lambda)|^2
 = C k^{-n} \sum_{\lambda \in \Lambda_k \cap B \big(x,
\frac{r-\delta/2}{\sqrt{k}}\big)} |s(\lambda)|^2 <
K C \int_\Omega |s|^2,
\end{align*}
where $\Omega=B \big( z, \tfrac{r}{\sqrt{k}} \,\big )$, and
the constant $K$ may depend on $\delta$ but does not depend
on $k,z,r$. The last inequality holds by \eqref{InEqSubMean} 
and the fact that $B(\lambda, \frac{\delta/2}{\sqrt{k}})$ are disjoint balls.
Hence
\begin{equation*}
\frac{\langle T_\Omega^{(k)}s, s\rangle}{\|s\|^2}=\frac{\int_\Omega
|s|^2}{\|s\|^2} > \frac{1}{K C} =: \gamma,
\end{equation*}
for any section $s$ in the $N$-dimensional linear subspace $F$.
By the min-max theorem this implies that
$\lambda_N (\Omega) >\gamma$ and hence $n_k(\Omega, \gamma) \geq N$.
\end{proof}

\section{Curvature and density}\label{curvdens}

In the previous section we have used Landau's method to estimate
the number of points of a sampling or interpolation set
in a ball, where the estimate obtained was given in terms of the Bergman
kernel $\Pi_k(x,y)$. In the present section we will prove
Theorem \ref{necessary_conditions} by relating the latter
estimate to geometric properties of the positive line bundle $(L, \phi)$,
namely, to the volume form associated with the curvature of the 
line bundle.

\subsection{}
Given a point $x\in X$, let $\xi_1, \ldots, \xi_n$ be a basis
for the holomorphic cotangent space at $x$, orthonormal with 
respect to the hermitian metric $\omega$ on $X$. With respect to this basis,
the form $ \partial \bar\partial \phi$ is given at the point $x$ by
\begin{equation*}
\partial \bar\partial \phi =\sum_{j,k} \phi_{j,k} \, \xi_j \wedge \bar{\xi_{k}},
\end{equation*}
where $(\phi_{j,k})$ is a hermitian $n\times n$ matrix.
The eigenvalues $\lambda_1 (x),\ldots, \lambda_n (x)$ of this matrix are 
called \emph{the eigenvalues of the curvature form 
$\partial \bar\partial \phi$ with respect to the hermitian metric $\omega$}.

Recall that the line bundle $L$ with the metric $\phi$ is said to be
positive if $i \partial \bar\partial \phi$ is a positive form. This
is equivalent to all of the eigenvalues $\lambda_1 (x),\ldots, \lambda_n (x)$
being strictly positive, for every $x\in X$. 

If the form $i \partial \bar\partial \phi$ is positive, then the
$(n,n)$-form $(i \partial \bar\partial \phi)^n$ is a volume form
on $X$. Our goal is to provide geometrical information on a sampling or
interpolation array $\Lambda=\{\Lambda_k\}$, by relating the mass
distribution of the measure
\begin{equation*}
k^{-n} \sum_{\lambda \in \Lambda_k} \delta_\lambda
\end{equation*}
to the volume distribution of $(i \partial \bar\partial \phi)^n$
in a quantitative manner. We emphasize that the volume form 
$(i \partial \bar\partial \phi)^n$ is a characteristic of the hermitian 
metric $\phi$ on the line bundle only, 
and does not depend on the arbitrary hermitian metric $\omega$ that we have 
chosen on the manifold $X$.
However, the curvature volume form $(i \partial \bar\partial \phi)^n$ is related
to the volume form $V$ associated with $\omega$ through the eigenvalues, and we have
\begin{equation}
\label{curvature_eigenvalues}
(i \partial \bar\partial \phi)^n = n! \,
\lambda_1(x) \cdots \lambda_n(x) \, dV(x).
\end{equation}

The eigenvalues of the curvature form are related also to the asymptotics
of the Bergman function $|\Pi_k(x,x)|$. When the line bundle
is positive, it was proven in \cite{Tian90}, see \cite{Zelditch98} that
\begin{equation}
\label{bergman_kernel_eigenvalues}
|\Pi_k(x,x)| = \pi^{-n}  \lambda_1(x) \cdots \lambda_n(x) k^n + O(k^{n-1}).
\end{equation}
This a more precise result than \eqref{EstimateOnDiagonal}.
In fact, this is only the first term in a complete asymptotic expansion
obtained in \cite{Zelditch98} into a power series in $k$ (see also
\cite{BerBerSjo08} for a different proof).

\subsection{}
The main ingredient which we need for the proof of Theorem 
\ref{necessary_conditions} is to show that the ``error terms'' in Landau's inequalities
\eqref{landau_sampling} and \eqref{landau_interpolation} are indeed
small with respect to the main term. This is done in the following lemma.

\begin{lemma}
\label{lemma_square_kernel}
Let the line bundle $(L, \phi)$ be positive. If $\Omega =B(z, \frac{r}{\sqrt{k}})$, $\, z\in X$, then
\begin{equation*}
\iint_{\Omega \times \Omega^c} \big | \Pi_k (x,y)\big |^2 \lesssim r^{2n-1}.
\end{equation*}
\end{lemma}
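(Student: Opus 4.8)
The plan is to split the region $\Omega\times\Omega^c$ into dyadic annular shells according to the distance from $\Omega$, estimate the contribution of each shell using the off-diagonal Gaussian decay \eqref{EstimateOffDiagonal} of the Bergman kernel, and sum the resulting geometric-type series. First I would fix $z\in X$ and write $\Omega=B(z,\frac r{\sqrt k})$. For a pair $(x,y)\in\Omega\times\Omega^c$ we have $d(x,y)\ge d(z,y)-\frac r{\sqrt k}$, so $|\Pi_k(x,y)|$ is controlled by the decay coming from how far $y$ lies outside $\Omega$. It is cleanest to integrate in $y$ first. For a fixed $x\in\Omega$, the estimate \eqref{EstimateOffDiagonal} gives
\[
\int_{\Omega^c} |\Pi_k(x,y)|^2 \, dV(y) \le \int_X \mathds{1}_{\{d(x,y)\ge t_x\}} \, k^{2n} \exp\big(-2c\sqrt k\, d(x,y)\big)\, dV(y),
\]
where $t_x := d(x,\Omega^c)$, the distance from $x$ to the complement of $\Omega$.

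Next I would estimate this $y$-integral by decomposing the range of $d(x,y)$ into dyadic pieces $d(x,y)\in[2^j t_x, 2^{j+1} t_x]$ if $t_x>0$ (and into pieces $d(x,y)\in[2^j/\sqrt k, 2^{j+1}/\sqrt k]$ if $t_x$ is very small), using that the volume $V(B(x,\rho))\lesssim \rho^{2n}$ on the compact manifold $X$. Each shell contributes $\lesssim k^{2n}(2^j t_x)^{2n}\exp(-c 2^j\sqrt k\, t_x)$, and summing over $j\ge 0$ yields
\[
\int_{\Omega^c} |\Pi_k(x,y)|^2 \, dV(y) \lesssim k^{2n} \cdot t_x^{2n}\exp(-c\sqrt k\, t_x) + (\text{lower-order, when }t_x\lesssim 1/\sqrt k).
\]
The crude bound $\sup_{s\ge 0} s^{2n} e^{-c s} \lesssim 1$ with $s=\sqrt k\, t_x$ then gives $\int_{\Omega^c}|\Pi_k(x,y)|^2\,dV(y)\lesssim k^{n}$ for every $x\in\Omega$. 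But this uniform bound is too lossy by itself; it would give $r^{2n}$ after integrating $x$ over $\Omega$, not $r^{2n-1}$. The point is that for most $x\in\Omega$ the distance $t_x$ to the boundary is not small, so the exponential factor $\exp(-c\sqrt k\, t_x)$ is genuinely tiny.

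So the decisive step is to integrate the bound in $x$ and exploit the boundary layer. Writing $\Omega=B(z,\frac r{\sqrt k})$ and noting that $t_x \ge \frac r{\sqrt k} - d(z,x)$, I would use the coarea-type/layer-cake estimate: the set of $x\in\Omega$ with $t_x\le \frac s{\sqrt k}$ (equivalently $d(z,x)\ge \frac{r-s}{\sqrt k}$) has volume $\lesssim k^{-n}\big(r^{2n}-(r-s)^{2n}\big)\lesssim k^{-n} r^{2n-1} s$ for $0\le s\le r$. Combining with the pointwise bound $\int_{\Omega^c}|\Pi_k(x,y)|^2\,dV(y)\lesssim k^{2n}(\sqrt k\, t_x)^{2n}e^{-c\sqrt k\, t_x}\cdot k^{-n} = k^{n} g(\sqrt k\,t_x)$ where $g(s)=s^{2n}e^{-cs}$, we get
\[
\iint_{\Omega\times\Omega^c}|\Pi_k(x,y)|^2 \lesssim k^{n}\int_\Omega g\big(\sqrt k\, t_x\big)\,dV(x) \lesssim k^{n}\cdot k^{-n} r^{2n-1}\int_0^\infty g(s)\,ds \lesssim r^{2n-1},
\]
after substituting $s=\sqrt k\, t_x$ and using that the volume of the slab $\{s\le \sqrt k\, t_x\le s+ds\}$ is $\lesssim k^{-n} r^{2n-1}\,ds$ (for the relevant range), plus $\int_0^\infty s^{2n}e^{-cs}\,ds<\infty$. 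The main obstacle I expect is bookkeeping the geometry cleanly: making the layer-volume estimate $|\{x\in\Omega: t_x\le \tau\}|\lesssim k^{-n}r^{2n-1}\sqrt k\,\tau$ precise and uniform in $z$ (this uses only the compactness of $X$, i.e.\ $V(B(x,\rho))\asymp \rho^{2n}$ up to constants, together with a comparison of $t_x$ with $\frac r{\sqrt k}-d(z,x)$), and handling the degenerate small-$r$ or small-$t_x$ regime so that the exponential decay is replaced by the trivial bound $|\Pi_k|\lesssim k^n$ without losing the power count. Everything else is a routine dyadic summation against the Gaussian tail in \eqref{EstimateOffDiagonal}.
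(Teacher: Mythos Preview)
Your approach is correct and is essentially the same as the paper's: both fix $x\in\Omega$, bound $\int_{\Omega^c}|\Pi_k(x,y)|^2\,dV(y)$ by a function of the (rescaled) distance from $x$ to $\partial\Omega$ using the off-diagonal decay, and then integrate over $x$ exploiting the shell-volume estimate \eqref{shell_volume} to pick up the factor $r^{2n-1}$. The only notable difference is organizational---the paper partitions $\Omega$ into dyadic shells $\Omega_j$ and carries the calculation with a general decay profile $\varphi(u)=O(u^{-\alpha})$, $\alpha>n+\tfrac12$, rather than the full exponential bound, so it isolates the minimal decay actually needed; your continuous parametrization via $t_x=d(x,\Omega^c)$ is a clean equivalent (just replace your $g(s)=s^{2n}e^{-cs}$, which is not an upper bound for small $s$, by the tail $G(s)=\int_s^\infty u^{2n-1}e^{-2cu}\,du$, and the layer-cake step goes through verbatim).
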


For the proof we will use the asymptotic off-diagonal estimate
\eqref{EstimateOffDiagonal} for the Bergman kernel, which holds when
the line bundle $(L, \phi)$ is positive. In fact, we do not need the precise
exponential decay given by \eqref{EstimateOffDiagonal}.
It will be enough to use the fact that
\begin{equation}\label{offdiag_berg}
|\Pi_k(x,y)|\leq k^n \varphi (\sqrt{k} \,d(x,y)),
\end{equation}
where $\varphi$ is a smooth decreasing function on $[0,\infty)$ such that
\begin{equation}\label{offdiag_phi}
\text{$\varphi (u)=O(u^{-\alpha})$ as $u \to \infty$,
\quad for some $\alpha > n+\tfrac{1}{2}$.}
\end{equation}

\begin{proof}[Proof of Lemma \ref{lemma_square_kernel}]
We partition $\Omega$ into ``dyadic shells" defined by
\begin{equation*}
\Omega_j := \bigg\{
x\in X : \big ( 1-2^{-j+1} \big ) \frac{r}{\sqrt{k}} \leq d(x,z) < \big (
1-2^{-j}\big ) \frac{r}{\sqrt{k}} \bigg \} \quad (j\geq 1).
\end{equation*}
If $x\in \Omega_j$ and $y\in \Omega^c$ then $d(x,y) > 2^{-j}
\frac{r}{\sqrt{k}}$, and thus we have
\begin{equation*}
\iint_{\Omega \times \Omega^c} \big
|\Pi_k (x,y)\big |^2 \leq \sum_{j=1}^\infty \iint_{\Omega_j \times B(x,2^{-j} \frac{r}{\sqrt{k}})^c}  \big |\Pi_k (x,y)\big |^2.
\end{equation*}
To estimate the right hand side we use \eqref{offdiag_berg}. For any $A>0$ we have
\begin{align*}
&\int_{B\big (x, \frac{A}{\sqrt{k}}\big )^c} \big | \Pi_k (x,y)\big
|^2 dV (y) =\\*[4pt] & =\int_0^\infty V \bigg ( \big \{ y : \big
| \Pi_k (x,y) \big | >\lambda \big \} \setminus B \bigg (x,
\frac{A}{\sqrt{k}} \bigg ) \bigg)2\lambda\, d\lambda\\*[4pt]
&\leq \int_0^{k^n \varphi (0)} V \bigg ( \big \{  y : \varphi
\big ( \sqrt{k} \, d(x,y)\big ) \geq k^{-n}\,\lambda \big \}
\setminus B \bigg ( x, \frac{A}{\sqrt{k}}\bigg )
\bigg ) 2\lambda\, d\lambda.
\end{align*}
Since $\varphi$ is decreasing we may apply the change of variable 
$\lambda = k^n \varphi(u)$, and we get
\begin{align*}
&= \int_0^\infty V \bigg (  B \bigg ( x, \frac{u}{\sqrt{k}}\bigg ) \setminus B
\bigg (  x, \frac{A}{\sqrt{k}}\bigg ) \bigg)\, 
 ( 2k^n \varphi (u) ) \, |k^n \varphi' (u)| \, du \\*[4pt]
&\lesssim \int_A^\infty \bigg ( \frac{u}{\sqrt{k}}\bigg )^{2n} \,
 ( 2k^n \varphi (u) ) \, |k^n \varphi' (u)| \, du \\*[4pt]
& \lesssim  k^n \int_A^\infty u^{2n} \, \varphi (u)\, |\varphi' (u)|\, du.
\end{align*}
We also use an estimate for the volume of shells, namely
\begin{equation}
\label{shell_volume}
V \big ( B(x, \rho+\delta) \setminus B(x,\rho)\big ) \lesssim \rho^{2n-1} \,
\delta \quad (0<\delta <\rho),
\end{equation}
which can be proved using the exponential map. In particular, this implies 
\begin{equation*}
V (\Omega_j)\lesssim  2^{-j} \, \frac{r^{2n}}{k^n} \,.
\end{equation*}
Combining all the estimates above yields
\begin{align*}
\iint_{\Omega \times \Omega^c} & \big | \Pi_k (x,y)\big |^2 
\lesssim \sum_{j=1}^\infty \bigg ( 2^{-j}\, \frac{r^{2n}}{k^n}\bigg )
\, k^n \int_{2^{-j} r}^\infty u^{2n} \, \varphi (u) \, |\varphi'(u)|\, du\\*[4pt]
&= r^{2n} \, \int_0^\infty \bigg [ \sum_{j=1}^\infty 2^{-j} \1_{[2^{-j} r,
\infty)} (u)\bigg ]\, u^{2n} \, \varphi (u)\, |\varphi' (u)|\, du\\*[4pt]
&\leq r^{2n} \int_0^\infty \big ( 2 u/r) \,u^{2n}\, \varphi (u)
\, |\varphi' (u)|\, du\\*[4pt]
&\lesssim  r^{2n-1}\, \int_0^\infty u^{2n} \, \varphi (u)^2 \, du,
\end{align*}
where the integration by parts used is justified by \eqref{offdiag_phi}.
Since the last integral converges, again due to \eqref{offdiag_phi}, this
proves the lemma.
\end{proof}

\subsection{}
We can now finish the proof of Theorem \ref{necessary_conditions}.
It is an immediate consequence of the following result.

\begin{lemma}\label{lem_dens_necc}
Let $(L, \phi)$ be positive. If
$\Lambda_k$ be a $\delta$-separated sampling set at level $k$
with sampling constants $A,B$, then for any $z \in X$ and $r >0$,
\begin{equation}
\label{estpt_sampling}
\frac{k^{-n} \# ( \Lambda_k \cap \Omega )}{\int_\Omega 
(i \partial \bar{\partial} \phi)^n} > \frac{1}{\pi^n n!} - \frac{M}{r},
\end{equation}
where $\Omega=B(z, \frac{r}{\sqrt{k}})$, and the constant
$M$ is bounded by the sampling constant $B$
times a constant which may depend on $\delta$ but does not depend on $k,z,r$.

Similarly, if $\Lambda_k$ is a $\delta$-separated interpolation set at 
level $k$ with interpolation constant $C$, then for any $z \in X$ and $r >0$,
\begin{equation}
\label{estpt_interpolation}
\frac{k^{-n} \# ( \Lambda_k \cap \Omega) \big)}{\int_\Omega 
(i \partial \bar{\partial} \phi)^n} < \frac{1}{\pi^n n!} + \frac{M}{r},
\end{equation}
where again $\Omega=B(z, \frac{r}{\sqrt{k}})$, and the constant
$M$ is bounded by the interpolation constant $C$
times a constant which may depend on $\delta$ but does not depend on $k,z,r$.
\end{lemma}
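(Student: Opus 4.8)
The plan is to combine the three ingredients already assembled: the Landau-type counting inequalities (Lemmas \ref{lem_landau_sampling} and \ref{lem_landau_interpolation}), the Tian--Zelditch expansion \eqref{bergman_kernel_eigenvalues} of the Bergman function on the diagonal, and the smallness of the off-diagonal error term (Lemma \ref{lemma_square_kernel}). I would treat the sampling case in detail; the interpolation case is entirely parallel with inequalities reversed. The starting point is \eqref{landau_sampling} with the ball $\Omega' = B(z, \frac{r-\delta}{\sqrt k})$ in place of $\Omega$, so that $B(z, \frac{(r-\delta)+\delta}{\sqrt k}) = B(z,\frac{r}{\sqrt k}) = \Omega$ is exactly the ball appearing on the left of \eqref{estpt_sampling}. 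This gives
\[
\# (\Lambda_k \cap \Omega) \;\geq\; \int_{\Omega'} |\Pi_k(x,x)| \;-\; M \iint_{\Omega' \times (\Omega')^c} |\Pi_k(x,y)|^2,
\]
with $M \lesssim B$ (the implicit constant depending on $\delta$ only).

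Next I would evaluate the two terms on the right. For the diagonal term, integrating \eqref{bergman_kernel_eigenvalues} over $\Omega'$ and using \eqref{curvature_eigenvalues} gives
\[
\int_{\Omega'} |\Pi_k(x,x)| \;=\; \frac{k^n}{\pi^n n!} \int_{\Omega'} (i\partial\bar\partial\phi)^n \;+\; O\!\big(k^{n-1} V(\Omega')\big) \;=\; \frac{k^n}{\pi^n n!} \int_{\Omega'} (i\partial\bar\partial\phi)^n \;+\; O\!\big(k^{-1} r^{2n}\big),
\]
since $V(\Omega') \lesssim (r/\sqrt k)^{2n}$. For the error term, Lemma \ref{lemma_square_kernel} (applied to $\Omega'$, whose radius is comparable to $r/\sqrt k$, or with a trivial adjustment when $r < 2\delta$) yields $\iint_{\Omega'\times(\Omega')^c} |\Pi_k|^2 \lesssim r^{2n-1}$. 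Thus
\[
k^{-n}\#(\Lambda_k \cap \Omega) \;\geq\; \frac{1}{\pi^n n!} \int_{\Omega'} (i\partial\bar\partial\phi)^n \;-\; O\big(k^{-1-n} r^{2n}\big) \;-\; O\big(k^{-n} r^{2n-1}\big).
\]

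Finally I would divide through by $\int_\Omega (i\partial\bar\partial\phi)^n$, which is comparable to $k^{-n} r^{2n}$ up to constants depending only on $(L,\phi)$ (since the curvature volume form has bounded, bounded-below density with respect to $dV$). The first error term contributes $O(k^{-1}) = O(r^{-1})$ when $r \lesssim \sqrt k$, and more care is needed when $r$ is large — but there one replaces the ball by $X$ itself and the statement becomes the global density estimate, so without loss of generality $r \leq \operatorname{diam}(X)\sqrt k$ is harmless; in any case $k^{-1-n}r^{2n} / (k^{-n}r^{2n}) = k^{-1} \lesssim r^{-1}$ once $r \geq$ const. The second error term contributes $O(r^{2n-1}/r^{2n}) = O(1/r)$. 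The remaining task is to replace $\int_{\Omega'}$ by $\int_\Omega$ in the main term: by \eqref{shell_volume} and the boundedness of the curvature density, $\int_\Omega (i\partial\bar\partial\phi)^n - \int_{\Omega'}(i\partial\bar\partial\phi)^n \lesssim (r/\sqrt k)^{2n-1}\cdot \delta/\sqrt k = (\delta) k^{-n} r^{2n-1}$, which after division by $\int_\Omega(i\partial\bar\partial\phi)^n \asymp k^{-n}r^{2n}$ is again $O(1/r)$. Collecting all the $O(1/r)$ contributions into a single constant $M \lesssim B$ gives \eqref{estpt_sampling}. The main obstacle, and the only place demanding genuine care, is bookkeeping the interplay between the shift $\delta/\sqrt k$ in the ball radius and the various error terms, and checking that the comparability constant between $\int_\Omega(i\partial\bar\partial\phi)^n$ and $k^{-n}r^{2n}$ is uniform in $z$ and $r$ — this uses compactness of $X$ together with strict positivity of the curvature eigenvalues.
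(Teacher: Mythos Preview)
Your proposal is correct and follows essentially the same route as the paper: combine Landau's inequality \eqref{landau_sampling}, the off-diagonal estimate of Lemma~\ref{lemma_square_kernel}, and the Tian--Zelditch expansion \eqref{bergman_kernel_eigenvalues} together with \eqref{curvature_eigenvalues}, then divide by $\int_\Omega (i\partial\bar\partial\phi)^n \asymp k^{-n}r^{2n}$. The only cosmetic difference is in handling the $\delta$-shift: the paper applies \eqref{landau_sampling} with $\Omega = B(z,r/\sqrt{k})$ itself and then discards the points of $\Lambda_k$ lying in the thin shell $B(z,\tfrac{r+\delta}{\sqrt{k}})\setminus\Omega$ (at most $\lesssim r^{2n-1}$ of them, by separation and \eqref{shell_volume}), whereas you apply \eqref{landau_sampling} with the smaller ball $\Omega' = B(z,\tfrac{r-\delta}{\sqrt{k}})$ and then correct the curvature integral from $\Omega'$ to $\Omega$ using \eqref{shell_volume} on the integrand side; both corrections produce the same $O(r^{2n-1})$ contribution and the arguments are interchangeable.
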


\begin{proof}
Assume first that $\Lambda_k$ is a $\delta$-separated sampling set at level $k$.
Let $\Omega = B \big ( z, \frac{r}{\sqrt{k}}\big )$.
The separation condition together with \eqref{shell_volume} imply
that the number of points of $\Lambda_k$ in the shell 
$B \big ( z, \frac{r+\delta}{\sqrt{k}}\big ) \setminus B 
\big ( z, \frac{r}{\sqrt{k}}\big )$ is less than $M_1 r^{2n-1}$. 
Hence by \eqref{landau_sampling} and Lemma \ref{lemma_square_kernel} we obtain
\begin{equation*}
\# ( \Lambda_k \cap \Omega ) \geq \int_\Omega |\Pi_k(x,x)| - M_2 r^{2n-1}.
\end{equation*}
Using \eqref{curvature_eigenvalues} and \eqref{bergman_kernel_eigenvalues} this implies
\begin{equation*}
\# ( \Lambda_k \cap \Omega ) \geq \frac{k^n}{\pi^n n!}\int_\Omega (i \partial
\bar{\partial} \phi)^n
- M_2 r^{2n-1} - M_3 k^{n-1} V(\Omega).
\end{equation*}
Since $V(\Omega) \lesssim r^{2n}/k^n$ and $r/ \sqrt{k} \leq 
\operatorname{diam} (X)$ it follows that
\begin{equation*}
\# ( \Lambda_k \cap \Omega ) \geq \frac{k^n}{\pi^n n!}\int_\Omega 
(i \partial \bar{\partial} \phi)^n - M_4 r^{2n-1},
\end{equation*}
and since $k^n\int_\Omega (i \partial \bar{\partial} \phi)^n$ is of order $r^{2n}$
this proves the claimed inequality.
In the second case, when $\Lambda_k$ is a $\delta$-separated interpolation set 
at level $k$, the result is proved in a similar 
way using \eqref{landau_interpolation} instead of \eqref{landau_sampling}.
\end{proof}

This concludes the proof of Theorem \ref{necessary_conditions}.

\begin{remark}
One may also define sampling and interpolation arrays with respect to the $L^p$ norm
on the line bundle $(1 \leq p \leq \infty)$. The necessary density conditions 
given in Corollary \ref{densities} could be extended to this setting as well.
This is rather standard and we do not discuss the details, see e.g.\ \cite{Marzo07}.
\end{remark}

\section{Equidistribution of Fekete points}
\label{sec_wasserstein}

In this section we estimate from above and below the number of Fekete points
that lie in a ball. Our proof of this result is inspired by the work of Nitzan
and Olevskii \cite{NitOle12} where they provide a new proof of Landau's
necessary density condition for sampling and interpolation in the Paley-Wiener
space. Their main idea, that we adapt to the study of Fekete points, is to
find a discrete representation of the Bergman kernel on the diagonal as a linear
combination of reproducing kernels on the Fekete points. This produces a
``tessellation'' by functions concentrated around the Fekete points. The same 
technique can be used to provide an upper bound for the Kantorovich-Wasserstein 
distance
between the Fekete measure \eqref{Feketemeasure} and its limiting measure. We
also use the Fekete points to construct a sampling or interpolation array with
density arbitrarily close to the critical one, showing that the necessary
density conditions in Corollary \ref{densities} are sharp.

\subsection{}
To prove Theorems \ref{fekete_distribution} and \ref{wass_estimate} we will need two lemmas. 
The first one is an $L^1$-variant of the off-diagonal decay estimate of the 
Bergman kernel.

\begin{lemma}\label{decay} 
Let the line bundle $(L, \phi)$ be positive. Then
 \begin{enumerate}
\renewcommand\theenumi{\rm (\roman{enumi})}
\renewcommand{\labelenumi}{\theenumi}
\addtolength{\itemsep}{4pt}
  \item\label{bk:i} $\displaystyle \sup_{x\in X} \int_X |\Pi_k(x,y)|\, dV(y)\lesssim 1$;
  \item\label{bk:ii} If $\Omega=B(z,R/\sqrt{k})$ then 
  \[
   k^n \iint_{\Omega\times\Omega^c} |\Pi_k(x,y)| \lesssim R^{2n-1}
  \]
  uniformly in $z\in X$;
  \item\label{bk:iii} 
  $\displaystyle  \sup_{x\in X}\int_X d(x,y) |\Pi_k(x,y)|\, dV(y)\lesssim 1/\sqrt{k}$.
 \end{enumerate}
\end{lemma}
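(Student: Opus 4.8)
The plan is to deduce all three parts from the pointwise off-diagonal estimate \eqref{EstimateOffDiagonal}, namely $|\Pi_k(x,y)| \lesssim k^n \exp(-c\sqrt{k}\, d(x,y))$, together with the volume-of-shells estimate \eqref{shell_volume}. The common mechanism is a ``layer-cake'' decomposition of $X$ into annuli $A_j = \{y : j/\sqrt{k} \le d(x,y) < (j+1)/\sqrt{k}\}$ around the fixed point $x$, each of volume $V(A_j) \lesssim (j+1)^{2n-1}/k^n$ by \eqref{shell_volume} (for the innermost ball $j=0$ one uses $V(B(x,1/\sqrt{k})) \lesssim k^{-n}$ directly). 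Since the manifold has finite diameter, the sum is effectively over $0 \le j \lesssim \sqrt{k}$, but the exponential decay makes the tail harmless so one may as well sum over all $j \ge 0$.

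For part \ref{bk:i}, I would write
\[
\int_X |\Pi_k(x,y)|\, dV(y) \lesssim \sum_{j\ge 0} k^n e^{-cj} V(A_j)
\lesssim \sum_{j\ge 0} (j+1)^{2n-1} e^{-cj} \lesssim 1,
\]
uniformly in $x$, since the series converges. For part \ref{bk:iii}, the extra factor $d(x,y) \le (j+1)/\sqrt{k}$ on the annulus $A_j$ contributes an additional $1/\sqrt{k}$ and shifts the power of $(j+1)$ by one:
\[
\int_X d(x,y)\,|\Pi_k(x,y)|\, dV(y) \lesssim \frac{1}{\sqrt{k}}\sum_{j\ge 0} (j+1)^{2n}\, e^{-cj} \lesssim \frac{1}{\sqrt{k}},
\]
again uniformly in $x$; this is the only place the decay rate (rather than mere integrability) is genuinely used, though any fixed polynomial decay of sufficiently high order would also suffice.

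For part \ref{bk:ii} I would follow the pattern of the proof of Lemma~\ref{lemma_square_kernel}: partition $\Omega = B(z,R/\sqrt{k})$ into the dyadic shells $\Omega_j = \{x : (1-2^{-j+1})R/\sqrt{k} \le d(x,z) < (1-2^{-j})R/\sqrt{k}\}$, so that $V(\Omega_j) \lesssim 2^{-j} R^{2n}/k^n$ by \eqref{shell_volume}, and observe that for $x \in \Omega_j$ and $y \in \Omega^c$ one has $d(x,y) > 2^{-j} R/\sqrt{k}$. Hence, using \ref{bk:i} (or more precisely the sharper bound $\int_{B(x,\rho)^c}|\Pi_k(x,y)|\,dV(y) \lesssim e^{-c\sqrt{k}\rho/2}\int_X |\Pi_k(x,y)|\,dV(y)$ obtained by the same annular sum with the exponential split off),
\[
k^n \iint_{\Omega\times\Omega^c} |\Pi_k(x,y)| \lesssim k^n \sum_{j\ge 1} V(\Omega_j)\, \sup_{x\in\Omega_j}\!\int_{B(x,2^{-j}R/\sqrt{k})^c}\!\!|\Pi_k(x,y)|\,dV(y) \lesssim R^{2n}\sum_{j\ge 1} 2^{-j}\,\psi(2^{-j}R),
\]
where $\psi(u) = O(u^{-1})$ (indeed $\psi$ decays exponentially, but only one negative power is needed). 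Summing the geometric-type series against $\psi(2^{-j}R) \lesssim 2^{j}/R$ gives the bound $\lesssim R^{2n-1}$, uniformly in $z$. The only mild obstacle is bookkeeping the uniformity over $x$ in the inner integral and handling the finitely many shells near the boundary of $\Omega$ where $2^{-j}R$ is small; both are routine, and in the regime $R \lesssim 1$ the claimed bound $R^{2n-1}$ is in any case weaker than the trivial bound obtained from \ref{bk:i} and $k^n V(\Omega) \lesssim R^{2n}$, so one only needs the argument for large $R$.
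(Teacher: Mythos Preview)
Your proposal is correct and follows the route the paper indicates (the paper simply says the proof is ``completely similar to the one used in the proof of Lemma~\ref{lemma_square_kernel}'' and omits the details). Your annular decomposition for parts \ref{bk:i} and \ref{bk:iii} and the dyadic-shell decomposition of $\Omega$ for part \ref{bk:ii} are exactly what one expects when transcribing that argument to the $L^1$ setting.

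One small correction in part \ref{bk:ii}: the parenthetical claim that $\psi(u)=O(u^{-1})$ alone suffices is not quite right. With only that decay, the sum $R^{2n}\sum_{j\ge 1}2^{-j}\psi(2^{-j}R)$ picks up an extra $\log R$: splitting at $j_0\approx\log_2 R$, the range $j\le j_0$ contributes $\sum_{j\le j_0}2^{-j}\cdot 2^{j}/R\approx (\log R)/R$, not $1/R$. To get the clean $R^{2n-1}$ you need either $\psi(u)=O(u^{-1-\varepsilon})$ for some $\varepsilon>0$ (any power strictly larger than one will do), or you should imitate the Fubini-type swap $\sum_{j\ge1}2^{-j}\1_{[2^{-j}R,\infty)}(u)\le 2u/R$ from the paper's proof of Lemma~\ref{lemma_square_kernel}, which then requires $\int_0^\infty u^{2n}\varphi(u)\,du<\infty$. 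Since you already note that $\psi$ actually decays exponentially, the argument is fine; just drop the remark that one power is enough.
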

This can be proved with an argument completely similar to the one used
in the proof of Lemma \ref{lemma_square_kernel}, so we omit the details.

\begin{lemma}\label{lem:dual}
Let $\{\ell_\lambda\}$ be the Lagrange sections associated to the Fekete points
$\Fcal_k$. Then there exist sections $\Phi_\lambda\in H^0(L^k)$,
$\lambda\in \Fcal_k$, such that:
\begin{enumerate}
\renewcommand\theenumi{\rm (\roman{enumi})}
\renewcommand{\labelenumi}{\theenumi}
\addtolength{\itemsep}{4pt}
 \item\label{i} $\int_X \langle \ell_\lambda(x),\Phi_\lambda(x)\rangle dV(x)=1,\quad \forall \lambda\in \Fcal_k$.
\item\label{ii} $\sum_{\lambda\in \Fcal_k} \langle
\ell_\lambda(x),\Phi_\lambda(x)\rangle =|\Pi_k(x,x)|,\quad \forall x\in X$.
\item\label{iii} $|\Phi_\lambda(x)|= |\Pi_k(x,\lambda)|$, $\forall x\in X$,
$\lambda\in \Fcal_k$.
\end{enumerate}
\end{lemma}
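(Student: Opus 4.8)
The plan is to construct the sections $\Phi_\lambda$ explicitly from the Bergman sections and the Lagrange sections, and then to verify the three properties directly. Recall that by Lemma~\ref{LemBergmanSec} applied to $H^0(L^k)$, for each $\lambda\in\Fcal_k$ there is a section $\Phi_\lambda\in H^0(L^k)$ with $|\Phi_\lambda(x)|=|\Pi_k(x,\lambda)|$ for all $x\in X$; this already gives property~\eqref{iii}. The point is that this $\Phi_\lambda$ is, up to a unimodular constant, the reproducing kernel $\Pi_k(\cdot,\lambda)$ viewed as a section through a frame at $\lambda$, so it satisfies the reproducing property $\int_X\langle s(x),\Phi_\lambda(x)\rangle\, dV(x)=\overline{c}\cdot\langle s(\lambda),e(\lambda)\rangle/|e(\lambda)|$ for an appropriate frame $e$; more invariantly, $\int_X \langle s(x),\Phi_\lambda(x)\rangle\,dV(x)$ equals $s(\lambda)$ paired against a fixed unit vector in the fiber. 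By rescaling $\Phi_\lambda$ by a unimodular constant (which does not affect \eqref{iii}) we may arrange that $\int_X\langle \ell_\lambda(x),\Phi_\lambda(x)\rangle\,dV(x)$ is a nonnegative real number; and since $|\ell_\lambda(\lambda)|=1$ by \eqref{EqLagrangeBound} and \eqref{EqLagrangeDelta}, this reproducing identity evaluates to $|\ell_\lambda(\lambda)|=1$, giving property~\eqref{i}.

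For property~\eqref{ii}, I would use the fact that $\{\ell_\lambda\}_{\lambda\in\Fcal_k}$ is a basis of $H^0(L^k)$ (established in Section~\ref{SecFeketeProperties}), with the dual basis given precisely by the functionals $s\mapsto\langle s(\lambda),\ell_\lambda(\lambda)\rangle$. Writing the Bergman kernel in this basis: for fixed $x$, the section $y\mapsto\Pi_k(y,x)$ (suitably interpreted as an element of $H^0(L^k)$ via Lemma~\ref{LemBergmanSec}, namely $\Phi_x$ itself up to the frame) expands as $\Phi_x(\cdot)=\sum_\lambda \langle\Phi_x(\lambda),\ell_\lambda(\lambda)\rangle\,\ell_\lambda(\cdot)$. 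Pairing the reproducing identity the right way, one finds $\langle\Phi_x(\lambda),\ell_\lambda(\lambda)\rangle=\overline{\langle\ell_\lambda(x),\Phi_\lambda(x)\rangle}$ after taking the appropriate conjugates, so that evaluating the expansion of $\Phi_x$ at the point $x$ and taking norms yields $|\Pi_k(x,x)|=|\Phi_x(x)|=\sum_\lambda\langle\ell_\lambda(x),\Phi_\lambda(x)\rangle$, provided the summands are real and nonnegative. The reality/positivity of each summand is exactly what the normalization from step one buys us, once one checks it is consistent with the normalization needed here.

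The main obstacle, and the step that needs the most care, is bookkeeping the unimodular constants and frames so that \emph{all three} normalizations are simultaneously compatible — i.e.\ that the single choice of $\Phi_\lambda$ fixed in step one (to make \eqref{i} hold with value exactly $1$) also makes each term $\langle\ell_\lambda(x),\Phi_\lambda(x)\rangle$ in \eqref{ii} a nonnegative real number summing to $|\Pi_k(x,x)|$, not merely something of that modulus. The clean way to handle this is to work from the outset with the identity $\Pi_k(x,y)=\sum_j s_j(x)\otimes\overline{s_j(y)}$ in an orthonormal basis: define $\Phi_\lambda(x):=|e(\lambda)|\sum_j\overline{f_j(\lambda)}s_j(x)$ as in the proof of Lemma~\ref{LemBergmanSec} (with $e$ a frame near $\lambda$ and $s_j=f_j e$), and then compute $\langle\ell_\lambda(x),\Phi_\lambda(x)\rangle$ directly. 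Since $\ell_\lambda=\sum_j a_j s_j$ for some coefficients $a_j$, one gets $\langle\ell_\lambda(x),\Phi_\lambda(x)\rangle=|e(\lambda)|\sum_{i,j}a_i(x)\cdots$, which after using orthonormality collapses to an expression whose diagonal-evaluation and integral are transparent. Tracking through, $\int_X\langle\ell_\lambda,\Phi_\lambda\rangle\,dV=\langle\ell_\lambda(\lambda),\ell_\lambda(\lambda)\rangle\cdot(\text{something})$ reduces to $|\ell_\lambda(\lambda)|^2=1$ after one absorbs a constant, and $\sum_\lambda\langle\ell_\lambda(x),\Phi_\lambda(x)\rangle$ telescopes via the basis/dual-basis pairing to $\sum_j|s_j(x)|^2=|\Pi_k(x,x)|$. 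Once the explicit formula is in hand the three verifications are routine; the only real work is choosing the definition of $\Phi_\lambda$ so that no spurious constants appear, and confirming that \eqref{iii} (which is automatic from the $\sum\overline{f_j(\lambda)}s_j$ formula exactly as in Lemma~\ref{LemBergmanSec}) survives this choice.
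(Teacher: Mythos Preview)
Your proposal is correct and follows essentially the same approach as the paper: construct $\Phi_\lambda$ by the explicit formula from Lemma~\ref{LemBergmanSec}, verify \ref{iii} immediately, verify \ref{i} via the reproducing/orthonormality identity, and verify \ref{ii} by expanding the orthonormal basis in the Lagrange basis and collapsing $\sum_j|s_j(x)|^2$. The paper resolves the normalization bookkeeping you flag by the single clean choice of taking the Lagrange section $\ell_\lambda$ itself as the local frame $e_\lambda$ (so that $|e_\lambda(\lambda)|=1$ and the coefficients $f_{j,\lambda}(\lambda)$ appear directly in both computations), after which all three identities hold on the nose with no rescaling.
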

\begin{proof}
 Let $s_1,\ldots,s_N$ be an orthonormal basis for $H^0(L^k)$. Let
$e_\lambda(x)$ be a holomorphic frame in a neighbourhood $U_\lambda$ of
$\lambda$
($\lambda\in \Fcal_k$). Then 
\[
s_j(x)=f_{j,\lambda}(x) e_\lambda(x),\qquad x\in U_\lambda.
\]
By Lemma~\ref{LemBergmanSec}, if we define
\[
 \Phi_\lambda(x):= |e_\lambda(\lambda)| \sum_{j=1}^N
\overline{f_{j,\lambda}(\lambda)} s_j(x)
\]
then \ref{iii} is satisfied.

We now choose $e_\lambda(x):=\ell_\lambda(x)$, the Lagrange section. Then,
since $|\ell_\lambda(\lambda)|=1$, we have
\[
 \Phi_\lambda(x)= \sum_{j=1}^N \overline{f_{j,\lambda}(\lambda)} s_j(x).
\]
Since $\{s_j(x)\}$ is an orthonormal basis, 
\[
 \ell_\lambda = \sum_{j=1}^N s_j\int_X \langle
\ell_\lambda(x),s_j(x)\rangle\, dV(x).
\]
Therefore 
\[
 \ell_\lambda(\lambda) = \sum_{j=1}^N s_j(\lambda)\int_X \langle
\ell_\lambda(x),s_j(x)\rangle\, dV(x) =
\sum_{j=1}^N f_{j,\lambda} \ell_\lambda(\lambda)\int_X \langle
\ell_\lambda(x),s_j(x)\rangle\,dV(x).
\]
Thus,
\[
 1= \int_X \Bigl\langle
\ell_\lambda(x),\sum_{j=1}^N
\overline{f_{j,\lambda(\lambda)}}s_j(x)\Bigr\rangle\, dV(x)=\int_X
\langle\ell_\lambda(x),\Phi_\lambda(x)\rangle\, dV(x),
\]
which gives \ref{i}.

Since $\{\ell_\lambda\}$ is a ``Lagrange basis'' for $H^0(L^k)$:
\[
 s_j=\sum_{\lambda\in\Fcal_k}\langle s_j(\lambda),\ell_\lambda(\lambda)\rangle
\ell_\lambda=\sum_{\lambda\in\Fcal_k}\langle
f_{j,\lambda}(\lambda) \ell_\lambda(\lambda),\ell_\lambda(\lambda)\rangle
\ell_\lambda
=\sum_{\lambda\in\Fcal_k}f_{j,\lambda}(\lambda)\ell_\lambda
\]
Therefore
\[
\begin{aligned}
 |\Pi_k(x,x)|&=\sum_{j=1}^N |s_j(x)|^2=\sum_{j=1}^N \langle
s_j(x),s_j(x)\rangle=
 \sum_{j=1}^N \Bigl\langle
\sum_{\lambda\in\Fcal_k}f_{j,\lambda}(\lambda)\ell_\lambda(x),
s_j(x)\Bigr\rangle\\
&=\sum_{\lambda\in\Fcal_k} \Bigl\langle \ell_\lambda(x),\sum_{j=1}^N
\overline{f_{j,\lambda}(\lambda)}s_j(x)\Bigr\rangle
=\sum_{\lambda\in\Fcal_k} \langle \ell_\lambda(x),\Phi_\lambda(x)\rangle,
\end{aligned}
\]
which gives \ref{ii}.
\end{proof}

We proceed with the proof of Theorem \ref{fekete_distribution}.

\begin{proof} [Proof of Theorem \ref{fekete_distribution}]
 Denote $\Omega:=B(z,\frac{R}{\sqrt{k}})$. By Lemma \ref{lem:dual} we have
 \[
 \begin{split}
  \#\Fcal_k\cap \Omega-\int_{\Omega}|\Pi_k(x,x)|=
  \sum_{\lambda\in \Fcal_k\cap \Omega} \int_X \langle
\ell_\lambda(x),\Phi_\lambda(x)\rangle-\int_\Omega \sum_{\lambda\in \Fcal_k}
\langle \ell_\lambda(x),\Phi_\lambda(x)\rangle=\\
=\int_{X\setminus \Omega} \sum_{\lambda\in\Fcal_k\cap \Omega} \langle
\ell_\lambda(x),\Phi_\lambda(x)\rangle-\int_\Omega \sum_{\lambda\in
F_k\cap(X\setminus \Omega)} \langle \ell_\lambda(x),\Phi_\lambda(x)\rangle=
A_1-A_2.
\end{split}
 \]
We first estimate $A_1$. We have 
\[
 |A_1|\le \int_{\Omega^c} \sum_{\lambda\in\Fcal_k\cap \Omega}
|\Pi_k(x,\lambda)|.
\]
By the sub-mean value property \eqref{InEqSubMean},
\[
 |\Pi_k(x,\lambda)| \lesssim \bigl(\frac \delta{\sqrt{k}}\bigr)^{-2n}
\int_{B(\lambda,\delta/\sqrt{k})} |\Pi_k(x,y)|\, dV(y).
\]
We take $\delta$ to be the separation constant of $\Fcal_k$ (Lemma \ref{LemFekSeparated}), then
\[
 \sum_{\lambda\in\Fcal_k\cap B(z,\frac{R-\delta}{\sqrt{k}})}
|\Pi_k(x,\lambda)|\lesssim k^n \int_\Omega |\Pi_k(x,y)|\, dV(y).
\]
Hence by part \ref{bk:ii} of Lemma~\ref{decay},
\[
 \int_{\Omega^c} \sum_{\lambda\in\Fcal_k\cap B(z,\frac{R-\delta}{\sqrt{k}})}
 |\Pi_k(x,\lambda)|\lesssim k^n \iint_{\Omega^c\times\Omega}
|\Pi_k(x,y)|\lesssim R^{2n-1}.
\]
On the other hand, the separation condition together with \eqref{shell_volume} imply
\[
 \# \Fcal_k \cap \Bigl( B\bigl(z,\frac{R}{\sqrt{k}}\bigr)\setminus
B\bigl(z,\frac {R-\delta}{\sqrt{k}}\bigr)
  \Bigr)  \lesssim R^{2n-1},
\]
and hence
\[
 \int_{\Omega^c} \sum_{\lambda\in\Fcal_k\cap
\bigl(B(z,\frac{R}{\sqrt{k}})\setminus
B(z,\frac {R-\delta}{\sqrt{k}})\bigr)
  }
 |\Pi_k(x,\lambda)|\lesssim 
 R^{2n-1}\sup_{\lambda}\int_X |\Pi_k(x,\lambda)|\, dV(x)\lesssim R^{2n-1},
\]
using part \ref{bk:i} of Lemma~\ref{decay}.
Combining the two estimates yields $|A_1|\lesssim R^{2n-1}$. In the same way,
we can also get the estimate $|A_2|\lesssim R^{2n-1}$.
Hence using \eqref{curvature_eigenvalues}
and \eqref{bergman_kernel_eigenvalues},
\begin{equation}\label{fek_pt_numer}
 \#\Fcal_k\cap \Omega =\int_{\Omega }
|\Pi_k(x,x)|+O(R^{2n-1}) = (1 + O(R^{-1})) \frac{k^n}{\pi^n
n!}\int_\Omega 
(i \partial \bar{\partial} \phi)^n.
\end{equation}
We also have from \eqref{EqBergmanBasis}  that
\begin{equation}\label{fek_pt_denom}
\# \Fcal_k = \operatorname{dim} \HoLk = \int_X |\Pi_k(x,x)|  =
(1+O(k^{-1}))\frac{k^n}{\pi^n n!}
\int_X (i \partial \bar{\partial} \phi)^n.
\end{equation}
Since we may assume $\frac{R}{\sqrt{k}} \leq\textrm{diam} (X)$, combining
\eqref{fek_pt_numer} with \eqref{fek_pt_denom} proves the theorem. 
\end{proof}

\subsection{}
The estimate \eqref{fek_pt_numer} obtained for the number of Fekete points
in a ball shows, in particular, that a Fekete array $\{\Fcal_k\}$ for the
positive line bundle has the critical density, 
\[
D^{-} (\{\Fcal_k\}) = D^{+} (\{\Fcal_k\}) = \frac{1}{\pi^n n!}\,.
\]
It is easy to check that the density of the perturbed array
$\{\Fcal_{(1\pm\varepsilon)k}\}$ will be equal to the critical value
multiplied by $(1\pm\eps)^n$. Combining this with Corollary
\ref{cor_pert_fek} shows that the density threshold in 
Corollary \ref{densities} is sharp.

\begin{corollary}\label{cor_dens_sharp}
Let $(L, \phi)$ be positive. Then
\begin{enumerate}
\addtolength{\itemsep}{4pt}
\item[{\rm (i)}]
For any $\eps > 0$ there is a sampling array $\Lambda$ with
$D^{+} (\Lambda) < \frac{1}{\pi^n n!} + \eps$.
\item[{\rm (ii)}]
For any $\eps > 0$ there is an interpolation array $\Lambda$ with 
$D^{-} (\Lambda) > \frac{1}{\pi^n n!} - \eps$.
\end{enumerate}
\end{corollary}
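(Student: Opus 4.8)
The plan is to use the perturbed Fekete arrays produced in Corollary~\ref{cor_pert_fek} and to read off their Beurling--Landau densities directly from the quantitative count \eqref{fek_pt_numer}. For part~(i), fix a small $\eps' > 0$ and set $\Lambda_k := \Fcal_{(1+\eps')k}$; by Corollary~\ref{cor_pert_fek}(i) this is a sampling array for $(L,\phi)$, so it only remains to bound $D^{+}(\Lambda)$. Given $x \in X$ and $k,r$ with $R/\sqrt k \le r \le \operatorname{diam}(X)$, write $m := (1+\eps')k$ and $\rho := r\sqrt m$, so that $B(x,r) = B(x,\rho/\sqrt m)$ and $\rho \ge R\sqrt{1+\eps'} \ge R$. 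Since any $R$ admissible in the definition of $\nu_\Lambda^{+}(R)$ forces $k \ge R^2/\operatorname{diam}(X)^2$, the level $m$ is large once $R$ is large, and \eqref{fek_pt_numer} applies to the Fekete configuration $\Fcal_m$ in the ball $B(x,\rho/\sqrt m)$, giving
\[
\#(\Lambda_k \cap B(x,r)) = \bigl(1 + O(\rho^{-1})\bigr)\,\frac{m^n}{\pi^n n!}\int_{B(x,r)} (i\ddbar\phi)^n .
\]
Dividing by $k^n \int_{B(x,r)}(i\ddbar\phi)^n$ and using $\rho^{-1} \le R^{-1}$ shows that the ratio \eqref{equation11} equals $(1 + O(R^{-1}))\,(1+\eps')^n/(\pi^n n!)$ uniformly in $x,k,r$; hence $\nu_\Lambda^{+}(R) \le (1+\eps')^n(\pi^n n!)^{-1}(1 + CR^{-1})$ with $C$ depending only on $(X,\omega,L,\phi)$, and therefore $D^{+}(\Lambda) \le (1+\eps')^n/(\pi^n n!)$. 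Choosing $\eps'$ so small that $(1+\eps')^n < 1 + \pi^n n!\,\eps$ gives a sampling array with $D^{+}(\Lambda) < (\pi^n n!)^{-1} + \eps$.

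For part~(ii) the argument is symmetric: take $\eps' \in (0,1)$ and $\Lambda_k := \Fcal_{(1-\eps')k}$, which is an interpolation array by Corollary~\ref{cor_pert_fek}(ii). Now $m = (1-\eps')k$ and $\rho = r\sqrt m \ge R\sqrt{1-\eps'} \to \infty$ as $R \to \infty$, so the same use of \eqref{fek_pt_numer} yields that the ratio \eqref{equation11} equals $(1 + O(R^{-1}))\,(1-\eps')^n/(\pi^n n!)$ uniformly in $x,k,r$ (with an implicit constant now depending on $\eps'$, which is harmless for fixed $\eps'$). Taking the infimum gives $\nu_\Lambda^{-}(R) \ge (1-\eps')^n(\pi^n n!)^{-1}(1 - C_{\eps'}R^{-1})$, hence $D^{-}(\Lambda) \ge (1-\eps')^n/(\pi^n n!)$; choosing $\eps'$ so small that $(1-\eps')^n > 1 - \pi^n n!\,\eps$ produces an interpolation array with $D^{-}(\Lambda) > (\pi^n n!)^{-1} - \eps$.

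I do not expect a genuine obstacle here: the statement is essentially a repackaging of the precise count \eqref{fek_pt_numer} together with Corollary~\ref{cor_pert_fek}. The only place where a little care is needed is the bookkeeping between the two indices --- the array index $k$, which controls both the normalization $k^{-n}$ and the lower radius cutoff $R/\sqrt k$ in the definition of $\nu_\Lambda^{\pm}(R)$, and the rescaled Fekete level $(1\pm\eps')k$ at which \eqref{fek_pt_numer} is actually invoked. Tracking the radius through this rescaling (checking $r\sqrt{(1\pm\eps')k} \gtrsim R$, so that the relative error $O(R^{-1})$ in \eqref{fek_pt_numer} survives) and verifying that the rescaled level is large whenever $R$ is large are the only points that must be checked, and both are routine.
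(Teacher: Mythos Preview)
Your proposal is correct and follows exactly the approach indicated in the paper: the paragraph preceding Corollary~\ref{cor_dens_sharp} asserts (without details) that the Fekete array has critical density by \eqref{fek_pt_numer}, that the perturbed array $\{\Fcal_{(1\pm\eps')k}\}$ therefore has density $(1\pm\eps')^n/(\pi^n n!)$, and that combining this with Corollary~\ref{cor_pert_fek} gives the result. Your write-up simply supplies the bookkeeping between the array index $k$ and the Fekete level $(1\pm\eps')k$ that the paper leaves implicit.
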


\subsection{}
Given two probability measures $\mu$ and $\nu$ on a metric space $X$, one 
defines the Kantorovich-Wasserstein distance $W$ between them as
\[
W(\mu,\nu)=\inf \left\{ \iint_{X \times X} \dist(x,y) \, d\rho(x,y)\right\}
\]
where the infimum is taken over all Borel probability measures $\rho$ on $X 
\times X$ with marginals $\rho(\cdot,X) = \mu$ and $\rho(X,\cdot) = \nu$. 
This metric plays a key role in transportation problems, see for instance
 \cite{Villani09}.

In our setting we have two probability measures, the first one is the 
Fekete measure $\mu_k$ defined in \eqref{Feketemeasure}, and the second one
is the measure
$(i \partial \bar{\partial} \phi)^n$ normalized to have total mass $1$,
which we denote by $\nu$.
It is known, see \cite{Blumlinger90} for instance, that on a
Riemannian manifold if
$\mu_k(B(x,r))\to\nu(B(x,r))$ for all balls, as
guaranteed by Theorem~\ref{fekete_distribution}, then  $\mu_k$
converges weakly to $\nu$ as $k\to\infty$, where the latter means
that $\int f d\mu_k \to \int f d\nu$ for any continuous function $f$ on $X$.

The Kantorovich-Wasserstein distance metrizes the weak convergence 
of measures. Here we prove Theorem~\ref{wass_estimate} which describes the 
rate of convergence in the Kantorovich-Wasserstein distance. 
For the proof it will be convenient to recall the dual formulation, see \cite[formula (6.3)]{Villani09} 
 \begin{equation}\label{wass-def-dual}
 W(\mu,\nu)=\sup \left\{\Bigl|\int_{X} f d(\mu-\nu)\Bigr|: f\in \lipone(X) 
\right\},
 \end{equation}
where $\lipone(X)$ is the collection of all functions $f$  on $X$ satisfying 
$|f(x)-f(y)|\le d(x,y)$.

\begin{proof} [Proof of Theorem~\ref{wass_estimate}]
To prove the lower bound for the Kantorovich-Wasserstein distance 
we consider the function $f_k(x)=\dist(x,\Fcal_k)$. Then clearly $f_k\in 
\lipone(X)$, and
moreover $f_k$ vanishes on $\Fcal_k$. Hence by \eqref{wass-def-dual},
\[
 W(\mu_k,\nu) \ge \Big| \int_X f_k (d\mu_k-d\nu)\Big| =\int_X f_k d\nu.
\]
The function $f_k$ is bounded below by $\delta > 0$ outside the balls
$B(\lambda,\delta)$, $\lambda\in\Fcal_k$, and so
\[
\int_X f_k d\nu \ge \delta \cdot
 \, \nu\bigl(X\setminus \bigcup_{x\in\Fcal_k}B(x,\delta)\bigr)\ge
 \delta (1- C \delta^{2n}\#\Fcal_k ).
\]
We choose $\delta = \delta(k)$ such that $C \delta^{2n}\#\Fcal_k=1/2$.
Since $\#\Fcal_k\simeq k^n$ by \eqref{h_dim_pos}, this implies
\[
 W(\mu_k,\nu) \gtrsim k^{-1/2}.
\]

% To prove the lower bound for the Kantorovich-Wasserstein distance consider any 
% probability measure $\rho$ on $X\times X$ with marginals $\mu_k$ and $\nu$ 
% respectively, then
% \[
%  \int_{X\times X} d^p(x,y)\, d\rho(x,y)=\frac 
% 1{\#\Fcal_k}\sum_{\lambda\in\Fcal_k}\int_{y\in X} d^p(\lambda,y)\, 
% d\rho(\lambda,y).
% \]
% Outside the balls $B(\lambda,\delta)$, $\lambda\in\Fcal_k$ the function 
% $d^p(\lambda,y)$ is bounded below by $\delta^p$ thus
% \[
%  \int_{X\times X} d^p(x,y)\, d\rho(x,y)\ge \frac 
% {\delta^p}{\#\Fcal_k}\sum_{\lambda\in\Fcal_k}\int_{y\in X\setminus 
% \bigcup_{\lambda\in\Fcal_k}B(\lambda,\delta)} \, d\rho(\lambda,y)=
% \delta^p \int_{y\in X\setminus 
% \bigcup_{\lambda\in\Fcal_k}B(\lambda,\delta)}d\nu(y).
% \]
% Thus
% \[
%  \int_{X\times X} d^p(x,y)\, d\rho(x,y)\ge \delta^p
%  \, \nu\bigl(X\setminus \bigcup_{x\in\Fcal_k}B(x,\delta)\bigr)\ge  \delta (1- C 
% \delta^{2n}\#\Fcal_k ).
% \]
% We choose $\delta = \delta(k)$ such that $C \delta^{2n}\#\Fcal_k=1/2$.
% Since $\#\Fcal_k\simeq k^n$ by \eqref{h_dim_pos}, this implies
% \[
% W(\mu_k,\nu) \gtrsim k^{-1/2}.
% \]

For the upper estimate we will use the following alternative definition of the
Kantorovich-Wasserstein distance which is equivalent to the original:
\begin{equation}\label{alternative}
 W(\mu,\nu)=\inf_{\rho\in S}  \iint_{X \times X} \dist(x,y) \, 
|d\rho(x,y)|
\end{equation}
where the infimum is now taken over the set $S$ of all complex  measures $\rho$ on 
$X \times X$ with marginals $\rho(\cdot,X) = \mu$ and $\rho(X,\cdot) = \nu$.
In order to prove \eqref{alternative} recall the dual formulation \eqref{wass-def-dual}.
%\[
%  W(\mu,\nu)=\sup \left\{\Bigl|\int_{X} f d(\mu-\nu)\Bigr|: f\in 
%\lipone(X)\right\}
%\]
Now, for any complex measure $\rho$ with marginals $\mu$ and $\nu$ we have
\[
 \Bigl|\int_{X} f d(\mu-\nu)\Bigr|=\Bigl|\iint_{X\times X} (f(x)-f(y)) 
d\rho(x,y)\Bigr|\le \iint_{X\times X} \dist(x,y)|d\rho(x,y)|.
\]
Therefore 
\[
W(\mu,\nu)\le\inf_{\rho\in S}  \iint_{X \times X} \dist(x,y) \, 
|d\rho(x,y)|,
\]
the other inequality being trivial.

We will first prove that $W(\mu_k,\nu_k)\lesssim 1/\sqrt{k}$ where $\mu_k$ is the 
Fekete measure defined in \eqref{Feketemeasure} and $\nu_k$ is the 
probability measure defined as
\[
 d\nu_k(y):=\frac 1{N_k} |\Pi_k(y,y)|\, dV(y),
\]
where $N_k=\#\Fcal_k$. This is a probability measure because of 
\eqref{EqBergmanBasis}.
We choose a complex measure $\rho$ to get an upper bound for $W(\mu_k,\nu_k)$ 
as
\[
 d\rho(x,y):=\frac 1{N_k} \sum_{\lambda\in \Fcal_k} \delta_\lambda(x)\times 
\langle \ell_\lambda(y),\Phi_\lambda(y)\rangle\, dV(y)
\]
where $\Phi_\lambda$ are the sections defined in Lemma~\ref{lem:dual} and 
$\ell_\lambda$ are the Lagrange sections. Observe that 
Lemma~\ref{lem:dual}\ref{i} implies that the marginal $\rho(\cdot,X)=\mu_k$ and
Lemma~\ref{lem:dual}\ref{ii} that the marginal $\rho(X,\cdot)=\nu_k$. Thus
\[
 W(\mu_k,\nu_k)\le \iint_{X\times X}\dist(x,y)|d\rho(x,y)|=
 \frac 1{N_k}\sum_{\lambda\in \Fcal_k}\int_{X} \dist(\lambda,y) |\langle 
\ell_\lambda(y),\Phi_\lambda(y)\rangle|\, dV(y).
\]
We know that by the definition of Fekete points, the Lagrange sections are 
bounded and  $|\ell_\lambda(y)|\le 1$, see \eqref{EqLagrangeBound},  and moreover 
$|\Phi_\lambda(y)|=|\Pi_k(y,\lambda)|$ (Lemma~\ref{lem:dual}\ref{iii}). 
Therefore,
\[
  W(\mu_k,\nu_k)\le \frac 1{N_k}\sum_{\lambda\in \Fcal_k}\int_{X} 
\dist(\lambda,y)|\Pi_k(y,\lambda)|\, dV(y)\lesssim 1/\sqrt{k},
\]
where here we have used the estimates of Lemma~\ref{decay}\ref{bk:iii}.
Finally, if we denote by $\nu$ the measure $(i\ddbar\phi)^n$ divided by its total mass, we 
observe that $W(\nu_k,\nu)\lesssim 1/k$ because the total variation 
$\|\nu_k-\nu\| \lesssim 1/k$, by 
\eqref{curvature_eigenvalues} and \eqref{bergman_kernel_eigenvalues}, and since
the total variation controls the Kantorovich-Wasserstein distance, see 
\cite[Theorem~6.15]{Villani09}. We have thus 
proved that $W(\mu_k,\nu)\lesssim 1/\sqrt{k}$ as desired.
\end{proof}

% 
% \begin{lemma}\label{lemma_blum}\cite{Blumlinger90}
% Let $X$ be a compact Riemannian manifold of dimension $d$, with
% associated volume measure $V$. If $f$ is a continuous function on $X$,
% $\mu,\nu$ are two measures on $X$, and $r >0$, then the following estimate
% holds,
% \begin{equation}\label{blumlinger}
%  \begin{aligned}
%   \Big|\int_{X} f(d\nu-d\mu) \Big|&\le \|f-f_r\| \|\nu-\mu+V\| + 
%   Cr^2 \|f\| (1+\|\nu-\mu+V\|) +\\[4pt]
% &+Cr^{-d}\|f\| \int_X |\nu(B(x,r))-\mu(B(x,r))|dV(x),
%  \end{aligned}
% \end{equation}
% where $C$ is a positive constant depending only on the manifold $X$, and 
% \[
% f_r(x)=\frac{1}{V(B(x,r))}\int_{B(x,r)} f(y)\, dV(y), \quad x \in X.
% \]
% \end{lemma}
% 
% Here $\|f\|$ denotes the supremum norm on $X$, while $\|\mu\|$ is the total
% variation norm.
% In our setting $X$ is a hermitian manifold of complex dimension $n$.
% If $f\in\lipone(X)$ then $\|f-f_r\|\le r$, and by subtracting 
% a convenient constant from $f$ (which is innocuous in our setting 
% since both $\mu_k$ and $\nu$ are probability
% measures) we may also assume that $\|f\|\le \mathrm{diam}(X)$.
% Theorem~\ref{fekete_distribution} then guarantees that
% \[
%  |\mu_k(B(x,r))-\nu(B(x,r))|\lesssim
% r^{2n} \big (r\sqrt{k}\,\big )^{-1},
% \]
% so by Lemma \ref{lemma_blum} we get
% \[
% \Big|\int_{X} f(d\nu-d\mu) \Big| \leq
% Cr + Cr^2 + Cr^{-2n} r^{2n} \big (r\sqrt{k}\,\big )^{-1}.
% \]
% If we choose $r = k^{-1/4}$ we obtain the upper bound
% \[
%  W(\mu_k,\nu) \lesssim k^{-1/4},
% \]
% which completes the proof of Corollary \ref{wass_estimate}.

\section{Simultaneously Sampling and Interpolation
arrays}\label{simultaneous}

\subsection{}
In this section we assume that $X$ is a \emph{projective} manifold, but
we work with a metric $\phi$ on the line bundle $L$ which is
only \emph{semi-positive}. We will show that, if there is
a point in $X$ where $\phi$ has a strictly positive curvature, then
the sections of high powers of the line bundle resemble closely the functions in the
Bargmann-Fock space. This observation will allow us to establish Theorem
\ref{no_riesz_bases}, showing that in this case there are no arrays 
which are simultaneously sampling and interpolation for $(L, \phi)$.
The non-existence of simultaneously sampling and interpolation 
sequences is a recent result in the classical Bargmann-Fock
space \cite{AscFeiKai11,GroMal11}.

Actually we could have replaced the assumption that $X$ is projective 
by the apparently weaker condition that $X$ is a K\"ahler manifold.
However, the solution of Siu \cite{Siu84} to the Grauert-Riemenschneieder 
conjecture  shows that, under the hypothesis that $L$ is semipositive with
a point where it has a strictly positive
curvature, the base manifold $X$ is Moishezon, and being also K\"ahler it is
automatically projective \cite{Moishezon66}.

The proof of Siu also shows that under the hypothesis of the theorem, $L$ is big
and thus there is a strictly positive singular metric $\phi_s$ on $L$ that  is
in $L^1_{\text{loc}}$ and 
smooth on all points of $X$ outside a proper analytic set $E$, 
see \cite[Theorem 2.3.30]{MaMa07}.

\subsection{}
We fix a point $x_0\in X\setminus E$ where the original
metric on $L$ had positive curvature. 
\begin{definition}\label{normalized}
We say that we have \emph{normalized coordinates} in a
neighborhood of $x_0\in X\setminus E$ if we have a coordinate chart that is
mapped to a neighborhood of $0$ in $\Cbb^n$ and a local holomorphic frame
$e_L(z)$ such that the following conditions hold:
\begin{itemize}
\item The curvature form of the line
bundle at $x_0$ is given by $\Theta(0) = \sum_{j=1}^n dz_j\wedge d\bar z_j$;
\item $h(0)=1$ and $\frac{\partial h}{\partial z_j}(0)=\frac{\partial^2 h}
{\partial z_j \partial
z_k}(0)=0$;
\end{itemize}
where above $h(z)=|e_L(z)|^2$,
and $\Theta(z) = -\ddbar \log h(z)$ is the curvature form.
\end{definition}
This can always be arranged if the curvature of $h$ is smooth and
positive at the point $x_0$, by choosing appropriate coordinates and a
convenient local frame. Observe that in normalized coordinates 
\begin{equation}\label{normal}
h(z)=e^{-|z|^2+o(|z|^2)}.
\end{equation}

We fix now a neighborhood
$B(0,\delta)$ of the origin at $\Cbb^n$ that is mapped by normal coordinates
to a neighborhood $U$ of $x_0$ in $X$. 
\begin{definition}
We define the sets $\Sigma_k\subset \Cbb^n$
as follows: $\sigma\in \Sigma_k$ if and only if $\sigma/\sqrt{k}$ is mapped by
the normal coordinates to a point in $\Lambda_K\cap U$. By definition
$\Sigma_k\subset B(0,\delta \sqrt{k})$.
\end{definition}
If $\Lambda_k$ is both an interpolation and sampling array, we will construct a
sequence 
$\Sigma\subset \Cbb^n$ such that it is both interpolation and sampling for the
Bargmann-Fock space.
\begin{definition} Given $p\in [1,\infty)$
The Bargmann-Fock space $\mathcal{BF}^p$
consists of entire functions such that 
\[
 \|f\|^p_p:=\int_{\Cbb^n} |f(z)|^p e^{-p|z|^2/2} dm(z) <+\infty.
\]
When $p=\infty$ the natural norm is 
\[
 \|f\|_\infty:=\sup_{\Cbb^n} |f(z)|e^{-|z|^2/2}.
\]
\end{definition}
A sequence $\Sigma$ is sampling for the Bargmann-Fock space $\mathcal{BF}^2$ if
and only if
\[
 \|f\|^2_2 \lesssim \sum_\sigma |f(\sigma)|^2 e^{-|\sigma|^2}\lesssim \|f\|^2_2
\]
and it is interpolation for $\mathcal{BF}^2$ if given any values $\{v_\sigma\}$ 
there is a function $f\in \mathcal{BF}^2 $ such that $f(\sigma)=v_\sigma$ and
with 
the estimate
\[
 \|f\|^2_2 \lesssim \sum_\sigma |v_\sigma|^2 e^{-|\sigma|^2},
\]
provided that the right hand side is finite.

It is known, see \cite{AscFeiKai11} and \cite{GroMal11}, that there do not
exist sequences that are simultaneously sampling and interpolation in
$\mathcal{BF}^2(\Cbb^n)$.

The key ingredient in the construction of $\Sigma$ is that the sections of high
powers of the (locally positive)
line bundle behave as functions in the Bargmann-Fock space when properly
rescaled. This is a well known phenomenon that can be illustrated by the fact
that the  Bergman kernel universally converges to the Bergman kernel of
the Bargmann-Fock space in normal
coordinates if rescaled properly, see \cite{BleShiZel00}. 
The next
theorem is another illustration of the same fact. In order to state it we need
to introduce the notion of weak limits of sequences. If we have a collection of 
separated sequences $\Sigma_k\subset \mathbb C^n$ with a uniform separation
constant for all $k$ and another separated sequence $\Sigma\subset \mathbb C^n$ 
we say that $\Sigma_k$ converges weakly to $\Sigma$ if the corresponding
measures
$\mu_k= \sum_{\sigma\in \Sigma_k} \delta_{\sigma_k}$  converge weakly to
$\sum_{\sigma\in \Sigma} \delta_\sigma$. This notion was used extensively by
Beurling in his study of sampling sequences in the Paley-Wiener space and it
will also be useful in our context.

\begin{thm}\label{thmfock}
Let $\Lambda_k$ be a separated sampling array for $L^k$ and let
$\Sigma$ be any weak limit of a partial subsequence $\Sigma_k$, then $\Sigma$ is
a sampling sequence for $\mathcal{BF}^2(\Cbb^n)$. 

Let $\Lambda_k$ be an interpolation array for $L^k$ and let
$\Sigma$ be any weak limit of a partial subsequence of $\Sigma_k$, then $\Sigma$
is an interpolation sequence for $\mathcal{BF}^2(\Cbb^n)$.
\end{thm}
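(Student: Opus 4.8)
The plan is a Beurling-type localization-and-rescaling argument, transferring the sampling (respectively interpolation) property of $\Lambda_k$ to the Bargmann--Fock space by working in normalized coordinates near $x_0$. There, by \eqref{normal}, the weight $k\phi$ becomes $e^{-|z|^2+o(1)}$ on any fixed Euclidean ball after the substitution $z\mapsto z/\sqrt k$, so that holomorphic sections of $L^k$ transplant to holomorphic functions carrying the Bargmann--Fock Gaussian weight. A preliminary observation used throughout: $\Sigma_k$ is uniformly separated in $\Cbb^n$ (because $\Lambda_k$ is a separated array and the normalized chart is bi-Lipschitz near $x_0$), hence so is every weak limit $\Sigma$; in particular the upper Plancherel--P\'olya bound $\sum_{\sigma\in\Sigma}|f(\sigma)|^2e^{-|\sigma|^2}\lesssim\|f\|_2^2$ for $f\in\mathcal{BF}^2(\Cbb^n)$ holds for free, from the sub-mean value inequality with the Gaussian weight on $\Cbb^n$.

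For the sampling direction, fix $f\in\mathcal{BF}^2(\Cbb^n)$, a large $\rho$, and a cutoff $\chi$ equal to $1$ on $B(0,\rho)$ and supported in $B(0,2\rho)$; using the normalized frame and the $\sqrt k$-rescaling, transplant $\chi f$ to a smooth, compactly supported section $\tilde s_k$ of $L^k$ near $x_0$, with $\|\tilde s_k\|^2=k^{-n}\bigl(\int_{\Cbb^n}|\chi f|^2e^{-|z|^2}\,dm+o(1)\bigr)$ and $\|\dbar\tilde s_k\|^2\lesssim k^{1-n}\tau(\rho)$, where $\tau(\rho):=\int_{|z|\ge\rho}|f|^2e^{-|z|^2}\,dm\to 0$. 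Solve $\dbar u_k=\dbar\tilde s_k$ on $X$ by H\"ormander's $L^2$ estimate, with the semipositive weight $k\phi$ corrected by the strictly positive singular metric $\phi_s$ supplied by Siu's theorem (smooth near $x_0\notin E$): since the curvature of $k\phi+\phi_s$ is $\gtrsim k$ on a neighborhood of $x_0$ containing the support of $\dbar\tilde s_k$, while strictly positive elsewhere, this gives $\|u_k\|^2\lesssim k^{-1}\|\dbar\tilde s_k\|^2\lesssim k^{-n}\tau(\rho)$. Then $s_k:=\tilde s_k-u_k\in\HoLk$ is close to $\tilde s_k$ in $L^2$, and $u_k$ is holomorphic on $B(0,\rho)$ (where $\dbar\tilde s_k=0$), so its values at the $\lambda\in\Lambda_k$ with $|\sigma|<\rho$ are controlled by $\|u_k\|^2$ through the sub-mean value inequality. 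Applying the sampling inequality $\|s_k\|^2\lesssim k^{-n}\sum_{\lambda\in\Lambda_k}|s_k(\lambda)|^2$, dividing by $k^{-n}$, and splitting the right side into points with $|\sigma|\le2\rho$ (where $s_k$ reads off $f(\sigma)e^{-|\sigma|^2/2}$ up to the $o(1)$ from \eqref{normal} and the $O(\tau(\rho)^{1/2})$ from $u_k$) and the rest (where $s_k=-u_k$, contributing $\lesssim k^n\|u_k\|^2\lesssim\tau(\rho)$), one obtains $\|f\|_2^2(1+o(1))\lesssim\sum_{\sigma\in\Sigma_k,\,|\sigma|\le2\rho}|f(\sigma)|^2e^{-|\sigma|^2}+O(\tau(\rho))$. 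Letting $k\to\infty$ along the subsequence with $\Sigma_k\to\Sigma$ (choosing $\rho$ so that $\Sigma$ has no point on the sphere $\{|z|=2\rho\}$) and then $\rho\to\infty$ yields $\|f\|_2^2\lesssim\sum_{\sigma\in\Sigma}|f(\sigma)|^2e^{-|\sigma|^2}$.

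For the interpolation direction, a standard normal-families reduction lets us treat only data $\{v_\sigma\}$ supported on a finite subset of $\Sigma\cap B(0,\rho)$, with target bound $\|f\|_2^2\lesssim\sum_\sigma|v_\sigma|^2e^{-|\sigma|^2}$. Since $\Sigma$ is separated and $\Sigma_k\to\Sigma$ weakly with uniform separation, for large $k$ there is exactly one $\sigma_k\in\Sigma_k$ near each $\sigma\in\Sigma$; assign to the corresponding $\lambda_k\in\Lambda_k$ the value $v_\sigma$ transplanted to the fiber of $L^k$ via the normalized frame and rescaling, and $0$ to all other points of $\Lambda_k$. Interpolation for $\Lambda_k$ yields $s_k\in\HoLk$ with these values and $\|s_k\|^2\lesssim k^{-n}\sum_\lambda|v_\lambda|^2\approx k^{-n}\sum_\sigma|v_\sigma|^2e^{-|\sigma|^2}$. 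Transplanting and rescaling $s_k$ near $x_0$ produces holomorphic functions $f_k$ on $B(0,\delta\sqrt k)$ with $\int|f_k|^2e^{-|z|^2+o(1)}\,dm\lesssim\sum_\sigma|v_\sigma|^2e^{-|\sigma|^2}$; the sub-mean value inequality makes $\{f_k\}$ a normal family, a locally uniformly convergent subsequence has an entire limit $f$ which by Fatou lies in $\mathcal{BF}^2(\Cbb^n)$ with the required bound, and since $\sigma_k\to\sigma$, $f_k\to f$ locally uniformly, and $h(z/\sqrt k)^k\to e^{-|z|^2}$ for the frame normalization, $f(\sigma)=v_\sigma$ on the support and $f(\sigma)=0$ elsewhere on $\Sigma$; the normal-families reduction then finishes the proof.

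The main obstacle is the sampling direction: producing, from an \emph{arbitrary} $f\in\mathcal{BF}^2(\Cbb^n)$, a genuine global holomorphic section $s_k\in\HoLk$ that simultaneously approximates the transplanted $\chi f$ in $L^2$ and has the ``right'' values at the $\lambda\in\Lambda_k$ near $x_0$. This rests on solving $\dbar u_k=\dbar\tilde s_k$ with an estimate gaining the decisive factor $k^{-1}$ — so that the correction $u_k$ is negligible against the main term — which is precisely where the semipositivity of $(L,\phi)$ together with the strictly positive singular metric $\phi_s$ from Siu's solution of the Grauert--Riemenschneider conjecture enters; and on upgrading the $L^2$ smallness of $u_k$ to pointwise smallness at the sample points, legitimate because $u_k$ is holomorphic wherever $\dbar\tilde s_k$ vanishes. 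The remaining bookkeeping — uniform control of the $o(1)$ in \eqref{normal} under the $\sqrt k$-rescaling over the fixed ball $B(0,2\rho)$, the handling of the boundedly many points with $\rho\le|\sigma|\le2\rho$, and the boundary effects in the weak limit — is routine.
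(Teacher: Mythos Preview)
Your proposal is correct and follows essentially the same route as the paper: for interpolation, the normal-families reduction to finitely supported data, transplanted to $\Lambda_k$ and rescaled back; for sampling, the approximation of $f\in\mathcal{BF}^2(\Cbb^n)$ by a global holomorphic section via cutoff plus a $\dbar$-correction, using a metric that blends $k\phi$ with the strictly positive singular metric $\phi_s$ supplied by Siu's theorem to gain the crucial factor $k^{-1}$. The paper packages the sampling approximation as a standalone lemma (Lemma~\ref{lemma_approx}) and uses a growing radius $M_k\to\infty$ with $M_k/\sqrt{k}\to 0$, whereas you fix $\rho$ and pass to the double limit $k\to\infty$ then $\rho\to\infty$; these are equivalent bookkeeping choices.

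One point you should tighten: what you call ``H\"ormander's $L^2$ estimate'' with a singular metric is really the Demailly--Nadel version, and the paper is explicit about a subtlety you pass over. The estimate is stated for $L$-valued $(n,1)$-forms, so one writes $L^k = K_X\otimes F_k$ and needs $F_k$ positive; since $\phi$ is only \emph{semi}positive, twisting by $K_X^{-1}$ could destroy positivity. The paper's remedy is to take $\widetilde\phi_k=(k-N)\phi+N\phi_s-C$ with $N$ large but fixed (not $k\phi+\phi_s$ as you write), so that $N\,c(\phi_s)$ dominates $c(K_X)$ globally while $k\phi$ still provides curvature $\gtrsim k$ on the support of $\dbar\tilde s_k$; the constant $C$ ensures $\widetilde\phi_k\le k\phi$ so that the estimate transfers back to the original metric. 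This does not change your argument's structure, but it is the one place where ``routine'' hides a genuine wrinkle.
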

\begin{proof}
 Let us start by the interpolation part. Assume that $\Sigma$
is the weak limit of a partial subsequences of
$\Sigma_k$ that, with an abuse of notation, will be still
denoted by $\Sigma_k$.
Let us take a sequence of values $\{v_{\sigma}\}_{\sigma\in \Sigma}$,
$v_\sigma\in\Cbb$, with $\sum_{\sigma\in \Sigma} |v_\sigma|^2
e^{-|\sigma|^2}<\infty$. We are going to construct a sequence of functions
$f_k\in \mathcal H(B(0,M_k))$, with $M_k\to\infty$ such that
\[
\sup_k \int_{|z|<M_k} |f_k(z)|^2 e^{-|z|^2}dm(z) <\infty,
\]
and for all $\sigma\in \Sigma$, $\lim_k f_k(\sigma)= v_\sigma$. Thus by a normal
family argument we conclude that there is an interpolating function $f\in
\mathcal{BF}^2$ with $f(\sigma)=v_\sigma$. Actually we may assume without loss
of
generality that, except for a finite number of points, $v_\sigma=0$. This is
harmless if 
 \[
\limsup_{k\to\infty} \int_{|z|<M_k} |f_k(z)|^2 e^{-|z|^2}dm(z) \le C  \sum_\sigma
|v_\sigma|^2 e^{-|\sigma|^2}
\]
with $C$ a constant independent of the number of non-zero terms.

Since we are assuming that the metric is smooth, and we are using
normalized coordinates, we can use Definition~\ref{normalized} and find an
increasing sequence $M_k$, $\lim M_k\to \infty$ (but with $M_k/\sqrt{k}\to 0$)
such that around $x_0$, $h(z)^k\simeq e^{-k|z|^2}$ for all
$|z|< M_k/\sqrt{k}$. 

Take some given values $v_\sigma$. We denote by $\Sigma'\subset \Sigma$ the
finite set of points $\sigma\in \Sigma$ such that $v_\sigma\ne 0$. For $k$ big
enough  $|\sigma/\sqrt{k}|<M_k$ for all $\sigma\in \Sigma'$. For those
$\sigma\in \Sigma'$ there is an associated $\lambda_\sigma^k\in \Lambda_k$ such
that $\sqrt{k}\lambda_\sigma^k\to \sigma$ because $\Sigma_k\to \Sigma$ weakly
(here we are identifying the points in $\Cbb^n$ and in $X$ by its coordinate
chart). Consider the interpolation problem with data $v_\sigma e_L^k$ at the
points $\lambda_{\sigma}^k$, $\sigma\in \Sigma'$. By hypothesis there is a
section $s\in H^0(L^k)$ such that $s_k(\lambda_{\sigma}^k)=v_\sigma
e_L^k(\lambda_{\sigma}^k)$ and
\[
\|s_k\|^2\le \frac{C}{k^n} \sum_{\sigma\in\Sigma'} |v_\sigma|^2 h(\lambda_{\sigma}^k)^k . 
\]
Near $x_0$ we may write $s_k(z)=g_k(z)e_L^k(z)$ and thus
\[
\begin{split}
&\int_{|z|\le M_k/\sqrt{k}} |g_k(z)|^2 e^{-k|z|^2}dm(z)\lesssim \\
&\lesssim\|s_k\|^2 \le 
\frac C{k^n} 
\sum _{\sigma\in\Sigma'} |v_\sigma|^2 h(\lambda_{\sigma}^k)^k \le\frac
C{k^n}\sum_{\sigma\in\Sigma'} |v_\sigma|^2 e^{-k|\lambda_{\sigma}^k|^2} .
\end{split}.
\]

The functions $f_k(z)=g_k(\sqrt{k} z)$ are holomorphic in $|z|<M_k$ and they
satisfy
\[
 \int_{|z|<M_k} |f_k(z)|^2 e^{-|z|^2}\le C \sum_{\sigma\in\Sigma'} |v_\sigma|^2
e^{-|\sqrt{k}\lambda_{\sigma}^k|^2}.
\]
If we let $k\to\infty$ in the right hand side of the inequality  we
obtain:
\[
\limsup_{k\to\infty} \int_{|z|<M_k} |f_k(z)|^2 e^{-|z|^2}\lesssim \sum_{\sigma\in\Sigma'}
|v_\sigma|^2
e^{-|\sigma|^2}.
\]
\end{proof}

\subsection{}
The sampling part of  Theorem~\ref{thmfock} is slightly more involved. We
need an
approximation lemma that in an informal way shows that one can approximate
locally functions in the Bargmann-Fock space by sections of $L^k$. More
precisely, we
will work with semipositive holomorphic line bundles $L$ over a projective
manifold $X$ that have some point where the metric on $L$ has strictly positive
curvature. As we mentioned before, such bundles are big line bundles and
therefore they admit a strictly positive
singular metric $\phi_s$ that is in $L^1_{loc}$ and it is smooth away from an
analytic exceptional set $E\subset X$, see \cite[Theorem 2.3.30]{MaMa07}.

\begin{lemma}\label{lemma_approx}
Let $L$ be a semipositive holomorphic line bundle over a projective manifold
$X$ with some point where the metric on $L$ has positive curvature. We fix a
point $x_0\in X$ where it has strictly positive curvature and that is not
contained in the exceptional analytic set $E$ 
and consider normal coordinates around it and its corresponding frame $e(z)$.
Given any function $f$ in the 
Bargmann-Fock space, and any big $M>0$, there is a $k_0\in\mathbb{N}$ such that
for all $k\ge k_0$ there are global holomorphic sections
$s_k(z)=f_k(z)e_k(z)$ of $L^k$ such that in the normalized coordinates around
$x_0$:
\[\int_{|z|<M/\sqrt{k}} |f(\sqrt{k}z)-f_k(z)|^2 e^{-k|z|^2}dz\lesssim \frac
1{M^2} \|f\|^2/k^n\]
and
\[
\int_{|z|>M/\sqrt{k}} |s_k|^2_\phi \lesssim \frac 1{M^2} \|f\|^2/k^n.
\]
In particular $\|s_k\|^2\simeq \|f\|^2/k^n$ for all $k\ge k_0$.
\end{lemma}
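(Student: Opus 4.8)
The plan is the classical Hörmander $\bar\partial$-construction: rescale $f$, cut it off near $x_0$ to obtain a global smooth section of $L^k$, and then correct it by the minimal $L^2$-solution of a $\bar\partial$-equation to make it holomorphic. The factor $1/M^2$ in the conclusion will appear as a product of two effects: the source of the $\bar\partial$-equation is $\bar\partial$ of a cutoff living at scale $M/\sqrt k$, and so carries a factor $k/M^2$; and the curvature of the weight near $x_0$ is of size $k$ — this is precisely where one uses that $\phi$ has strictly positive curvature at $x_0$ — which Hörmander's estimate converts into a factor $1/k$. Concretely, work in normalized coordinates around $x_0$ with frame $e_L$, so that $h(z)^k\simeq e^{-k|z|^2}$ on $\{|z|<M_k/\sqrt k\}$ for some $M_k\to\infty$ with $M_k/\sqrt k\to0$, as in Definition~\ref{normalized}. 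Fix $M<M_k$ and a cutoff $\chi$ with $\chi\equiv1$ on $\{|z|\le M/\sqrt k\}$, $\operatorname{supp}\chi\subset\{|z|\le 2M/\sqrt k\}$ and $|\bar\partial\chi|\lesssim\sqrt k/M$. Then $\sigma_k:=\chi(z)\,f(\sqrt k z)\,e_L(z)^k$, extended by zero, is a global smooth section of $L^k$, and $g_k:=\bar\partial\sigma_k=(\bar\partial\chi)\otimes f(\sqrt k z)\,e_L^k$ is supported in the annulus $\{M/\sqrt k\le|z|\le 2M/\sqrt k\}$; after the change of variables $w=\sqrt k z$,
\[
\int_X |g_k|^2\,e^{-k\phi}\ \lesssim\ \frac{k}{M^2}\,k^{-n}\!\!\int_{M\le|w|\le 2M}\!\!|f|^2e^{-|w|^2}\,dm\ \le\ \frac{k}{M^2 k^n}\,\|f\|^2 .
\]

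To solve $\bar\partial v=g_k$ with a good bound we use, on $L^k=L^{k-1}\otimes L$, the singular weight $\psi_k:=(k-1)\phi+\phi_s$, where $\phi_s$ is the strictly positive singular metric on $L$ supplied by the bigness of $L$ (smooth on $X\setminus E$, and $x_0\notin E$). Since $\phi$ is semipositive and $\phi_s$ strictly positive, $i\partial\bar\partial\psi_k\ge\varepsilon_0\,\omega$ everywhere on $X$, while $i\partial\bar\partial\psi_k\gtrsim k\,\omega$ on a fixed neighbourhood of $x_0$ because $\phi$ has strictly positive curvature there. As $g_k$ is supported in that neighbourhood, Hörmander's theorem — applied on the projective, hence Kähler, manifold $X$, using the usual exhaustion of $X\setminus E$ by complete Kähler metrics — produces $v$ with $\bar\partial v=g_k$ and
\[
\int_X|v|^2e^{-\psi_k}\ \lesssim\ \frac1k\int_X|g_k|^2e^{-\psi_k}\ \asymp\ \frac1k\int_X|g_k|^2e^{-k\phi}\ \lesssim\ \frac1{M^2 k^n}\,\|f\|^2 ,
\]
using that $e^{-\psi_k}\asymp e^{-k\phi}$ on $\operatorname{supp}g_k$. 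The section $s_k:=\sigma_k-v$ is holomorphic, and finiteness of $\int|v|^2e^{-\psi_k}$ against the singular weight forces $v$, hence $s_k$, to extend as a genuine holomorphic section of $L^k$ across $E$.

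It remains to read off the estimates. Write $s_k=f_k e_L^k$ and $v=v_{\mathrm{loc}}e_L^k$ near $x_0$; on $\{|z|<M/\sqrt k\}$, where $\chi\equiv1$, we have $f(\sqrt k z)-f_k=v_{\mathrm{loc}}$, hence
\[
\int_{|z|<M/\sqrt k}|f(\sqrt k z)-f_k|^2e^{-k|z|^2}\ \asymp\ \int_{|z|<M/\sqrt k}|v|^2\,e^{-k\phi}\ \lesssim\ \int_X|v|^2e^{-\psi_k}\ \lesssim\ \frac1{M^2}\,\|f\|^2/k^n ,
\]
which is the first assertion. For the second, $|s_k|^2_\phi\lesssim\chi^2|f(\sqrt k z)|^2e^{-k|z|^2}+|v|^2e^{-k\phi}$; integrating over $\{|z|>M/\sqrt k\}$, the second term is bounded as above, and the first contributes at most $k^{-n}\int_{|w|>M}|f|^2e^{-|w|^2}$, which is $\lesssim\|f\|^2/M^2$ for $f$ of Gaussian type — polynomials and reproducing kernels of $\mathcal{BF}^2$, the only cases actually needed, with the general case following by density (and with $o(\|f\|^2)$ in place of $\|f\|^2/M^2$ for arbitrary $f$, which suffices for that approximation). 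Finally $\|s_k\|^2\simeq\|f\|^2/k^n$: the upper bound follows from the triangle inequality together with $\int_X\chi^2|f(\sqrt k z)|^2e^{-k\phi}\le k^{-n}\|f\|^2$ and the bound on $\int|v|^2e^{-\psi_k}$; the lower bound from $\|s_k\|^2\ge\int_{|z|<M/\sqrt k}|f_k|^2e^{-k\phi}\gtrsim k^{-n}\!\int_{|w|<M}|f|^2e^{-|w|^2}-O(k^{-n}M^{-2})\gtrsim k^{-n}\|f\|^2$ once $M$ is large.

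The \emph{main obstacle} is the correct choice of weight for Hörmander's estimate: one must combine the semipositive smooth metric $\phi$ with the strictly positive singular metric $\phi_s$ so that simultaneously (i) the total curvature is strictly positive on all of $X$, so the estimate applies; (ii) it is as large as $k$ near $x_0$, so the $1/M^2$ gain survives the rescaling; and (iii) the resulting $L^2$-solution is still a holomorphic section of $L^k$ across the exceptional analytic set $E$. Once this is in place, the remainder is the routine rescaling computation sketched above.
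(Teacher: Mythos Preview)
Your approach coincides with the paper's: cut off the rescaled $f$ near $x_0$ and correct by solving $\bar\partial$ with a hybrid weight built from the smooth semipositive $\phi$ and the singular strictly positive $\phi_s$. The one genuine gap is in the specific choice $\psi_k=(k-1)\phi+\phi_s$. The $L^2$ estimate you invoke (Demailly--Nadel, as stated in the paper) is for $(n,1)$-forms with values in a line bundle, not for $(0,1)$-forms; to apply it you must write $L^k=K_X\otimes F_k$ and verify positivity of $F_k$, whose curvature is $c(\psi_k)-c(K_X)\ge\varepsilon\omega-c(K_X)$. A single copy of $\phi_s$ contributes only the fixed lower bound $\varepsilon\omega$, which in general does not dominate $c(K_X)$, so the estimate need not apply globally. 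The ``exhaustion of $X\setminus E$ by complete K\"ahler metrics'' handles the singularity of $\phi_s$ along $E$ but does not remove the canonical-bundle obstruction.

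The paper addresses this explicitly by taking $\widetilde\phi_k=(k-N)\phi+N\phi_s-C$ with $N$ a large constant independent of $k$, so that $c(F_k)\ge N\varepsilon\omega-c(K_X)>0$; the additive $-C$ then ensures $\widetilde\phi_k\le k\phi$, which you also need (and use) to pass from the $\widetilde\phi_k$-norm of the correction back to the $k\phi$-norm. Your ``main obstacle'' paragraph identifies exactly the right three requirements, but your weight as written fails (i) after the twist by $K_X^{-1}$; replacing $\psi_k$ by the paper's $\widetilde\phi_k$ repairs this with no other change to your argument.
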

Thus, in a sense, $s_k$ are global sections that approximate $f$ around $x$.

This Lemma follows from the $L^2$, $\bar\partial$-estimates on line
bundles for singular metrics. This is a refinement of H\"ormander's theorem that
is due to Demailly-Nadel, see \cite{Berndtsson10} where
a nice exposition can be found. We will use the following theorem.

\begin{thm}[Demailly-Nadel] Let $X$ be a projective manifold. Let $L$ be a
holomorphic line bundle over $X$ which has a possibly singular metric $\phi_s$
whose curvature satisfies 
\[
 i\ddbar \phi_s \ge \varepsilon \omega,
\]
where $\omega$ is a K\"ahler form. Let $f$ be an $L$-valued $\dbar$-closed form
of bidegree $(n,1)$. Then there is a solution $u$ to the equation $\dbar u=f$
satisfying
\[
 \|u\|_{\omega,\phi_s}^2\lesssim  \int_X |f|_{\ddbar \phi_s}^2 e^{-\phi_s}.
\] 
\end{thm}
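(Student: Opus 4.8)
The plan is to reduce the statement to the case of a \emph{smooth} weight, for which the classical H\"ormander $L^2$ existence argument built on the Bochner--Kodaira--Nakano (BKN) identity applies, and then to pass to the singular weight $\phi_s$ by a regularization-and-weak-limit procedure. Since $X$ is projective it is compact, so we never have to worry about completeness of the metric or about density of smooth forms in the domains of $\dbar$ and $\dbar^*$. Throughout, $\Lambda$ denotes the contraction with the K\"ahler form $\omega$, and for an $L$-valued $(n,1)$-form $g$ and a smooth weight $\psi$ with $i\ddbar\psi\ge\varepsilon'\omega$ we use that $[i\ddbar\psi,\Lambda]$ acts on $(n,1)$-forms as a positive definite endomorphism, with $\langle[i\ddbar\psi,\Lambda]^{-1}g,g\rangle=|g|^2_{\ddbar\psi}$.

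\emph{The smooth case.} Fix such a $\psi$ and regard $\dbar$ as a densely defined closed operator from $L^2_{(n,0)}(L,e^{-\psi})$ to $L^2_{(n,1)}(L,e^{-\psi})$. By the usual duality (Riesz representation applied to the functional $\dbar^*g\mapsto\langle g,f\rangle$), solving $\dbar u=f$ with the desired bound is equivalent to the a priori estimate $|\langle f,g\rangle|^2\le\bigl(\int_X|f|^2_{\ddbar\psi}e^{-\psi}\bigr)\,\|\dbar^*g\|^2$ for every $g\in\mathrm{Dom}(\dbar^*)$ of bidegree $(n,1)$. To prove it, write $g=g_1+g_2$ with $g_1\in\ker\dbar$ and $g_2\in(\ker\dbar)^\perp\subseteq\ker\dbar^*$; since $f$ is $\dbar$-closed we have $\langle f,g\rangle=\langle f,g_1\rangle$ and $\dbar^*g=\dbar^*g_1$, while $\dbar g_1=0$. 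Cauchy--Schwarz with respect to the curvature endomorphism gives $|\langle f,g_1\rangle|^2\le\bigl(\int_X|f|^2_{\ddbar\psi}e^{-\psi}\bigr)\bigl(\int_X\langle[i\ddbar\psi,\Lambda]g_1,g_1\rangle\,e^{-\psi}\bigr)$, and the BKN (Nakano) inequality---valid on $\mathrm{Dom}(\dbar)\cap\mathrm{Dom}(\dbar^*)$ by the Friedrichs density lemma on the compact $X$---yields $\int_X\langle[i\ddbar\psi,\Lambda]g_1,g_1\rangle\,e^{-\psi}\le\|\dbar g_1\|^2+\|\dbar^*g_1\|^2=\|\dbar^*g\|^2$. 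Combining the two displays gives the a priori estimate, hence a solution $u$ with $\|u\|^2_{\omega,\psi}\le\int_X|f|^2_{\ddbar\psi}e^{-\psi}$.

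\emph{The singular case, and the main obstacle.} Choose smooth weights $\phi_\nu\downarrow\phi_s$ with $i\ddbar\phi_\nu\ge\varepsilon'\omega$ for all $\nu$; this is possible by Demailly's regularization of the quasi-plurisubharmonic local potentials of $\phi_s$ on the compact K\"ahler manifold $X$, and is especially transparent here because $\phi_s$ has analytic singularities along $E$. For each $\nu$ the smooth case provides $u_\nu$ with $\dbar u_\nu=f$ and $\|u_\nu\|^2_{\omega,\phi_\nu}\le\int_X|f|^2_{\ddbar\phi_\nu}e^{-\phi_\nu}$. Fixing $\mu$ and letting $\nu\ge\mu$, the monotonicity $\phi_\nu\le\phi_\mu$ gives $\int_X|u_\nu|^2e^{-\phi_\mu}\le\int_X|u_\nu|^2e^{-\phi_\nu}\le\int_X|f|^2_{\ddbar\phi_\nu}e^{-\phi_\nu}$, a bound uniform in $\nu\ge\mu$. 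Extracting, diagonally over $\mu$, a weak limit $u$ of the $u_\nu$ in each $L^2(e^{-\phi_\mu})$, we get $\dbar u=f$ (the equation passes to weak limits) and $\int_X|u|^2e^{-\phi_\mu}\le\liminf_\nu\int_X|f|^2_{\ddbar\phi_\nu}e^{-\phi_\nu}$ for every $\mu$; letting $\mu\to\infty$ and using monotone convergence on the left, $\int_X|u|^2e^{-\phi_s}\le\liminf_\nu\int_X|f|^2_{\ddbar\phi_\nu}e^{-\phi_\nu}$. The step I expect to be the main obstacle is to bound this last $\liminf$ by $\int_X|f|^2_{\ddbar\phi_s}e^{-\phi_s}$: one needs the regularization chosen so that the curvature forms $i\ddbar\phi_\nu$ converge to $i\ddbar\phi_s$ in a way that forces $|f|^2_{\ddbar\phi_\nu}\to|f|^2_{\ddbar\phi_s}$ (or at least an upper bound by it) on $X\setminus E$, with the contribution near $E$ negligible. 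This is precisely where the fine properties of Demailly's regularization and the analytic-singularity structure of $\phi_s$ are used; a crude $\varepsilon'$-dependent estimate would only yield a bound of the form $\lesssim\int_X|f|^2_\omega e^{-\phi_s}$, which is weaker than the curvature-weighted bound near $E$.
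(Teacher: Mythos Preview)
The paper does not prove the Demailly--Nadel theorem; it is quoted as a black box with a pointer to Berndtsson's exposition \cite{Berndtsson10}, and only the consequence stated immediately after it (namely, $\|u\|^2_{\omega,\phi_s}\lesssim M^{-1}\|f\|^2_{\omega,\phi_s}$ when $i\ddbar\phi_s\ge M\omega$ on $\operatorname{supp} f$) is actually used in the proof of Lemma~\ref{lemma_approx}. So there is nothing in the paper to compare your argument against.

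That said, your outline is the standard one, and the smooth case is handled correctly. Your honest identification of the ``main obstacle'' is accurate: Demailly's regularization gives $i\ddbar\phi_\nu\ge(\varepsilon-\delta_\nu)\omega$ with $\delta_\nu\to 0$ and $\phi_\nu\downarrow\phi_s$, but it does \emph{not} in general give pointwise convergence of $[i\ddbar\phi_\nu,\Lambda]^{-1}$ to $[i\ddbar\phi_s,\Lambda]^{-1}$, which is what you would need to pass from $\int_X|f|^2_{\ddbar\phi_\nu}e^{-\phi_\nu}$ to $\int_X|f|^2_{\ddbar\phi_s}e^{-\phi_s}$. The way this is usually circumvented is to run the a~priori estimate directly for the singular weight: one checks that the Bochner--Kodaira--Nakano inequality with weight $\phi_s$ still holds for test forms $g$ that are smooth and compactly supported away from the singular set $E$ (which is a proper analytic subset, hence of measure zero and of real codimension $\ge 2$), and then argues that such forms are dense in $\mathrm{Dom}(\dbar)\cap\mathrm{Dom}(\dbar^*)$ for the graph norm. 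The regularization route you sketched can also be pushed through, but one then has to choose the approximants more carefully (equisingular approximation, or approximation that preserves the full curvature lower bound on a large set), which is precisely the ``fine properties'' you allude to.

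For the purposes of this paper, note that only the cruder bound $\|u\|^2\lesssim(\varepsilon')^{-1}\int_X|f|^2_\omega e^{-\phi_s}$ is ever invoked (with $\varepsilon'\gtrsim k$ on the support of the data), and your argument already delivers that: from $i\ddbar\phi_\nu\ge\varepsilon'\omega$ one has $|f|^2_{\ddbar\phi_\nu}\le(\varepsilon')^{-1}|f|^2_\omega$, and then $e^{-\phi_\nu}\le e^{-\phi_s}$ handles the weight. So the gap you flag is real for the sharp curvature-weighted statement, but immaterial for the application at hand.
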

In this statement $|f(x)|_{\ddbar\phi_s}$ is the pointwise norm on $(n,1)$ forms
induced by
the singular Hermitian metric in $X$.
In particular if we have the estimate $i\ddbar
\phi_s \ge M \omega$ in the support of $f$, then 
\begin{equation}
 \|u\|_{\omega,\phi_s}^2\lesssim
\frac 1{M} \|f\|_{\omega,\phi_s}^2.
\end{equation}
We prove now the approximation lemma.
\begin{proof}[Proof of Lemma \ref{lemma_approx}]
Let $\chi$ be a cutoff function supported in a ball of radius $M$ centered at
the origin and equal to $1$ in $B(0,M/2)$. We take $M$ so big that  $|\nabla
\chi |\le 4/M.$ We put $\chi_k(z)=\chi(z\sqrt{k})$. We define in normal
coordinates $g_k(z)=f(\sqrt{k}z)\chi_k(z)e_k(z)$. The section $g_k$ (extended
by $0$ outside a neighborhood of $x_0$) defines a global (non-holomorphic)
section with the required properties. To make it holomorphic we must correct it
with the equation $\dbar u_k = \dbar g_k$ and define $s_k=g_k-u_k$. We need to
make sure that the correction $u_k$ is globally small. 

One technical difficulty arises: the H\"ormander estimates for the
$\dbar$-equation deal with $(n,1)$-forms rather than $(0,1)$-forms. We can
always twist the line bundle $L$ with the canonical bundle to shift from 
$(0,1)$-forms to $(n,1)$-forms. In this case this is delicate because while
twisting the bundle we could lose its positivity since $L$ is only semipositive
and there is no maneuvering room. For this purpose we will need to change the
metric on $L$ to make it strictly positive while preserving the estimates in
the original metric. This can be achieved by averaging the original metric
$\phi$ on $L$ with the metric $\phi_s$ that is singular and strictly positive
on $L$. That is the reason we need to work with the more
sophisticated Demailly-Nadel estimates on singular metrics rather than the
H\"ormander estimates. More precisely, let us define a new metric
$\widetilde\phi_k$ on $L^k$ as follows:
\begin{equation}\label{metric}
  \widetilde\phi_k= (k-N) \phi + N \phi_s - C,
\end{equation}
where $N$ and $C$ are big constants, that do not depend on $k$, to be chosen.
This is a well defined singular metric on $L^k$ since $\widetilde\phi_k= k\phi
+N (\phi_s-\phi)-C$ and the difference of two metrics $\phi_s-\phi$ is a well
defined function on $X$. 

The bundle $L^k$ can be expressed as $L^k=K_X\otimes F_k$, where $K_X$ is the
canonical line bundle. If we endow $L^k$ with the metric $\widetilde \phi_k$ and
$K_X$ with the metric inherited from the Hermitian metric on $X$, the curvature
of $F_k$ is
\[
c(F_k)=c(\widetilde \phi_k)-c(K_X)= (k-N)c(\phi) + N c(\phi_s)
-c(K_X) \ge N\varepsilon \omega - c(K_X),
\]
if $k>N$ and thus it has positive curvature taking $N$ big enough,
where by $c(\cdot)$ we denote here the curvature form of the corresponding
line bundle or metric specified. In fact  on the support of $\dbar g_k$ the
curvature satisfies $c(F_k)\gtrsim k \omega$.

The metric $\phi_s$ is bounded above because it is in $L^1_{loc}$ and it is
plurisubharmonic. Thus we can take the constant $C$ big enough in \eqref{metric}
in such a way that $\widetilde\phi_k\le k\phi$.

The $L^2$ norm of $\dbar g_k$ with the metric $\widetilde \phi_k$ is comparable
to the $L^2$ norm with respect to the metric $k\phi$ because  $\phi_s$ is 
smooth on the support of $\dbar g_k$, thus its norm is bounded by $k^{1-n}M^{-2}
\|f\|^2$. If we solve the $\dbar$ equation using the estimates provided by the
Demailly Nadel theorem with data that is a $(n,1)$-form with values in $F_k$ we
get a solution $u_k$ to $\dbar u_k =\dbar g_k$ ($u_k$ is a global $(n,0)$-form
with values in $F_k$ or equivalently a global section of $L^k$) with $L^2$ size
controlled by a constant times $k^{-n}M^{-2} \|f\|^2$ as desired. A priori the
norm control of $u_k$ is with respect to $\widetilde\phi_k$ but as
$\widetilde\phi_k\le k\phi$ we get the desired result.
\end{proof}

We proceed now to prove the sampling part of Theorem~\ref{thmfock}. Given
any function
$f$ in the
Fock space we take a large $M>0$ so that
\[
\int_{|z|>M} |f|^2e^{-|z|^2} \le 0.1 \|f\|^2.
\]
We can construct a sequence of sections $s_k$ such that
the conclusions of the approximation lemma hold.
For such $s_k$ the sampling property of $\Lambda_k$ can be applied and we have 
\[
 \|s_k\|^2\lesssim \frac 1{k^n}\sum_{\lambda\in \Lambda_k}
|f_k(\lambda)|^2e^{-k\phi(\lambda)}.
\]
Since all the $f_k$ have $L^2$ norm very small outside the region parametrized
by $|z|<M/\sqrt{k}$ which we denote by $U_k$ the mean value property implies
that
\[
\|s_k\|^2\lesssim \frac 1{k^n}\sum_{\lambda\in \Lambda_k\cap U_k}
|f_k(\lambda)|^2e^{-k\phi(\lambda)}.
\]
We recall that $k^n\|s_k\|^2\simeq \|f\|^2$, and taking weak limits of
$\Sigma_k$, implies that
\[
 \|f\|^2 \lesssim \sum_{|\sigma|\le M} |f(\sigma)|^2 e^{-|\sigma|^2}.\qed
\]

\section{The one-dimensional case}\label{onedim}

In this section we return to discuss positive line bundles, and
focus on the case when $\dim(X)=1$, i.e. we are dealing with a compact Riemann 
surface. In this case we have a more precise result, namely a 
full characterization of the interpolation and
sampling arrays given by Theorem \ref{iff_conditions} above.

\subsection{}
The sampling part of Theorem \ref{iff_conditions} can be reformulated as follows.

\begin{thm}\label{sufsampling}
Let  $\Lambda$ be a separated array and let $L$ be a holomorphic line bundle
over a compact Riemann surface $X$ endowed with a smooth positive metric
$\phi$. Then $\Lambda$ is a sampling array for the line bundle if and only if there
is an $\varepsilon>0$, $r>0$ and $k_0>0$ such that for all
$k\ge k_0$, 
\begin{equation}\label{updensity} \frac{\# (\Lambda_k \cap B(x,
r/\sqrt{k}))}{\int_{B(x, r/\sqrt{k})}ik\ddbar \phi } > \frac 1\pi
+\varepsilon
\qquad\forall x\in X, 
\end{equation} 
 
\end{thm}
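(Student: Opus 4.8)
The plan is to reduce the statement to the classical sampling theory of the Bargmann--Fock space $\mathcal{BF}^2(\Cbb)$ by a blow-up argument at a point. Since $L$ is positive and $\dim X=1$, the curvature is strictly positive at \emph{every} point of $X$, so around an arbitrary $x_0\in X$ one may introduce normalized coordinates (Definition~\ref{normalized}) and rescale $\Lambda_k$ by $\sqrt k$ to obtain sets $\Sigma_k\subset\Cbb$ as in Section~\ref{simultaneous}. Using \eqref{bergman_kernel_eigenvalues} and \eqref{curvature_eigenvalues} one checks that, evaluated at the fixed scale $r/\sqrt k$ (so that the underlying ball shrinks as $k\to\infty$ and the curvature is asymptotically constant on it), the ratio on the left of \eqref{updensity} is, up to the normalization dictated by those formulas, the counting function of $\Sigma_k$ on a disc of radius proportional to $r$ divided by its Euclidean area. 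Consequently, \eqref{updensity} holds for $\Lambda$ if and only if every weak limit $\Sigma$ of a subsequence of the $\Sigma_k$, for every choice of centre $x_0$, has lower Beurling density at least $\tfrac1\pi+\varepsilon$ with a margin $\varepsilon>0$ uniform over all such choices; for a separated array this is in turn equivalent to $D^-(\Lambda)>\tfrac1\pi$.

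For the ``only if'' part, assume $\Lambda$ is sampling. By Theorem~\ref{thmfock}, applied with the blow-up centred at any $x_0$, every weak limit $\Sigma$ is a sampling sequence for $\mathcal{BF}^2(\Cbb)$, and hence $D^-(\Sigma)>\tfrac1\pi$ by the density theorem of Seip and Wallst\'en. To obtain the uniform margin demanded by \eqref{updensity} one passes to a further weak limit: the family of all blow-up limits is weakly sequentially compact with a common separation constant, and a weak limit of sampling sequences is again sampling with a controlled sampling constant (the same stability that underlies Theorem~\ref{thmfock}); so if there were blow-up limits with densities tending to $\tfrac1\pi$, a diagonal argument would produce a sampling sequence of exactly critical density, which is impossible.

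For the ``if'' part I would argue by contradiction. Suppose \eqref{updensity} holds but $\Lambda$ is not sampling. Since the left-hand (Plancherel--P\'olya) inequality in the definition of sampling is automatic for separated arrays by Lemma~\ref{LemmaPlancherelPolya}, the failure is of the right-hand inequality, so there are $k_j\to\infty$ and sections $s_j\in H^0(L^{k_j})$ with $\int_X|s_j|^2=1$ and $\delta_j:=k_j^{-1}\sum_{\lambda\in\Lambda_{k_j}}|s_j(\lambda)|^2\to0$. Partition $X$ into coordinate patches of size $\simeq L_j/\sqrt{k_j}$, where $L_j\to\infty$ is chosen growing slowly enough (in terms of $\delta_j$ and $k_j$) that $L_j=o(\sqrt{k_j})$ and $L_j^2\sqrt{\delta_j}\to0$. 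A pigeonhole argument, using that there are $\simeq k_j/L_j^2$ patches while $\int_X|s_j|^2=1$ and the total sampling sum is $o(k_j)$, singles out a patch where, after rescaling by $\sqrt{k_j}$, the section carries Bargmann--Fock mass $\gtrsim L_j^2$ but the nodes of $\Lambda_{k_j}$ in it contribute negligibly compared with that mass. Recentring that patch at a point where the rescaled section attains its weighted maximum, and absorbing the resulting linear factor by a gauge transformation, one obtains holomorphic functions $\check g_j$ on discs of radius $\simeq L_j$, bounded in the $\mathcal{BF}^\infty$ sense and of modulus $1$ at the origin; a normal families argument then gives $\check g_j\to\check g$, a nonzero entire function in $\mathcal{BF}^\infty(\Cbb)$, and $\check g$ vanishes on the blow-up limit $\check\Sigma$ of the recentred $\Sigma_{k_j}$ because over each fixed disc the sampling sum has boundedly many terms and tends to zero. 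On the other hand \eqref{updensity} forces $\#\big(\check\Sigma\cap B(0,t)\big)\ge(1+c)t^2$ for all large $t$ with some $c>0$; but by Jensen's formula a nonzero function of $\mathcal{BF}^\infty(\Cbb)$ cannot vanish at so many points of $B(0,t)$, which is the uniqueness half of the Seip--Wallst\'en theorem, and this contradiction proves that $\Lambda$ is sampling.

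The hard part will be the ``if'' direction, specifically extracting the nonzero Bargmann--Fock limit in the face of the fact that a unit section $s_j$ need not concentrate anywhere: the remedy is the combination of the pigeonhole over patches, which locates a patch where $s_j$ keeps a definite share of its mass while the nodes there see almost none of it, with the recentring at the weighted maximum, which renormalizes that local mass into a uniform $\mathcal{BF}^\infty$-bound and a nonzero value at the origin. The delicate point is that the patch size $L_j$ must be sent to infinity slowly relative to the \emph{unknown} rate $\delta_j\to0$ while still shrinking the patches, and one must check that the hypothesis \eqref{updensity} remains effective after recentring at a point that may lie near the edge of the chosen patch. A secondary technical burden, in the ``only if'' direction, is the compactness argument that converts ``each individual blow-up limit is a sampling sequence'' into a single uniform density margin.
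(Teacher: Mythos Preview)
Your overall strategy is sound and both directions can be made to work, but your sufficiency argument takes a genuinely harder road than the paper's.

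For the ``only if'' direction you and the paper do essentially the same thing: invoke Theorem~\ref{thmfock} so that every weak limit $\Sigma$ is sampling for $\mathcal{BF}^2(\Cbb)$, use Seip--Wallst\'en to get $D^-(\Sigma)$ strictly above the critical value, and then run a double--limit/compactness argument to upgrade this to a uniform margin. The paper packages the last step differently (it shows $\{\Lambda_{(1-\varepsilon)k}\}$ is still sampling and then quotes the non--strict density inequality), but the content is the same.

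For the ``if'' direction the paper avoids your pigeonhole entirely by making a detour through $L^\infty$. It first shows that $\Lambda$ is an $L^\infty$\nobreakdash-sampling array: if not, there are sections $s_k$ with $\sup_X|s_k|=|s_k(x_k)|=1$ and $\sup_{\Lambda_k}|s_k(\lambda)|\to0$, and one blows up at $x_k$ directly --- no localization is needed because the global maximum already hands you the centre, the $\mathcal{BF}^\infty$ normalization, and the nonvanishing at $0$ simultaneously. The passage from $L^\infty$\nobreakdash-sampling to $L^2$\nobreakdash-sampling is then handled by a separate lemma (Proposition~\ref{saltsampling}), which multiplies by a normalized Bergman--kernel factor exactly as in the Fekete argument of Lemma~\ref{LemFekSamInt}; finally the openness of \eqref{updensity} lets one apply this to $\{\Lambda_{(1-\varepsilon)k}\}$ and recover $L^2$\nobreakdash-sampling for $\Lambda$ itself. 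Your direct $L^2$ route is correct in outline --- with the heavy/light patch pigeonhole one does find a patch where $b_p/a_p\leq 2k_j\delta_j$ and $a_p\gtrsim L_j^2/k_j$, whence $\max_\lambda|s_j(\lambda)|^2/M_p^2\lesssim \delta_jL_j^2\to0$ by your choice of $L_j$ --- but it costs you the patch--edge issue you flag and the careful calibration of $L_j$ against the unknown rate $\delta_j$, all of which the $L^\infty$ detour sidesteps. The trade--off is that the paper needs the extra transfer lemma (Proposition~\ref{saltsampling}) and the index shift, while your argument, once the pigeonhole is nailed down, stays within $L^2$ throughout.
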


Remark that the metric in $X$ used to define the balls in the inequality 
\eqref{updensity} is irrelevant, since the density inequality is invariant
under change of metric. We will prove
this invariance in an arbitrary dimension. Assume that we have two different
metrics that induce two distances $d_1$ and $d_2$ and two volumes $V_1$ and
$V_2$. Suppose that \eqref{updensity}
holds for the first metric. Denote by $\mu_k:=\frac 1{k^n}\sum_{\lambda \in
\Lambda_k}
\delta_\lambda$ and $\nu:= (i\ddbar \phi)^n$. The hypothesis \eqref{updensity}
(in dimension $n$) can we written as 
\begin{equation}\label{ineqmeasures}
 \int_{B_1(y,r/\sqrt{k})} d\mu_k(x) \ge \bigl(\frac 1{\pi^n n!}
+\varepsilon\bigr)
\int_{B_1(y,r/\sqrt{k})} d\nu.
 \end{equation}
We need some notation to check that  \eqref{ineqmeasures} is invariant under
change of metrics. Denote by 
\[
 \widetilde f_r(z):= \frac 1{\lambda_0(r/\sqrt{k})} \int_{B_1(z,r/\sqrt{k})}
f(y) dV_1(y)=\frac 1{\lambda_0(r/\sqrt{k})} \int_{X} f(y)
\1_{B_1(y,r/\sqrt{k})}(z) 
dV_1(y),
\]
where $\lambda_0(r)$ denotes as in \cite{Blumlinger90} the volume of a Euclidean
ball of radius $r$ in $\Rbb^{2n}$. Thus
\[
 \int_X \widetilde f_r\,d\mu_k=\frac 1{\lambda_0(r/\sqrt{k})} \int_X
f(y)\mu_k(B_1(y,r/\sqrt{k}))dV_1(y).
\]
For any $f\ge 0$, we have by \eqref{ineqmeasures}
\[
 \int_X \widetilde f_r  \, d\mu_k\ge \bigl(\frac 1{\pi^n n!}
+\varepsilon\bigr) \int_X \widetilde f_r \, d\nu.
\]
We choose $f:=\1_{B_2(x,R/\sqrt{k})}$, then
\[
 \1_{B_2(x,(R-cr)/\sqrt{k})}\le f_r \le \1_{B_2(x,(R+cr)/\sqrt{k})},
\]
where 
\[
 f_r(z):=\frac 1{V_1(B_1(z,r/\sqrt{k}))} \int_{B_1(z,r/\sqrt{k})} f(y) dV_1(y).
\]
The following inequalities are now elementary:
\[
\begin{split}
\mu_k(B_2(x,(R+cr)/\sqrt{k}))\ge \int_X f_r d\mu_k=\int_X \widetilde f_r d\mu_k
+\int_X (f_r-\widetilde f_r)d\mu_k\ge \\
\bigl(\frac 1{\pi^n n!}
+\varepsilon\bigr) \int_X \widetilde f_r d\nu + \int_X (f_r-\widetilde
f_r)d\mu_k=\\
\bigl(\frac 1{\pi^n n!} +\varepsilon\bigr) \int_X f_r d\nu +
\bigl(\frac 1{\pi^n n!} +\varepsilon\bigr) \int_X (\widetilde f_r -f_r)d\nu +
\int_X (f_r-\widetilde f_r)d\mu_k\ge\\
\bigl(\frac 1{\pi^n n!} +\varepsilon\bigr)\nu(B_2(x,(R-cr)/\sqrt{k}))+ 
\bigl(\frac 1{\pi^n n!} +\varepsilon\bigr)\int_X(\widetilde f_r -f_r)d\nu +
\int_X (f_r-\widetilde f_r)d\mu_k.
\end{split}
\]
We aim to prove that 
\begin{equation}\label{des}
\mu_k(B_2(x,(R+cr)/\sqrt{k}))\ge\Bigl(\frac 1{\pi^n n!}
+\frac{\varepsilon}2 \Bigr)\nu\bigl(B_2(x,(R+cr)/\sqrt{k})\bigr).
\end{equation}
Clearly if $R$ is big enough ($R>>cr$), then by \eqref{shell_volume}:
\[
 \nu\Bigl(\Bigl\{ y: \frac{R-rc}{\sqrt{k}}\le  d_2(y,x)\le
\frac{R+cr}{\sqrt{k}}\Bigr\}\Bigr)
 \le \frac{\varepsilon}4 \nu\bigl(B_2(x,(R-rc)/\sqrt{k})\bigr).
\]
We still need to prove that the terms $\bigl(\frac 1{\pi^n n!}
+\varepsilon\bigr)
\int_X(\widetilde f_r -f_r)d\nu +
\int_X (f_r-\widetilde f_r)d\mu_k$ are negligible when compared to
$\nu\bigl(B_2(x,(R-rc)/\sqrt{k})\bigr)\simeq R^{2n}/k^n$ as $k\to\infty$.

Observe that $|f_r-\widetilde f_r|\le K_1(r/\sqrt{k}) f_r$, where
$K_1(s)=\sup_X |1-V_1\bigl(B_1(x,s)\bigr)/\lambda_0(s)|$.
The distortion function $K_1(s)=O(s^2)$ \cite[Lemma~2]{Blumlinger90}, and thus
\[
 \Bigl| \int_X (f_r-\widetilde f_r)d\nu\Bigr| \le K_1\Bigl(\frac
r{\sqrt{k}}\Bigr)\int_X f_rd\nu\le
 K_1\Bigl(\frac r{\sqrt{k}}\Bigr) \nu
\Bigl(B\Bigl(x,\frac{R+cr}{\sqrt{k}}\Bigr)\Bigr)\lesssim \frac 1{k^{n+1}}.
\]
We assume that $\Lambda$ is separated, thus
$\mu_k\bigl(\bigl(x,\frac{R+cr}{\sqrt{k}}\bigr)\bigr)\lesssim 
\frac{R^{2n}}{k^n}$, and therefore 
\[
 \Bigl| \int_X (f_r-\widetilde f_r)d\mu_k\Bigr| \le K_1\Bigl(\frac
r{\sqrt{k}}\Bigr)\int_X f_rd\nu\le
 K_1\Bigl(\frac r{\sqrt{k}}\Bigr) \mu_k
\Bigl(B\Bigl(x,\frac{R+cr}{\sqrt{k}}\Bigr)\Bigr)\lesssim \frac 1{k^{n+1}},
\]
and if we take $k$ big enough, we have proved \eqref{des} and the invariance 
of the density condition under changes of metric follows.

\subsection{}
We proceed now to the proof of Theorem~\ref{sufsampling}. We start by proving
that under the density hypothesis \eqref{updensity} the array $\Lambda$ is
sampling. We will initially prove that the array is $L^\infty$-sampling.

\begin{definition}
We say that a separated array $\Lambda=\{\Lambda_k\}$ is an
$L^\infty$-\emph{sampling array} if there is $k_0$ and 
a constant $0<C<\infty$ such that, for each
$k \geq k_0$ and any section $s\in \HoLk$ we have
\[
\sup_{x \in X} \big | s(x)\big| \leq 
C \sup_{\lambda\in \Lambda_k} |s(\lambda)|.
\]
\end{definition}

 If this were not true then for infinitely many $k$'s there will be
$s_k\in H^0(L^k)$ and points $x_k\in X$ such that
\[
 \sup_X |s_k|=|s_k(x_k)|=1,
\]
and 
\[
 \sup_{\lambda\in\Lambda_k} |s_k(\lambda)|=o(1).
\]
We take normal coordinates around $x_k$, see Definition~\ref{normalized}, and we
consider as before arrays
$\Lambda_k\subset X$ and $\Sigma_k\subset B(0,M_k)$ the dilated sequences in
$\Cbb$. Since $\Sigma_k$ are separated there is a subsequence converging weakly
to
$\Sigma$ that for simplicity we keep denoting by $\Sigma_k$.  The hypothesis
implies that
\[
 \frac{\# \Sigma \cap B(y,r_0)}{r_0^2}\ge (\frac 1\pi +\varepsilon),
\]
the balls $B(y,r_0)$ are standard balls in $\Cbb$ because we may choose a metric
in $X$ such that when rescaling around $x$ by the normal coordinates it
converges to
the Euclidean metric in $\Cbb$. By a theorem of Seip and Wallsten, see 
\cite[Theorem~1.1]{SeiWal92} $\Sigma$ is sampling for
the
space of functions $\mathcal{BF}^\infty$ consisting of entire functions such
that $\sup |f|e^{-|z|^2}<+\infty$. On the other hand we may extract a converging
subsequence of functions $f_k$ that represent the sections $s_k$ in normal
coordinates to $f\in \mathcal{BF}^\infty$ such that $|f(0)|=1$ and
$f|_{\Sigma}=0$ and this is a contradiction with the fact that $\Sigma$ is
sampling for $\mathcal{BF}^\infty$.

Once we know that $\Lambda$ is $L^\infty$ sampling it is possible to argue as
with the Fekete points that $\{\Lambda_{(1+\varepsilon) k}\}$ is $L^2$ sampling.

\begin{proposition}\label{saltsampling}
 If $\Lambda=\{\Lambda_k\}$ is $L^\infty$ sampling then
$\{\Lambda_{(1+\varepsilon) k}\}$ is $L^2$
sampling.
\end{proposition}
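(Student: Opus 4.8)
The plan is to run the same scheme as in the sampling part of Lemma~\ref{LemFekSamInt}, with the $L^\infty$-sampling hypothesis on $\Lambda$ playing the role of the extremal property of the Fekete points. Write $\Lambda_k':=\Lambda_{(1+\varepsilon)k}$. We must produce constants $0<A\le B<\infty$ and a $k_0$ so that $Ak^{-n}\sum_{\lambda\in\Lambda_k'}|s(\lambda)|^2\le \int_X|s|^2\le Bk^{-n}\sum_{\lambda\in\Lambda_k'}|s(\lambda)|^2$ for all $k\ge k_0$ and all $s\in H^0(L^k)$. The lower inequality is immediate: an $L^\infty$-sampling array is by definition separated, so $\Lambda_{(1+\varepsilon)k}$ has its points at mutual distance $\gtrsim 1/\sqrt{(1+\varepsilon)k}\gtrsim 1/\sqrt{k}$, i.e.\ it is $\delta$-separated at level $k$ for a suitable $\delta<1$, and the Plancherel--P\'olya inequality (Lemma~\ref{LemmaPlancherelPolya}) gives $k^{-n}\sum_{\lambda\in\Lambda_k'}|s(\lambda)|^2\lesssim\int_X|s|^2$.

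For the sampling (upper) inequality, fix $s\in H^0(L^k)$ and a point $x\in X$, and introduce the normalized Bergman section of the auxiliary power $L^{\varepsilon k}$: by Lemma~\ref{LemBergmanSec} pick $\Phi_x^{\varepsilon k}\in H^0(L^{\varepsilon k})$ with $|\Phi_x^{\varepsilon k}(y)|=|\Pi_{\varepsilon k}(y,x)|$ for all $y$, and put
\[
Q_x(y):=\frac{\Phi_x^{\varepsilon k}(y)}{|\Pi_{\varepsilon k}(x,x)|}\in H^0(L^{\varepsilon k}),\qquad P_x(y):=s(y)\otimes Q_x(y)\in H^0(L^{(1+\varepsilon)k}).
\]
Then $|Q_x(x)|=1$, so $|P_x(x)|=|s(x)|$, while for $\lambda\in\Lambda_k'$ one has $|P_x(\lambda)|=|s(\lambda)|\,|\Pi_{\varepsilon k}(x,\lambda)|/|\Pi_{\varepsilon k}(x,x)|$, using the symmetry \eqref{EqBergmanSymm}. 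Applying the $L^\infty$-sampling property of $\Lambda$ at level $(1+\varepsilon)k$ to the section $P_x$ and evaluating at $x$ gives
\[
|s(x)|=|P_x(x)|\le\sup_{y\in X}|P_x(y)|\le C\sup_{\lambda\in\Lambda_k'}|P_x(\lambda)|=C\sup_{\lambda\in\Lambda_k'}|s(\lambda)|\,\frac{|\Pi_{\varepsilon k}(x,\lambda)|}{|\Pi_{\varepsilon k}(x,x)|},
\]
where $C$ is the $L^\infty$-sampling constant, which does not depend on $k$.

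From here the estimate is a direct computation: square, bound the supremum by the sum over $\lambda$, and use the lower bound $|\Pi_{\varepsilon k}(x,x)|\gtrsim(\varepsilon k)^n$ from \eqref{EstimateOnDiagonal} to get $|s(x)|^2\lesssim C^2(\varepsilon k)^{-2n}\sum_{\lambda\in\Lambda_k'}|s(\lambda)|^2\,|\Pi_{\varepsilon k}(x,\lambda)|^2$. Integrating over $x\in X$ and invoking the $L^2$-identity \eqref{EqBergmanDiag} together with \eqref{EqBergmanSymm}, which gives $\int_X|\Pi_{\varepsilon k}(x,\lambda)|^2\,dV(x)=|\Pi_{\varepsilon k}(\lambda,\lambda)|\lesssim(\varepsilon k)^n$, one obtains
\[
\int_X|s|^2\lesssim C^2(\varepsilon k)^{-2n}(\varepsilon k)^n\sum_{\lambda\in\Lambda_k'}|s(\lambda)|^2=C^2\,\varepsilon^{-n}k^{-n}\sum_{\lambda\in\Lambda_k'}|s(\lambda)|^2,
\]
which is the required bound with $B\lesssim C^2\varepsilon^{-n}$. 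As in Lemma~\ref{LemFekSamInt}, this uses only the on-diagonal asymptotics \eqref{EstimateOnDiagonal} and the $L^2$-identity \eqref{EqBergmanDiag}, and no off-diagonal decay of the Bergman kernel.

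The one place where the argument genuinely departs from the Fekete case is the passage from $|s(x)|=|P_x(x)|$ to a summable expression in the $|s(\lambda)|$: in Lemma~\ref{LemFekSamInt} one expands $P_x$ in the Lagrange basis attached to $\Fcal_{(1+\varepsilon)k}$ and lands directly on a sum $\sum_\lambda|s(\lambda)|\,|Q_\lambda(x)|$, whereas a general $L^\infty$-sampling array carries no such basis, so we can only say $|s(x)|\le C\sup_\lambda|P_x(\lambda)|$ and then crudely replace the supremum by the sum. The point to verify — and the only real obstacle — is that this crude step is still affordable: after integrating in $x$, the cost per point $\lambda$ is exactly $\int_X|\Pi_{\varepsilon k}(x,\lambda)|^2\,dV(x)=|\Pi_{\varepsilon k}(\lambda,\lambda)|\asymp(\varepsilon k)^n$, which is precisely of the order that the factor $(\varepsilon k)^{-2n}$ coming from $|\Pi_{\varepsilon k}(x,x)|^{-2}$ can absorb. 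The remaining bookkeeping — that $C$, and hence $A$ and $B$, is independent of $k$, so that $\{\Lambda_{(1+\varepsilon)k}\}$ is sampling with uniform constants — is immediate from the definition of an $L^\infty$-sampling array.
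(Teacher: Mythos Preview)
Your argument is correct, and it is a genuinely different route from the paper's. The paper uses the \emph{squared} normalized Bergman section $\bigl[\Phi_y^{(\varepsilon/2)k}/|\Pi_{(\varepsilon/2)k}(y,y)|\bigr]^2$ as the multiplier, first reproves that $\{\Lambda_{(1+\varepsilon)k}\}$ is itself $L^\infty$-sampling, then bounds the sup by the sum (of first powers) and integrates to obtain an $L^1$-type inequality $\int_X|s|\lesssim(\varepsilon k)^{-1}\sum_\lambda|s(\lambda)|$, and finally interpolates between the $L^\infty$ and $L^1$ estimates to reach $L^2$. You instead take only the \emph{first power} $\Phi_x^{\varepsilon k}/|\Pi_{\varepsilon k}(x,x)|$, square the resulting pointwise inequality, replace $\sup_\lambda a_\lambda^2$ by $\sum_\lambda a_\lambda^2$, and land directly on an expression for which the $L^2$-identity \eqref{EqBergmanDiag} applies termwise. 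This avoids the interpolation step altogether and is slightly more economical; the paper's detour through $L^1$ and $L^\infty$ has the mild advantage of giving all intermediate $L^p$ sampling inequalities at once, but that is not needed for the proposition as stated.
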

\begin{proof}
 We know by hypothesis that for any $s\in H^0(L^k)$, $\sup_X |s|\le C
\sup_{\Lambda_k} |s(\lambda_k)|$. In this case it is elementary to check that
$\{\Lambda_{(1+\varepsilon) k}\}$ is also $L^\infty$ sampling. For any  $s\in
H^0(L^k)$, and $y\in X$ we define the
section 
\[
p_y(x)=s(x)\otimes
\left[\frac{\Phi_y^{(\varepsilon/2)k}(x)}{|\Pi_{(\varepsilon/2)k}(y,y)|}\right]
^2\in H^0(L^{(1+\varepsilon)k})
\]
Let us take now $y\in X$ to be a point where $|s|$ attains its maximum. Then
\begin{equation}\label{acotada}
 \sup_X |s|= |s(y)|=|p_y(y)|\le C \sup_{\Lambda_{(1+\varepsilon) k}}
|p_y(\lambda)|\le
C\sup_{\Lambda_{(1+\varepsilon) k}} |s(\lambda)|.
\end{equation}

Moreover for any $z\in X$, since $\Lambda$ is sampling, 
\[
\begin{split}|s(z)|=|p_z(z)| \lesssim \sup_{\Lambda_{(1+\varepsilon) k}}
|s(\lambda)|
\left|\frac{\Phi_z^{(\varepsilon/2)
k}(\lambda)}{|\Pi_{(\varepsilon/2)k}(z,z)|}\right|^2\le
\sum_{\Lambda_{(1+\varepsilon) k}}|s(\lambda)|
\left|\frac{\Phi_z^{(\varepsilon/2)
k}(\lambda)}{|\Pi_{(\varepsilon/2)k}(z,z)|}\right|^2=\\
\sum_{\Lambda_{(1+\varepsilon) k}}|s(\lambda)|
\left|\frac{\Pi_{(\varepsilon/2)
k}(z,\lambda)}{|\Pi_{(\varepsilon/2)k}(z,z)|}\right|^2.
\end{split}
\]
Recall that $|\Pi_{(\varepsilon/2)k}(z,z)|\simeq \eps k$. Thus if we integrate both
sides, we get
\begin{equation}\label{integrable}
 \int_X |s(z)| \lesssim \frac1{\eps k} \sum_{\Lambda_{(1+\varepsilon)
k}} |s(\lambda)|
\end{equation}
If we interpolate between \eqref{acotada} and \eqref{integrable} we obtain 
\[
 \int_X |s(z)|^2 \lesssim \frac1{\eps k} \sum_{\Lambda_{(1+\varepsilon)
k}} |s(\lambda)|^2,
\]
as stated.
\end{proof}
Finally, since the hypothesis of Theorem~\ref{sufsampling} is an open condition
we can conclude that actually $\{\Lambda_{(1-\varepsilon) k}\}$ is
$L^\infty$-sampling
and therefore $\Lambda$ is $L^2$-sampling.

We turn now to the necessity of the density condition. We assume that  $\Lambda$
is a sampling array. We know already by
Corollary~\ref{densities} that the density of $\Lambda$ is bigger or equal
than a critical level. We need a strict inequality. We prove now that
if $\Lambda$ is a sampling array then there is
a $\varepsilon>0$ such that $\{\Lambda_{(1-\varepsilon)k}\}$ is still an
$L^2$-sampling array. 

 We know by Theorem~\ref{thmfock} than any weak limit $\Sigma\in W(\Lambda)$ is
a sampling sequence in 
$\mathcal{BF}^2(\Cbb)$. Thus by the description of sampling
sequences for such spaces obtained in \cite{SeiWal92}, the lower Beurling
density $D^-(\Sigma)>1$. We will prove that under this circumstances there is a
$\varepsilon>0$ such that $\{\Lambda_{(1-2\varepsilon)k}\}$ is
$L^\infty$-sampling.
We argue by contradiction. Suppose not, then, for any $n$ there are
sections $s_k\in H^0(L^k)$ such that $\|s_k\|_\infty=1$ and
$\|s|_{\Lambda_{(1-1/n)k}}\|_\infty=o(1)$ when $k$ is very big.
If we fix $n$ and by passing to a subsequence in normal coordinates around the
points $x_k$ where $|s_k|$ takes its maximum value, we  construct functions
$f_n\in \mathcal{BF}^\infty(\Cbb)$ of norm one such that $f_n(0)=1$ and
$f_n|_{\Sigma_n}\equiv 0$, where $\Sigma_n$ is a weak limit of a subsequence of 
$\Lambda_{(1-1/n)k}$ as $k\to \infty$ in normal coordinates scaled
appropriately. We take  another subsequence of the functions $f_n$ and of
the separated sequences $\Sigma_n$ in such a way that $\Sigma_n$ converge
weakly to $\Sigma$, $f_n\to f$ and $f\in \mathcal{BF}^\infty(\Cbb)$ of norm
one, $f(0)=1$, $f|_\Sigma\equiv 0$ and $\Sigma\in W(\Lambda)$. This is a
contradiction since $\Sigma$ has $D^-(\Sigma)>1$.

We have proved that $\{\Lambda_{(1-2\varepsilon)k}\}$ is
$L^\infty$-sampling. We finish the proof by observing that
by Proposition~\ref{saltsampling} this implies that 
$\{\Lambda_{(1-\varepsilon)k}\}$ is $L^2$-sampling.
\qed

\subsection{}
We provide now a characterization for the interpolation arrays.
\begin{thm}\label{interpolationthm}
 Let $\Lambda$ be a separated array and let $L$ be a
holomorphic line bundle with a smooth positive metric $\phi$ over a compact Riemann
surface $X$. Then $\Lambda$ is an interpolation array for the line bundle if
and only if there is an $\varepsilon>0$, $r>0$ and $k_0$ such that for all $k\ge
k_0$,
\begin{equation}\label{density}
\frac{\# (\Lambda_k \cap B(x,
r/\sqrt{k}))}{\int_{B(x, r/\sqrt{k})}ik\ddbar \phi
} < \frac 1\pi -\varepsilon  \qquad\forall x\in X,
\end{equation}
\end{thm}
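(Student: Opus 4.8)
This is the interpolation counterpart of Theorem~\ref{sufsampling}, and the plan is to run the two implications in parallel with the two halves of the proof of that theorem, systematically replacing ``sampling'' by ``interpolation'', the lower density $D^{-}$ by the upper density $D^{+}$, and the inequality $>\tfrac1\pi$ by $<\tfrac1\pi$. As in the sampling case, for a separated array the condition \eqref{density} with a single fixed large $r$ is equivalent, by the shell estimate \eqref{shell_volume} and the monotonicity of $\nu^{+}_\Lambda(R)$, to the strict inequality $D^{+}(\Lambda)<\tfrac1\pi$, so it suffices to prove that $\Lambda$ is an interpolation array if and only if $D^{+}(\Lambda)<\tfrac1\pi$.

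\emph{Sufficiency.} Assume $D^{+}(\Lambda)<\tfrac1\pi$. Following the structure of the proof of Theorem~\ref{sufsampling}, I would first establish an $L^\infty$-interpolation property for $\Lambda$ --- for all large $k$ and any bounded family of values on $\Lambda_k$ there is a section of $L^k$ realizing them with sup-norm controlled by that of the data --- and then upgrade it to $L^2$-interpolation after a perturbation, using that \eqref{density} is an open condition to return to $\Lambda$ itself. For the $L^\infty$ step one passes, via normalized coordinates (Definition~\ref{normalized}) around each point, to the rescaled nodes $\Sigma_k\subset\Cbb$: their weak limits are uniformly separated of upper Beurling density strictly below the critical one, hence interpolating for the Bargmann--Fock space (and its $\mathcal{BF}^\infty$ version) by the Seip--Wallsten characterization \cite{SeiWal92}, and a normal-families/contradiction argument parallel to the $\mathcal{BF}^\infty$-sampling step of Theorem~\ref{sufsampling} transfers this to the desired $L^\infty$-interpolation estimate for $L^k$. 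For the upgrade, the $L^\infty$-interpolation property provides, for each $\lambda\in\Lambda_{(1-\varepsilon)k}$, a section $\ell_\lambda\in H^0(L^{(1-\varepsilon)k})$ that is a unit at $\lambda$, vanishes at the other nodes, and has $\sup_X|\ell_\lambda|\lesssim 1$; then the interpolation construction of Lemma~\ref{LemFekSamInt} --- set $Q_\lambda=\ell_\lambda\otimes[\Phi_\lambda^{(\varepsilon/2)k}/|\Pi_{(\varepsilon/2)k}(\lambda,\lambda)|]^2\in H^0(L^k)$, bound $\sup_x\sum_\lambda|Q_\lambda(x)|\lesssim\eps^{-n}$ and $\int_X|Q_\lambda|\lesssim(\eps k)^{-n}$ using separation and the Plancherel--P\'olya inequality, and solve with $Q=\sum_\lambda\langle v_\lambda,Q_\lambda(\lambda)\rangle\,Q_\lambda$ --- shows that $\{\Lambda_{(1-\varepsilon)k}\}$ is an interpolation array. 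Alternatively the $L^2$ interpolant can be assembled directly by gluing the local Bargmann--Fock interpolants with cutoffs and correcting with a $\dbar$-solution having logarithmic poles on $\Lambda_k$ via the Demailly--Nadel estimate \cite{Berndtsson10}, with a weight whose curvature stays positive exactly because $D^{+}(\Lambda)<\tfrac1\pi$.

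\emph{Necessity.} Suppose $\Lambda$ is an interpolation array. By the interpolation part of Theorem~\ref{thmfock}, every weak limit $\Sigma$ of a partial subsequence of the rescaled nodes (around any base point) is an interpolation sequence for $\mathcal{BF}^2(\Cbb)$, hence uniformly separated with upper density strictly below the critical one by \cite{SeiWal92}; a compactness argument over base points --- using that $X$ is compact and that normalized coordinates, together with \eqref{normal}, can be chosen uniformly --- produces a uniform gap, $D^{+}(\Sigma)\le(1-\delta)\cdot(\text{critical})$ for all such $\Sigma$, with $\delta>0$ independent of the base point. Since the rescaled nodes of $\{\Lambda_{(1+\varepsilon)k}\}$ are dilations by $(1+\varepsilon)^{-1/2}$ of those of $\Lambda$, this gap lets one rerun the sufficiency argument for the perturbed array and conclude that $\{\Lambda_{(1+\varepsilon)k}\}$ is still an interpolation array for some small $\varepsilon>0$ (cf.\ Corollary~\ref{cor_pert_fek}). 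Applying Corollary~\ref{densities} to $\{\Lambda_{(1+\varepsilon)k}\}$ and using the density scaling $D^{+}(\{\Lambda_{(1+\varepsilon)k}\})=(1+\varepsilon)^{n}D^{+}(\Lambda)$ gives $(1+\varepsilon)^{n}D^{+}(\Lambda)\le\tfrac1\pi$, hence $D^{+}(\Lambda)<\tfrac1\pi$, which is \eqref{density}.

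\emph{Main obstacle.} The delicate point is the sufficiency direction, specifically turning the local Bargmann--Fock information into a global statement about sections of $L^k$ with constants independent of $k$ (and of the number of prescribed values): in the $L^\infty$-via-contradiction route this is the uniformity over base points in the rescaling, while in the $\dbar$ route it is the construction of a singular weight whose negative curvature contribution from the logarithmic poles is uniformly dominated by $k\,i\ddbar\phi$ --- which is exactly where the strict inequality $D^{+}(\Lambda)<\tfrac1\pi$ enters quantitatively --- and the extraction of a $k$-independent constant from the Demailly--Nadel estimate. The algebraic bookkeeping (density scaling under $k\mapsto(1\pm\varepsilon)k$, equivalence of \eqref{density} with $D^{+}(\Lambda)<\tfrac1\pi$, the open-condition argument closing the loop) is routine and parallels the sampling case.
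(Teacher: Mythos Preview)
Your overall architecture is right, but the sufficiency step via an ``$L^\infty$-interpolation by contradiction'' argument does not transfer from the sampling proof the way you suggest. For sampling, failure is witnessed by a \emph{single} section $s_k$ with $|s_k(x_k)|=1$ and $s_k|_{\Lambda_k}\to 0$, and normal families around $x_k$ produce a nonzero limit in $\mathcal{BF}^\infty$ vanishing on a weak limit $\Sigma$. For interpolation there is no single object to rescale: the statement ``every interpolant of the data $\{v_\lambda\}$ has large sup-norm'' does not hand you a section to take a limit of, and the data live on the moving set $\Lambda_k$. The paper proceeds \emph{constructively} instead: around each $\lambda$ one uses the Seip--Wallst\'en Fock interpolation to build a local function $f_\lambda^k$ that is $1$ at $0$ and $0$ at the other rescaled nodes, globalizes it by a cutoff plus a H\"ormander $\dbar$-correction to $g_\lambda\in H^0(L^{(1-\varepsilon)k})$, and then tensors with the normalized Bergman-kernel factor to obtain $s_\lambda\in H^0(L^k)$ with the required off-diagonal decay. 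The engine is a Neumann-series criterion (Proposition~\ref{weakint}): if $|s_\lambda(\lambda)|=1$, the row and column sums $\sum_{\lambda'\ne\lambda}|s_\lambda(\lambda')|$ are $<\tfrac12$, and $\|\sum c_\lambda s_\lambda\|_2^2\lesssim k^{-1}\sum|c_\lambda|^2$, then the composition of restriction with the approximate extension is an $\ell^2$-perturbation of the identity, hence invertible, and $\Lambda$ is interpolating. Your alternative $\dbar$ route with logarithmic poles is in the right spirit, but the operator criterion together with the two-step construction (first $g_\lambda$ at level $(1-\varepsilon)k$, then the kernel factor to force decay) is what actually closes the argument here.

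For necessity your ``uniform gap by compactness, then rerun sufficiency'' is plausible but hides real work (upper semicontinuity of $D^+$ over weak limits taken around \emph{all} base points is not automatic), and it is not the route the paper takes. The paper passes through $L^1$-interpolation and its dual characterization (Proposition~\ref{duality}): the $L^1$-interpolation constant at level $k$ is comparable to the best $A_k$ with $\sup_x k^{-1}\big|\sum_\lambda \langle a_\lambda,\Pi_k(x,\lambda)\rangle\big|\le A_k\sup|a_\lambda|$. One then shows directly, by contradiction, that $\{\Lambda_{(1+2\varepsilon)k}\}$ is $L^1$-interpolating: if the constants blow up, the dual description produces coefficients $\{a_\lambda^k\}$ with $\sup|a_\lambda^k|=1$ whose kernel-sum is $o(1)$; rescaling around the point where $|a_\lambda^k|$ is maximal and invoking the \emph{universality of the Bergman kernel} \cite{BleShiZel00} (convergence of $k^{-1}\Pi_k$ in normal coordinates to the Fock kernel) yields in the limit a bounded sequence $\{a_\sigma\}$ on some $\Sigma\in W(\Lambda)$ with $|a_0|=1$ and $\sum a_\sigma e^{\bar\sigma z-|\sigma|^2/2}\equiv 0$, contradicting that $\Sigma$ is Fock-interpolating. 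Proposition~\ref{saltinterp} then upgrades $L^1$- to $L^2$-interpolation for $\{\Lambda_{(1+\varepsilon)k}\}$, and the strict density inequality follows from Corollary~\ref{densities} exactly as you wrote.
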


Remark that the density condition \eqref{density} is invariant under change 
of metric, which can be shown in a simiar way as we did above for the condition
\eqref{updensity}.
We will first check that condition \eqref{density} implies that
$\Lambda$ is interpolating. We start by the following reduction.

 \begin{proposition}\label{weakint} Let $\Lambda$ be separated.
  If there is a $C>0$  such that for every 
  $k\ge k_0$ and every $\lambda\in\Lambda_k$ there is a section
  $s_\lambda\in H^0(L^k)$ with 
  \begin{enumerate}
\renewcommand\theenumi{(\roman{enumi})}
\renewcommand{\labelenumi}{\theenumi}

   \item\label{uno} $|s_\lambda(\lambda)|=1$;
   \item\label{dos} $\sup_\lambda \sum_{\lambda'\ne\lambda}
|s_\lambda(\lambda')|< 1/2$;
   \item\label{dosbis} $\sup_{\lambda'} \sum_{\lambda\ne\lambda'}
          |s_\lambda(\lambda')|< 1/2$;
   \item\label{tres} $\|\sum c_\lambda s_\lambda\|_2^2 \le Ck^{-1} \sum
           |c_\lambda|^2;$ 
  \end{enumerate}
  then $\Lambda$ is an interpolation array.
\end{proposition}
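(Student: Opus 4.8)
The plan is to run a perturbation‑of‑the‑identity (Neumann series) argument: hypotheses \ref{uno}--\ref{dosbis} say precisely that, evaluated on the nodes, the system $\{s_\lambda\}$ is an ``almost diagonal'' Lagrange system, and \ref{tres} supplies the $L^2$ control of the synthesis map. First I would normalize the data. Fix $k\ge k_0$. By \ref{uno}, for each $\lambda\in\Lambda_k$ the value $s_\lambda(\lambda)$ is a unit vector in the fiber of $L^k$ at $\lambda$, so any prescribed value $v_\lambda$ in that fiber has a unique representation $v_\lambda=\alpha_\lambda\, s_\lambda(\lambda)$ with $|\alpha_\lambda|=|v_\lambda|$. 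Likewise write $s_\lambda(\lambda')=\beta_{\lambda'\lambda}\, s_{\lambda'}(\lambda')$, so that $|\beta_{\lambda'\lambda}|=|s_\lambda(\lambda')|$ and, taking $\lambda=\lambda'$, $\beta_{\lambda\lambda}=1$. Then for any scalars $c=(c_\lambda)_{\lambda\in\Lambda_k}$ the section $s=\sum_\lambda c_\lambda s_\lambda\in\HoLk$ satisfies $s(\lambda')=\bigl(\sum_\lambda \beta_{\lambda'\lambda}c_\lambda\bigr)s_{\lambda'}(\lambda')$; hence solving the interpolation problem $s(\lambda)=v_\lambda$ is the same as solving the finite linear system $Bc=\alpha$, where $B=(\beta_{\lambda'\lambda})=I+E$ and $E$ is the off‑diagonal part of $B$.

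Next I would estimate the operator norm of $E$ on $\Cbb^{\Lambda_k}$ (Euclidean norm) by Schur's test. By \ref{dosbis} the row sums satisfy $\sum_{\lambda}|E_{\lambda'\lambda}|=\sum_{\lambda\ne\lambda'}|s_\lambda(\lambda')|<\tfrac12$ uniformly in $\lambda'$, and by \ref{dos} the column sums satisfy $\sum_{\lambda'}|E_{\lambda'\lambda}|=\sum_{\lambda'\ne\lambda}|s_\lambda(\lambda')|<\tfrac12$ uniformly in $\lambda$, so Schur's test gives $\|E\|\le\sqrt{\tfrac12\cdot\tfrac12}=\tfrac12$. Since everything lives in the finite‑dimensional space $\Cbb^{\Lambda_k}$ there are no convergence issues, and $B=I+E$ is invertible with $B^{-1}=\sum_{m\ge0}(-E)^m$ and $\|B^{-1}\|\le (1-\tfrac12)^{-1}=2$. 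Setting $c:=B^{-1}\alpha$ we obtain $\sum_\lambda|c_\lambda|^2=\|c\|^2\le 4\,\|\alpha\|^2=4\sum_\lambda|v_\lambda|^2$.

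Finally, the section $s:=\sum_\lambda c_\lambda s_\lambda\in\HoLk$ solves $s(\lambda)=v_\lambda$ for all $\lambda\in\Lambda_k$ by construction, and \ref{tres} together with the bound on $\sum_\lambda|c_\lambda|^2$ yields
\[
\int_X|s(x)|^2=\Bigl\|\sum_\lambda c_\lambda s_\lambda\Bigr\|_2^2\le C\,k^{-1}\sum_\lambda|c_\lambda|^2\le 4C\,k^{-1}\sum_\lambda|v_\lambda|^2,
\]
which is exactly the defining estimate \eqref{DefInterpEst} of an interpolation set at level $k$ (here $n=1$), with interpolation constant $4C$ independent of $k$. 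As this holds for every $k\ge k_0$, $\Lambda$ is an interpolation array. I expect no serious obstacle: the only point requiring care is the bookkeeping between fiber elements and scalars when passing from the sections to the matrix $B$; once that is set up correctly, the argument is the standard Neumann‑series scheme. (I would also note in passing that the separation of $\Lambda$ is not used in this proposition beyond whatever was needed to produce the sections $s_\lambda$ satisfying \ref{uno}--\ref{tres}; the sums above are finite simply because each $\Lambda_k$ is a finite set.)
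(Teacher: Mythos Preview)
Your proof is correct and follows essentially the same route as the paper: both set up a synthesis map $c\mapsto\sum_\lambda c_\lambda s_\lambda$, observe that its composition with restriction to $\Lambda_k$ differs from the identity by an off-diagonal matrix whose $\ell^2$ operator norm is at most $1/2$ by Schur's test (using \ref{dos} and \ref{dosbis}), invert via a Neumann series, and then invoke \ref{tres} for the $L^2$ estimate. The only difference is packaging---the paper phrases this with operators $R$ and $E$ between $H^0(L^k)$ and $\ell^2(\Lambda_k)$, while you write out the matrix $B=I+E$ explicitly using the unit frames $s_\lambda(\lambda)$---but the mathematical content is identical.
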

\begin{proof}
Let $\ell^2(\Lambda_k)$ be endowed with the norm
$\|v\|^2:=k^{-1}\sum_{\Lambda_k} |v_\lambda|^2$.
We consider the following two operators.
The first is the restriction operator $R: H^0(L^k)\to \ell^2(\Lambda_k)$ defined as
$R(s)=\{s(\lambda)\}$. It is bounded from $H^0(L^k)$ endowed with the $L^2$ norm
by the Plancherel-P\'olya inequality (Lemma \ref{LemmaPlancherelPolya})
since $\Lambda$ is separated and
its norm $\|R\|$ depends only on the separation constant of $\Lambda$.

The second operator is $E:\ell^2(\Lambda_k) \to H^0(L^k)$ defined as
$E(\{v_\lambda\})=\sum \dotprod{v_\lambda}{s_\lambda(\lambda)} s_\lambda(x)$.
It is bounded clearly by properties \ref{uno} and \ref{tres}. If we prove that $RE: \ell^2\to \ell^2$ is invertible with
the norm of the inverse $\|(RE)^{-1}\|\le C$ bounded independently of $k$, then
clearly $\Lambda$ is interpolating, because any values $\{v_\lambda\}$ are
attained by the section $s= E (RE)^{-1} (\{v_\lambda\})$ with size control.

But the conditions  \ref{uno},\ref{dos},\ref{dosbis} imply that the operator
$RE-\operatorname{Id}: \ell^2(\Lambda_k) \to \ell^2(\Lambda_k)$
by Schur's Lemma has norm bounded by $1/2$. Thus $RE$ is invertible. 
\end{proof}

To finish the proof of the sufficiency of \eqref{density} we are going to
construct the sections as in the
Proposition~\ref{weakint}. Around any given $\lambda\in \Lambda_k$ we can
consider normal
coordinates. Since by hypothesis the density is small the corresponding sequence
$\Sigma_k$ is an interpolation sequence for the $\mathcal{BF}^2$ space in
$\Cbb$.
Actually since the separation constant is uniform and the density is
uniform then by a theorem of Seip and Walsten, see 
\cite[Theorem~1.2]{SeiWal92}, the constants of interpolation
for all the sequences $\Sigma_k$ around any point $\lambda\in \Lambda_k$ will
be uniformly bounded, for $k\ge k_0$. Thus we can construct  functions
$f_\lambda^k$ such that $|f_\lambda^k(0)|=1$,
$\|f_\lambda^k\|\le C$ and $f_\lambda^k(\sigma)=0$ for all $\sigma\in
\Sigma_k\setminus 0$. Now we can construct a global section $g_\lambda\in
H^0(L^k)$ such that near $\lambda$, $g_\lambda(z)$ is very close to
$f_\lambda(z)^k e_L^k(z)$, where $e_L^k(z)$ is the local frame around $\lambda$
used for the normal coordinates. 

In order to do this we define $g_\lambda = \chi_{\lambda,k}(z) f_\lambda^k(z)
e^k(z) + u$, where $\chi_{\lambda,k}$ is a cutoff function around $\lambda$
such that $g_\lambda(z)=0$ if $d(z,\lambda)>2C/\sqrt{k}$ and $g_\lambda(z)=1$
if $d(z,\lambda)<C/\sqrt{k}$   and $u$ is the solution to the equation $\dbar
u= \dbar \chi_{\lambda,k}f_\lambda^k(z) e^k(z)$ provided by the H\"ormander
theorem. This theorem ensures that $\|u\|^2\le \varepsilon$, 
provided that the cutoff constant $C$ is big enough.

This is not enough if we want the decay needed in the Proposition~\ref{weakint},
in
particular in the items \ref{dos},\ref{dosbis} and \ref{tres}. We are again going to use the extra freedom
that we have because the hypothesis is an open condition. We could have taken
$f_\lambda^k$ such that $\int |f^k_\lambda|^2 e^{-(1-\varepsilon)|z|^2}<+\infty$
and in this
case we could have constructed $g_\lambda\in H^0(L^{(1-\varepsilon)k})$ such
that 
\[
 |g_\lambda(\lambda)|=1,\qquad \|g_\lambda\|^2\le C/k,\qquad
k^{-1} \sum_{\lambda'\ne \lambda} |g_\lambda(\lambda')|^2 \le
\varepsilon.
\]
and we can take in the construction $\varepsilon>0$ as small as we want without
affecting the $K$.

We define $s_\lambda(x)=g_\lambda(x) \otimes
\left[\frac{\Phi_{\lambda}^{\varepsilon
k/2}(x)}{|\Pi_{(\varepsilon/2)k}(\lambda,\lambda)|}\right]^2$ and
using \eqref{EstimateOffDiagonal} it is easy to check that 
\[\sup_X |\sum_{\Lambda_k} c_\lambda s_\lambda(x)| \lesssim \sup_{\Lambda_k}
|c_\lambda| \quad \text{ and } \quad
\int_X |\sum_{\Lambda_k} c_\lambda s_\lambda(x)| \lesssim
k^{-1} \sum_{\Lambda_k} |c_\lambda|.
\] 
Thus by interpolation we get $\|\sum c_\lambda s_\lambda\|_2^2 \le C k^{-1}
\sum         |c_\lambda|^2$ which gives \ref{tres}. Finally \ref{dos},\ref{dosbis}
can be checked in a similar way.
\qed

\subsection{}
We turn now to the neccesity of the density condition \eqref{density}. We need
to check that the density condition that we
proved that was necessary in Corollary~\ref{densities} is actually a strict
density condition. 
As a technical tool to prove the necessity of the strict inequality we need to
work with $L^1$ interpolating arrays. The definition is the following:

\begin{definition}
We say that a separated array $\Lambda=\{\Lambda_k\}$ is an
$L^1$-\emph{interpolation array} if there is $k_0$ and
a constant $0<C<\infty$ such that, for each $k \geq k_0$ and any
set of vectors $\{v_\lambda\}_{\lambda\in \Lambda_k}$ (each $v_\lambda$ is an
element of the fiber of $\lambda$ in $\Lcal^k$) there is a section
$s\in \HoLk$ such that
\begin{equation*}
s(\lambda)=v_\lambda, \quad \lambda\in \Lambda_k,
\end{equation*}
and
\begin{equation}\label{eq_l1_int}
\int_X \big | s(x) \big | \leq C k^{-1} \sum_{\lambda
\in\Lambda_k} |v_\lambda|.
\end{equation}
\end{definition}

On each level $k\ge k_0$, the best constant $C_k$ such that
\eqref{eq_l1_int} holds for all $s\in \HoLk$ that interpolate the 
prescribed values, is called the constant
of interpolation at level $k$. Of course $\Lambda$ is an interpolation array
if all the constants $\{C_k\}$ are uniformly bounded.
There is an alternative way of computing $C_k$ by duality.

\begin{proposition}\label{duality}
The constant of $L^1$ interpolation at level $k$ is comparable to the smallest
constant $A_k$ such that  
\[
 \sup_{x\in X} k^{-1} \Big|  \sum_{\Lambda_k} 
 \dotprod{a_\lambda}{\Pi_k(x, \lambda)}
 \Big| \le A_k \sup_{\Lambda_k} |a_\lambda|,
\]
where
$\{a_\lambda\}_{\lambda\in \Lambda_k}$ are arbitrary elements on the fiber of
$\lambda$ in $\Lcal^k$.
\end{proposition}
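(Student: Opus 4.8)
The plan is to prove this by Banach--space duality: the quantity on the right is exactly the norm dual to the $L^1$--interpolation problem. First I would fix a level $k$, write $H=\HoLk$, and set up the restriction operator $R\colon H\to\ell^1(\Lambda_k)$, $R(s)=\{s(\lambda)\}_{\lambda\in\Lambda_k}$, where $H$ is given the $L^1(X)$ norm and the space $\ell^1(\Lambda_k)$ of fiber-valued sequences is given the weighted norm $\|v\|:=k^{-1}\sum_\lambda|v_\lambda|$, so that the defining estimate of an $L^1$--interpolation array reads $\|s\|_{L^1}\le C\|v\|$. The map $R$ is bounded, with norm controlled by the separation constant of $\Lambda$, by the Plancherel--P\'olya inequality (Lemma~\ref{LemmaPlancherelPolya} with $p=1$). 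By definition $C_k$ is the least $M$ with $R\big(\{s:\|s\|_{L^1}\le M\}\big)\supseteq\{v:\|v\|\le1\}$, that is, the optimal open--mapping constant of $R$; and the classical duality between surjective and bounded--below operators (purely formal here, all spaces being finite dimensional) gives
\[
C_k\;\simeq\;\sup_{\xi\neq 0}\ \frac{\|\xi\|_{(\ell^1(\Lambda_k))^*}}{\|R^*\xi\|_{H^*}}\,.
\]

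Next I would compute the two dual norms and the adjoint. An element $\xi=\{a_\lambda\}$ of $(\ell^1(\Lambda_k))^*$ acts by $v\mapsto\sum_\lambda\dotprod{v_\lambda}{a_\lambda}$ and has norm $k\sup_\lambda|a_\lambda|$, while $R^*\xi$ is the functional $s\mapsto\sum_\lambda\dotprod{s(\lambda)}{a_\lambda}$ on $H$. Since $s(\lambda)=\int_X\dotprod{s(y)}{\Pi_k(\lambda,y)}\,dV(y)$ for $s\in H$, a short computation using the symmetry of the Bergman kernel shows that this functional is represented, through the $L^2$ inner product, by the holomorphic section $G_\xi(x):=\sum_{\lambda\in\Lambda_k}\dotprod{a_\lambda}{\Pi_k(x,\lambda)}\in H$; that is, $\dotprod{s}{G_\xi}_{L^2}=\sum_\lambda\dotprod{s(\lambda)}{a_\lambda}$ for all $s\in H$. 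In particular $|G_\xi(x)|$ is exactly the expression appearing inside the absolute value in the Proposition, so it remains only to identify the dual norm on $(H,\|\cdot\|_{L^1})$.

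I claim that for every $G\in H$,
\[
\|G\|_{H^*}:=\sup\{\,|\dotprod{s}{G}_{L^2}|\ :\ s\in H,\ \|s\|_{L^1}\le 1\,\}\ \simeq\ \sup_{x\in X}|G(x)|\,.
\]
The inequality $\|G\|_{H^*}\le\sup_X|G|$ is H\"older. For the reverse I would choose $x_0$ nearly attaining $\sup_X|G|$ and test against the Bergman section $\Phi_{x_0}\in H$ of Lemma~\ref{LemBergmanSec} applied to $L^k$: one has $|\dotprod{\Phi_{x_0}}{G}_{L^2}|=|G(x_0)|$ (an identity implicit in the construction of $\Phi_{x_0}$), while $\|\Phi_{x_0}\|_{L^1}=\int_X|\Pi_k(\cdot,x_0)|\lesssim 1$ by part~\ref{bk:i} of Lemma~\ref{decay} together with the symmetry~\eqref{EqBergmanSymm}; testing against $\Phi_{x_0}/\|\Phi_{x_0}\|_{L^1}$ then yields $\|G\|_{H^*}\gtrsim|G(x_0)|$. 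Consequently $\|R^*\xi\|_{H^*}\simeq\sup_{x\in X}\big|\sum_{\lambda\in\Lambda_k}\dotprod{a_\lambda}{\Pi_k(x,\lambda)}\big|$.

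Substituting the two dual norms into the displayed formula for $C_k$ gives
\[
C_k\;\simeq\;\sup_{\{a_\lambda\}\neq 0}\ \frac{\sup_\lambda|a_\lambda|}{\ \displaystyle\sup_{x\in X}k^{-1}\big|\sum_{\lambda\in\Lambda_k}\dotprod{a_\lambda}{\Pi_k(x,\lambda)}\big|\ }\,,
\]
i.e.\ $C_k$ is comparable to the optimal constant $A_k$ relating $\sup_\lambda|a_\lambda|$ and $\sup_{x\in X}k^{-1}\big|\sum_\lambda\dotprod{a_\lambda}{\Pi_k(x,\lambda)}\big|$, which is the assertion of Proposition~\ref{duality}. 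I expect the dual--norm identification $\|G\|_{H^*}\simeq\sup_X|G|$ to be the main obstacle: it is the only step that is not pure functional analysis, and is where the special structure of $\HoLk$ enters, through the reproducing kernel and its $L^1$ off--diagonal integrability (part~\ref{bk:i} of Lemma~\ref{decay}). Beyond that, the one thing to be careful about is pinning down the contraction/conjugation conventions so that $G_\xi$ genuinely represents $R^*\xi$ with pointwise norm $\big|\sum_\lambda\dotprod{a_\lambda}{\Pi_k(x,\lambda)}\big|$.
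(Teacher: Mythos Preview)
Your argument is correct and is precisely the duality the paper has in mind; the paper's own proof is a two-line sketch that invokes the off-diagonal decay \eqref{EstimateOffDiagonal} to say that the Bergman projection is bounded on every $L^p$, hence $(H^0(L^k),\|\cdot\|_{L^1})^*\simeq(H^0(L^k),\|\cdot\|_{L^\infty})$, and then declares the rest ``standard''. You have simply unpacked this: your identification $\|G\|_{H^*}\simeq\sup_X|G|$ via testing against $\Phi_{x_0}$ and Lemma~\ref{decay}\ref{bk:i} is exactly the content of ``the Bergman projection is bounded on $L^1$ and $L^\infty$'', and the remainder is pure finite-dimensional open-mapping/closed-range duality. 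One small remark: your final displayed formula gives $C_k\simeq\sup_{\{a_\lambda\}}\bigl(\sup_\lambda|a_\lambda|\bigr)\big/\bigl(\sup_x k^{-1}|\sum\dotprod{a_\lambda}{\Pi_k(x,\lambda)}|\bigr)$, i.e.\ the optimal constant in $\sup_\lambda|a_\lambda|\le A_k\sup_x k^{-1}|\ldots|$, which is the inequality \emph{reversed} from the one printed in the Proposition; your version is the correct one (and is the one actually used later in the paper, where blowing up of $C_k$ is converted into sequences $\{a_\lambda\}$ with $\sup|a_\lambda|=1$ and $\sup_x k^{-1}|\ldots|=o(1)$). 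So the discrepancy is a typo in the statement, not a gap in your proof.
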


\begin{proof}
 This is standard and follows from the fact that the Bergman kernel
decays very fast away from the diagonal \eqref{EstimateOffDiagonal}.
Thus the Bergman projection from
sections of $L^k$ endowed with the $L^p$ norm to holomorphic sections endowed
with the  $L^p$ norm is bounded for all $p\in[1,\infty]$, and the dual space of 
 $H^0(L^k)$ with the $L^1$ norm is the space $H^0(L^k)$ endowed with the
supremum norm.
\end{proof}
It will be convenient to compare interpolating arrays in $L^1$ and in $L^2$
and we will use the following proposition
\begin{proposition}\label{saltinterp}
If $\Lambda=\{\Lambda_k\}$ is an $L^1$ interpolation array then
$\{\Lambda_{(1-\varepsilon) k}\}$ is an $L^2$ interpolation array.
\end{proposition}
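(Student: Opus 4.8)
The plan is to transcribe the interpolation half of the proof of Lemma~\ref{LemFekSamInt}, with the Lagrange sections used there (whose uniform boundedness came from the extremal property of Fekete points) replaced by ``Lagrange--type'' sections furnished by the $L^1$ interpolation hypothesis, and with the pointwise control that is thereby lost recovered from the sub-mean value inequality \eqref{InEqSubMean}. Fix a (large) level $k$ and set $m:=(1-\eps)k$. By hypothesis $\Lambda_m$ is an $L^1$ interpolation set at level $m$ with interpolation constant independent of $k$; prescribing a unit vector in the fibre of $\lambda$ in $L^m$ at the point $\lambda\in\Lambda_m$ and $0$ at every other point of $\Lambda_m$ produces a section $g_\lambda\in H^0(L^m)$ with $|g_\lambda(\lambda)|=1$, $g_\lambda|_{\Lambda_m\setminus\{\lambda\}}=0$ and $\int_X|g_\lambda|\lesssim k^{-n}$ (here $n=1$, so this reads $k^{-1}$; the implicit constant depends on the fixed $\eps$). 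Using the section $\Phi_\lambda^{(\eps/2)k}$ provided by Lemma~\ref{LemBergmanSec} I then define, exactly as in Lemma~\ref{LemFekSamInt},
\[
Q_\lambda(x):=g_\lambda(x)\otimes\left[\frac{\Phi_\lambda^{(\eps/2)k}(x)}{|\Pi_{(\eps/2)k}(\lambda,\lambda)|}\right]^{2}\in\HoLk,
\]
which lies in $\HoLk$ because $m+2\cdot(\eps/2)k=k$, and which satisfies $|Q_\lambda(\mu)|=\delta_{\lambda\mu}$ for all $\mu\in\Lambda_m$.

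The only point at which the argument departs from Lemma~\ref{LemFekSamInt} is that here $\sup_X|g_\lambda|$ need not equal $1$ a priori. However, feeding the $L^1$ bound $\int_X|g_\lambda|\lesssim k^{-n}$ into \eqref{InEqSubMean} (with $p=1$, applied to sections of $L^m$) gives $\sup_X|g_\lambda|\lesssim k^{n}\int_X|g_\lambda|\lesssim 1$, uniformly in $\lambda$ and $k$ (with constant depending on $\eps$). Granting this, the two estimates
\[
\int_X|Q_\lambda|\lesssim k^{-n},\qquad \sup_{x\in X}\ \sum_{\lambda\in\Lambda_m}|Q_\lambda(x)|\lesssim 1
\]
follow just as \eqref{EstimateQjInt} and \eqref{EstimateQjSum} did: the first from \eqref{EqBergmanSymm}, \eqref{EqBergmanDiag} and \eqref{EstimateOnDiagonal} together with $\sup_X|g_\lambda|\lesssim1$; the second by applying the Plancherel--P\'olya inequality (Lemma~\ref{LemmaPlancherelPolya}) to the section $\Phi_x^{(\eps/2)k}$, using that $\Lambda_m$ is $\sqrt{\eps}$--separated (up to a fixed constant) at level $(\eps/2)k$, its separation being inherited from $\Lambda$. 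As the remark following Lemma~\ref{LemFekSamInt} observes, no off-diagonal decay of the Bergman kernel is needed for this.

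Finally, given data $\{v_\lambda\}_{\lambda\in\Lambda_{(1-\eps)k}}$ with each $v_\lambda$ in the fibre of $\lambda$ in $L^k$, set $s:=\sum_\lambda c_\lambda Q_\lambda$ with $c_\lambda:=\dotprod{v_\lambda}{Q_\lambda(\lambda)}$; since $|Q_\lambda(\lambda)|=1$ this gives $|c_\lambda|=|v_\lambda|$ and $s(\lambda)=v_\lambda$ for every $\lambda$. By Cauchy--Schwarz and the two displayed estimates,
\[
|s(x)|^{2}\le\Bigl(\sum_\lambda|c_\lambda|^{2}|Q_\lambda(x)|\Bigr)\Bigl(\sum_\lambda|Q_\lambda(x)|\Bigr)\lesssim\sum_\lambda|c_\lambda|^{2}|Q_\lambda(x)|,
\]
and integrating over $X$ yields $\int_X|s|^{2}\lesssim k^{-n}\sum_\lambda|v_\lambda|^{2}$, i.e.\ $\Lambda_{(1-\eps)k}$ is an $L^2$ interpolation set at level $k$ with constant independent of $k$, which is the claim. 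The main (and essentially the only) obstacle is converting the $L^1$ bound on $g_\lambda$ into a uniform pointwise bound; once that is done the whole construction is a transcription of the Fekete interpolation argument.
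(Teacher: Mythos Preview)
Your proof is correct and follows essentially the same route as the paper's: build Lagrange--type sections from the $L^1$ interpolation hypothesis, upgrade the $L^1$ bound to a uniform sup bound via the sub-mean value inequality \eqref{InEqSubMean}, and then run the interpolation argument of Lemma~\ref{LemFekSamInt} verbatim. The paper's own proof is a terse two-sentence sketch pointing to exactly these ingredients; your write-up simply fills in the details (and your indexing $m=(1-\eps)k$ is in fact cleaner than the paper's, which uses $(1-2\eps)k$ at one point).
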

\begin{proof}
 If $\Lambda$ is an $L^1$ interpolation array then for each $\lambda\in
\Lambda_{(1-2\varepsilon)k}$
we can build a ``Lagrange type'' section $s_\lambda\in
H^0(L^{(1-2\varepsilon)k})$ such that 
$|s_\lambda(\lambda)|=1$, $|s_\lambda(\lambda')|=0$ for all
$\lambda'\in \Lambda_{(1-2\varepsilon)k}\setminus\{\lambda\}$,
and $\|s_\lambda\|_{L^1} \le C
/k$. Then by the sub-mean value property \eqref{InEqSubMean} 
we obtain $\sup_X |s_\lambda(x)|\le C k \|s_\lambda\|_{L^1}\le
C$. Thus we can use the same argument as in Theorem \ref{LemFekSamInt} and we
prove
that $\{\Lambda_{(1-\varepsilon) k}\}$ is an $L^2$-interpolation array.
\end{proof}

The proof of strict inequality \eqref{density} follows once
we establish the following

\begin{proposition}
Assume that $\dim(X)=1$. Let $\Lambda$ be an $L^2$-interpolating array. There
is $\varepsilon>0$ such that $\{\Lambda_{(1+\varepsilon)k}\}$ is
$L^2$-interpolating. 
\end{proposition}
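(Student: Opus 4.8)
The plan is to run, on the interpolation side, the same argument that established the necessity of the density condition in Theorem~\ref{sufsampling}: replace sampling by interpolation, the norm $L^\infty$ by $L^1$, and the Seip--Wallsten description of sampling sequences by their description of interpolation sequences; and at the very end trade $L^1$ for $L^2$ through Proposition~\ref{saltinterp}. The reason for routing through $L^1$ rather than working directly with $L^2$ Riesz sequences is the same as the reason the sampling argument went through $L^\infty$: a failure of the $L^1$ estimate produces (via Proposition~\ref{duality}) a normalized datum whose $\ell^\infty$-norm is attained at a single point, which localizes the picture automatically, whereas a failure of the $L^2$ estimate would only give an $\ell^2$-normalized combination that one would first have to localize by hand. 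So the first step is this. Since $\Lambda$ is $L^2$-interpolating, Theorem~\ref{thmfock} shows that every weak limit $\Sigma\in W(\Lambda)$ of the rescaled arrays $\Sigma_k$ is an interpolation sequence for $\mathcal{BF}^2(\Cbb)$; hence, by \cite{SeiWal92}, it is separated and its upper Beurling density is \emph{strictly} below the critical value. I then upgrade this to a uniform statement, $\sup_{\Sigma\in W(\Lambda)} D^+(\Sigma) < 1$ (in the Bargmann--Fock normalization), using the standard compactness properties of weak limits of uniformly separated sequences --- $W(\Lambda)$ is compact and closed under further weak limits and $D^+$ is upper semicontinuous on it --- exactly as in Beurling's treatment; equivalently one may read this off from the array density $D^+(\Lambda)$ of \eqref{equation11}, which is already a supremum over all balls and levels, together with Corollary~\ref{densities}. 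Because of the strict gap, one can fix $\varepsilon>0$ small enough that, after the dilation that relates the rescaled arrays of $\{\Lambda_{(1+\varepsilon)k}\}$ to those of $\Lambda$ (a rescaling by $\sqrt k$ instead of the natural $\sqrt{(1+\varepsilon)k}$, which multiplies limit densities by $(1+\varepsilon)^{n}$), every weak limit $\Sigma'$ of the rescaled arrays of $\{\Lambda_{(1+\varepsilon)k}\}$ still has $D^+(\Sigma')<1$; hence, again by \cite{SeiWal92}, each such $\Sigma'$ is interpolating for $\mathcal{BF}^2(\Cbb)$ with interpolation constant bounded only in terms of the separation constant and the size of the density gap, i.e.\ uniformly over all such $\Sigma'$.

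The second step is to deduce that $\{\Lambda_{(1+\varepsilon)k}\}$ is an $L^1$-interpolation array; granting this, Proposition~\ref{saltinterp} applied to the array $\{\Lambda_{(1+\varepsilon)k}\}$ gives that $\{\Lambda_{(1+\varepsilon')k}\}$ is $L^2$-interpolating for a slightly smaller $\varepsilon'>0$, which is the assertion. I prove the $L^1$ claim by contradiction. By Proposition~\ref{duality}, failure of $L^1$-interpolation at level $k$ along a subsequence is equivalent to the existence, along that subsequence, of vectors $\{a^{(k)}_\lambda\}_{\lambda\in\Lambda_{(1+\varepsilon)k}}$ with $\sup_\lambda|a^{(k)}_\lambda|=1$ for which the holomorphic sections
\[
F_k(x):=k^{-1}\sum_{\lambda\in\Lambda_{(1+\varepsilon)k}}\dotprod{a^{(k)}_\lambda}{\Pi_k(x,\lambda)}\in\HoLk
\]
satisfy $\sup_X|F_k|\to 0$. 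Pick $\lambda^\ast_k$ with $|a^{(k)}_{\lambda^\ast_k}|=1$; passing to a subsequence we may assume $\lambda^\ast_k\to x_0$ for some $x_0\in X$ (which has positive curvature, since $(L,\phi)$ is positive), put normalized coordinates around $x_0$ as in Definition~\ref{normalized}, and rescale the picture about $\lambda^\ast_k$ by $\sqrt k$. As $\{\Lambda_{(1+\varepsilon)k}\}$ is separated with a uniform constant, the rescaled point sets converge along a further subsequence to a separated sequence $\Sigma'$ containing $0$ (the image of $\lambda^\ast_k$), which by the first step is interpolating for $\mathcal{BF}^2(\Cbb)$ with a uniformly controlled constant. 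By the universal rescaled asymptotics of the Bergman kernel in normalized coordinates (\cite{BleShiZel00}, the same mechanism used in Lemma~\ref{lemma_approx}), the rescaled sections $F_k$ converge locally uniformly to $G:=\sum_{\sigma\in\Sigma'}\dotprod{b_\sigma}{K(\cdot,\sigma)}$, where $K$ is the Bargmann--Fock reproducing kernel and $b_\sigma$ is the limit of the coefficients $a^{(k)}_\lambda$ at the points $\lambda$ approaching $\sigma$, so that $\|b\|_{\ell^\infty}=1$ because the coefficient at $0$ has modulus $1$. On the one hand $\sup_X|F_k|\to 0$ forces, after rescaling (the Gaussian weight being the limit of $e^{-k\phi}$), $\|G\|_{\mathcal{BF}^\infty}=0$, i.e.\ $G\equiv 0$. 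On the other hand an interpolation sequence for $\mathcal{BF}^2(\Cbb)$ is interpolating for every $\mathcal{BF}^p$ (same density description in \cite{SeiWal92}), so its reproducing kernels form a Riesz-type system and the synthesis estimate $\big\|\sum_{\sigma}\dotprod{b_\sigma}{K(\cdot,\sigma)}\big\|_{\mathcal{BF}^\infty}\gtrsim\|b\|_{\ell^\infty}$ holds with constant depending only on the interpolation constant of $\Sigma'$. This is incompatible with $G\equiv 0$ and $\|b\|_{\ell^\infty}=1$, and the contradiction proves that $\{\Lambda_{(1+\varepsilon)k}\}$ is $L^1$-interpolating.

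The main obstacle is the first step: passing from ``every individual weak limit has strictly subcritical density'' to the \emph{uniform} gap, and checking that this uniformity survives the dilation and produces interpolation constants for the limiting Bargmann--Fock sequences that do not degenerate. This is precisely what makes the contradiction in the second step robust --- without it, the limiting sequence $\Sigma'$ could sit exactly at the critical density, fail to be interpolating, and nothing would be gained over $\Lambda$ itself. A secondary, routine point is the bookkeeping of the perturbation parameters (the $(1\pm\varepsilon)k$ tacitly replaced by integers) and the shuttling between the $L^1$, $L^2$ and $L^\infty$ norms via Propositions \ref{duality} and \ref{saltinterp}, in exact analogy with the role of $L^\infty$-sampling and Proposition~\ref{saltsampling} in the proof of Theorem~\ref{sufsampling}.
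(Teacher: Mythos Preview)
Your proof uses exactly the same machinery as the paper's --- Theorem~\ref{thmfock}, Proposition~\ref{duality}, the Seip--Wallst\'en characterization, the universality of the rescaled Bergman kernel from \cite{BleShiZel00}, and Proposition~\ref{saltinterp} --- and the contradiction in your second step is the paper's contradiction. The gap is in the first step, which you correctly flag as the main obstacle but do not actually overcome. Your justification for the uniform bound $\sup_{\Sigma\in W(\Lambda)} D^+(\Sigma)<1$ rests on two claims, neither of which holds as stated. First, $D^+$ is not upper semicontinuous under weak convergence of uniformly separated sequences: a region of high density can drift to infinity and become invisible in the limit, so one can have $\Sigma_n\to\Sigma$ with $\limsup D^+(\Sigma_n)>D^+(\Sigma)$; translation-invariance of $W(\Lambda)$ lets one recenter, but turning that into an achieved supremum requires a further diagonal argument you have not supplied. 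Second, the appeal to Corollary~\ref{densities} is circular: that corollary yields only the non-strict inequality $D^+(\Lambda)\le 1/\pi$, and the strict inequality is precisely what the present proposition is meant to establish. Without the uniform gap you cannot choose $\varepsilon>0$ so that every dilated limit $\Sigma'=(1+\varepsilon)^{-1/2}\Sigma$ (with $\Sigma\in W(\Lambda)$) still satisfies $D^+(\Sigma')<1$, and then the synthesis estimate you invoke for $\Sigma'$ is unavailable and the contradiction does not close.

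The paper avoids this by a nested double limit. Instead of fixing $\varepsilon$, it assumes for contradiction that $\{\Lambda_{(1+1/n)k}\}$ fails $L^1$-interpolation for \emph{every} $n$; for each fixed $n$ it runs your second-step argument, extracting a weak limit $\Sigma_n$ of the points rescaled by $\sqrt{k}$ and a bounded coefficient sequence $\{a_\sigma^n\}$ with $|a_0^n|=1$ for which the synthesized function $f_n\equiv 0$. Only then does it let $n\to\infty$: the dilation factor $(1+1/n)^{-1/2}\to 1$, so the limit $\Sigma$ of the $\Sigma_n$ lies in $W(\Lambda)$ itself, hence is interpolating directly by Theorem~\ref{thmfock}, and the persistence of $f\equiv 0$ with $|a_0|=1$ contradicts the dual $L^1$-interpolation inequality in $\mathcal{BF}$. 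This second limit in $n$ is exactly the device that manufactures, a posteriori, the uniformity you asserted at the outset; your argument becomes correct if you replace the unsupported compactness claim by it.
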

\begin{proof}
 We know by Theorem~\ref{thmfock} than any weak limit $\Sigma\in W(\Lambda)$ is
an interpolating sequence in 
$\mathcal{BF}^2(\Cbb)$. Thus by the description of interpolating
sequences for such spaces obtained in \cite{Seip92}, the
upper Beurling
density $D^+(\Sigma)<1$. We will prove that under this circumstances there is a
$\varepsilon>0$ such that $\{\Lambda_{(1+2\varepsilon)k}\}$ is
$L^1$-interpolating.

We argue by contradiction. Suppose not, then, for any $n$ the interpolation
constants at level $k$, $C_k$ for $\Lambda_{(1+1/n)k}$ blow up. Thus by the
dual description of $C_k$ given in Proposition~\ref{duality} we can find
sequences of vectors $\{a_\lambda\}_{\lambda\in \Lambda_{(1+1/n)k}}$ such that
$\sup_{\Lambda_{(1+1/n)k}}|a_\lambda|=1$ and 
\[
\sup_{x\in X} k^{-1} \left | \sum_{\Lambda_{(1+1/n)k}} \dotprod{a_\lambda}
{\Pi_k(x, \lambda)}
\right|=o(1),\qquad \text{ as }k\to\infty.
\]
If we fix $n$ and by passing to a subsequence in normal coordinates around the
points $\lambda^*_k$ where $|a_\lambda|$ takes its maximum value, we  can
extract
a subsequence of $\Lambda_{(1+1/n)k}$ as $k\to \infty$ in normal coordinates
that scaled appropriately converges weakly to the separated sequence
$\Sigma_n\subset \Cbb$. Moreover, after taking a subsequence again, there are
subsequences $a_\lambda^k\to a_\sigma^n$ for all $\sigma\in \Sigma_n$. We are
going to prove that in this case 
\[
 f_n(z):=\sum_{\sigma\in \Sigma_n} a_\sigma^n e^{\bar \sigma z-1/2
|\sigma|^2}\equiv 0,
\]
with $|a_0|=1$, and $\sup_{\sigma} |a_\sigma^n|\le 1$.

To see this we will prove that for any $\varepsilon>0$, $\sup_{|z|<1}
|f_n(z)|e^{-|z|^2}\le\varepsilon$. 

Observe that since $\Sigma_n$ is separated and $|a_\sigma^n|\le 1$,
the decay of the Bargmann-Fock kernel away from the diagonal implies that for
any $\varepsilon>0$ it is possible to find $R>0$ such that 
\[
\sup_{|z|<1} \left |\sum_{\sigma^n\in \Sigma_n, |\sigma|>R} a_\sigma^n e^{\bar
\sigma z - \frac1{2} |\sigma|^2}\right|e^{- \frac1{2} |z|^2} \le \varepsilon
\]
So we only need to care about the points $\sigma\in \Sigma_n\cap D(0,R)$. But
this we can deal with because, with certain abuse of notation,
\[
k^{-1}
 \sum_{\lambda\in \Lambda_{(1+1/n)k}\cap D(\lambda^*_k, R/\sqrt{k})} 
 \dotprod{a_\lambda}{\Pi_k(x, \lambda)}
\to \frac1{\pi}\sum_{\sigma\in \Sigma_n, |\sigma|<R} a_\sigma^n e^{\bar
\sigma z - \frac1{2} |\sigma|^2 - \frac1{2} |z|^2} 
\]
uniformly in $|z|<1$ when the section is expressed in appropriately scaled
normalized coordinates around $\lambda_k^*$. This property is usually called
the universality of the reproducing kernels and it is proved in
\cite[Theorem 3.1]{BleShiZel00}. Actually in \cite{BleShiZel00} it is
assumed that $X$ is equipped with the metric induced by the curvature of
the line bundle, but since the condition \eqref{density} is invariant under
change of metric we may also assume that this is the case. The sum
\[
k^{-1}
\Bigl| \sum_{\lambda\in \Lambda_{(1+1/n)k}\cap D(\lambda^*_k, R/\sqrt{k})}
\dotprod{a_\lambda}{\Pi_k(x, \lambda)}
\Bigr|\le \varepsilon,
\]
if $k$ is big enough because the global sum for all $\lambda\in
\Lambda_{(1+1/n)k}$ converges to zero and the terms $\lambda$ outside the ball
$D(\lambda^*_k, R/\sqrt{k})$ are small when $R$ is big because
$\Lambda_{(1+1/n)k}$ is separated and there is a fast decay of the normalized
reproducing kernel away from the diagonal \eqref{EstimateOffDiagonal}.

Finally we have proved that $f_n\equiv 0$ and $\{a_\sigma^n\}$ is uniformly
bounded sequence with $a_0=1$. We can take a subsequence as $n\to\infty$ and we
find $\Sigma_n\to \Sigma$ weakly  and there is a bounded sequence
$\{a_\sigma\}$ such that $f(z)=\sum a_\sigma e^{\bar \sigma
z-|\sigma|^2/2}\equiv 0$ and $|a_0|=1$. This is clearly not possible since
$\Sigma\in W(\Lambda)$ and thus it has $D^+(\Lambda)<1$, thus $\Lambda$ is
interpolating for the $L^1$ Bargmann-Fock space and this means that by duality
\[
 \sup_\sigma |a_\sigma| \le C \sup_{z\in\Cbb} |\sum a_\sigma e^{\bar\sigma
z-|\sigma|^2/2}|e^{-|z|^2}.
\]
We have thus proved that $\{\Lambda_{(1+2\varepsilon)k}\}$ is
$L^1$-interpolation. We finish the proof by observing that
by Proposition~\ref{saltinterp} this implies that 
$\{\Lambda_{(1+\varepsilon)k}\}$ is $L^2$-interpolation.
\end{proof}

\def\cprime{$'$}
\providecommand{\bysame}{\leavevmode\hbox to3em{\hrulefill}\thinspace}
\providecommand{\MR}{\relax\ifhmode\unskip\space\fi MR }
% \MRhref is called by the amsart/book/proc definition of \MR.
\providecommand{\MRhref}[2]{%
  \href{http://www.ams.org/mathscinet-getitem?mr=#1}{#2}
}
\providecommand{\href}[2]{#2}


\begin{thebibliography}{BBWN11}

\bibitem[AOC12]{AmeOrt12}
Y.~Ameur and J.~Ortega-Cerd{\`a}, \emph{Beurling--{L}andau densities of
  weighted {F}ekete sets and correlation kernel estimates}, J. Funct. Anal.
  \textbf{263} (2012), no.~7, 1825--1861. \MR{2956927}

\bibitem[AFK11]{AscFeiKai11}
G.~Ascensi, H.~Feichtinger and N.~Kaiblinger, \emph{Dilation of the {W}eyl
  symbol and {B}alian-{L}ow theorem}, Trans. Amer. Math. Soc, to appear.

\bibitem[BBS08]{BerBerSjo08}
R.~Berman, B.~Berndtsson, and J.~Sj{\"o}strand, \emph{A direct
  approach to {B}ergman kernel asymptotics for positive line bundles}, Ark.
  Mat. \textbf{46} (2008), no.~2, 197--217. \MR{2430724 (2009k:58050)}  
  
\bibitem[BBWN11]{BerBouWit11}
R.~Berman, S.~Boucksom, and D.~Witt~Nystr{\"o}m, \emph{Fekete
  points and convergence towards equilibrium measures on complex manifolds},
  Acta Math. \textbf{207} (2011), no.~1, 1--27. \MR{2863909}

\bibitem[Ber03]{Berndtsson03}
B.~Berndtsson, \emph{Bergman kernels related to {H}ermitian line bundles over
  compact complex manifolds}, Explorations in complex and {R}iemannian
  geometry, Contemp. Math., vol. 332, Amer. Math. Soc., Providence, RI, 2003,
  pp.~1--17. \MR{2016088 (2004m:58045)}

\bibitem[Ber10]{Berndtsson10}
\bysame, \emph{An introduction to things {$\overline\partial$}}, Analytic and
  algebraic geometry, IAS/Park City Math. Ser., vol.~17, Amer. Math. Soc.,
  Providence, RI, 2010, pp.~7--76. \MR{2743815 (2012c:32052)}

\bibitem[BSZ00]{BleShiZel00}
P.~Bleher, B.~Shiffman, and S.~Zelditch, \emph{Universality and
  scaling of correlations between zeros on complex manifolds}, Invent. Math.
  \textbf{142} (2000), no.~2, 351--395. \MR{1794066 (2002f:32037)}  
  
\bibitem[Bl{\"u}90]{Blumlinger90}
M.~Bl{\"u}mlinger, \emph{Asymptotic distribution and weak convergence on
  compact {R}iemannian manifolds}, Monatsh. Math. \textbf{110} (1990), no.~3-4,
  177--188. \MR{MR1084310 (92h:58033)}

\bibitem[GM13]{GroMal11}
K.~Grochenig and E.~Malinnikova, \emph{Phase space localization of {R}iesz bases
  for {$L^2(\mathbb R^n)$}}, Rev. Mat. Ibero. \textbf{29} (2013), 115--134.

\bibitem[HM11]{HsiMar11}
C.Y.~Hsiao and G.~Marinescu, \emph{Asymptotics of spectral function of lower
  energy forms and {B}ergman kernel of semi-positive and big line bundles},
  arXiv:1112.5464v1, 2011.

\bibitem[Kod54]{Kodaira54}
K.~Kodaira, \emph{On {K}\"ahler varieties of restricted type (an intrinsic
  characterization of algebraic varieties)}, Ann. of Math. (2) \textbf{60}
  (1954), 28--48. \MR{0068871 (16,952b)}

\bibitem[Lan67]{Landau67}
H.~J. Landau, \emph{Necessary density conditions for sampling and interpolation
  of certain entire functions}, Acta Math. \textbf{117} (1967), 37--52.

\bibitem[Lin01]{Lindholm01}
N.~Lindholm, \emph{Sampling in weighted {$L\sp p$} spaces of entire
  functions in {${\mathbb C}\sp n$} and estimates of the {B}ergman kernel}, J.
  Funct. Anal. \textbf{182} (2001), no.~2, 390--426. \MR{2002g:32007}

\bibitem[MM07]{MaMa07}
X.~Ma and G.~Marinescu, \emph{Holomorphic {M}orse inequalities and
  {B}ergman kernels}, Progress in Mathematics, vol. 254, Birkh\"auser Verlag,
  Basel, 2007. \MR{2339952 (2008g:32030)}

\bibitem[Mar07]{Marzo07}
J.~Marzo,
\emph{Marcinkiewicz-Zygmund inequalities and interpolation by spherical harmonics},
J. Funct. Anal. \textbf{250} (2007), no.~2, 559--587.

\bibitem[MOC10]{MarOrt10}
J.~Marzo and J.~Ortega-Cerd{\`a}, \emph{Equidistribution of {F}ekete
  points on the sphere}, Constr. Approx. \textbf{32} (2010), no.~3, 513--521.
  \MR{2726443 (2011j:65057)}

\bibitem[Mo{\u\i}66]{Moishezon66}
B.G.~Mo{\u\i}{\v{s}}ezon, \emph{On {$n$}-dimensional compact complex manifolds
  having {$n$} algebraically independent meromorphic functions.
  {I},{II},{III}}, Izv. Akad. Nauk SSSR Ser. Mat. \textbf{30} (1966), 133--174.
  \MR{0216522 (35 \#7355a)}
  
\bibitem[NO12]{NitOle12}
S.~Nitzan, A.~Olevskii, \emph{Revisiting Landau’s density theorems for
Paley–Wiener spaces}, C. R. Acad. Sci. Paris, Ser. I \textbf{350} (2012),
509--512. \MR{2929058}

\bibitem[OCP12]{OrtPrid12}
J.~Ortega-Cerd{\`a} and B.~Pridhnani, \emph{Beurling--{L}andau's
  density on compact manifolds}, J. Funct. Anal. \textbf{263} (2012), no.~7,
  2102--2140. \MR{2956935}

\bibitem[RS13]{RotSer13}
S.~Rota Nodari and S.~Serfaty, \emph{Renormalized energy equidistribution and 
local charge balance in 2D Coulomb systems}, arXiv:1307.3363  
  
\bibitem[Sei92]{Seip92}
K.~Seip, \emph{Density theorems for sampling and interpolation in the
  {B}argmann-{F}ock space. {I}}, J. Reine Angew. Math. \textbf{429} (1992),
  91--106. \MR{93g:46026a}

\bibitem[SW92]{SeiWal92}
K.~Seip and R.~Wallst{\'e}n, \emph{Density theorems for sampling and
  interpolation in the {B}argmann-{F}ock space. {II}}, J. Reine Angew. Math.
  \textbf{429} (1992), 107--113. \MR{93g:46026b}
  
\bibitem[Siu84]{Siu84}
Y.T.~Siu, \emph{A vanishing theorem for semipositive line bundles over
  non-{K}\"ahler manifolds}, J. Differential Geom. \textbf{19} (1984), no.~2,
  431--452. \MR{755233 (86c:32029)}

\bibitem[Tia90]{Tian90}
G.~Tian, \emph{On a set of polarized {K}\"ahler metrics on algebraic
  manifolds}, J. Differential Geom. \textbf{32} (1990), no.~1, 99--130.
  \MR{1064867 (91j:32031)}

\bibitem[Vil09]{Villani09}
C.~Villani, \emph{Optimal transport, old and new},
Grundlehren der mathematischen Wissenschaften \textbf{338} (2009).

\bibitem[Zel98]{Zelditch98}
S.~Zelditch, \emph{Szeg{\H o} kernels and a theorem of {T}ian}, Internat.
  Math. Res. Notices (1998), no.~6, 317--331. \MR{1616718 (99g:32055)} 
  
\end{thebibliography}
\end{document}